\tikzset{
    >=stealth,
    every picture/.style={thick},
    graphs/every graph/.style={empty nodes},
}
\tikzstyle{vertex}=[
\newcommand{\Pic}{\operatorname{Pic}}
\newcommand{\reg}{\operatorname{reg}}
\newcommand{\pp}{\mathbb{P}}
\newcommand{\qq}{\mathbb{Q}}
\newcommand{\zz}{\mathbb{Z}}
\renewcommand\P{\mathbb{P}}
\newcommand{\sshf}[1]{\mathcal{O}_{#1}}
\newcommand{\shf}[1]{\mathscr{#1}}
\newcommand{\is}[1]{\mathscr{I}_{#1}}
\newcommand{\prj}[1]{\mathbb{P}^{#1}}
\newcommand{\ses}[3]{0\rightarrow#1\rightarrow#2\rightarrow#3\rightarrow{0}}
\newcommand{\Q}{\mathbb{Q}}
\def\P{\mathbb{P}}
\newtheorem{introthm}{Theorem}
\newtheorem{theorem}{Theorem}[section]
\newtheorem{lemma}[theorem]{Lemma}
\newtheorem{proposition}[theorem]{Proposition}
\newtheorem{corollary}[theorem]{Corollary}
\theoremstyle{definition}
\newtheorem{notation}[theorem]{Notation}
\newtheorem{definition}[theorem]{Definition}
\newtheorem{remark}[theorem]{Remark}
\theoremstyle{remark}
\numberwithin{equation}{section}
\renewcommand{\shf}[1]{\mathscr{#1}}
\renewcommand{\prj}[1]{\mathbb{P}^{#1}}
\newcommand{\iso}{\simeq}
\renewcommand{\ses}[3]{0\rightarrow#1\rightarrow#2\rightarrow#3\rightarrow{0}}
\renewcommand{\is}[1]{\mathscr{I}_{#1}}
\newcommand{\paren}[1]{\left(#1\right)}
\begin{document}

\title[Regularity of structure sheaves
of varieties with isolated singularities]
{Regularity of structure sheaves\\
of varieties with isolated singularities}

\author[J.~Moraga]{Joaqu\'in Moraga}
\address{
Department of Mathematics, University of Utah, 155 S 1400 E,
Salt Lake City, UT 84112, USA}
\email{moraga@math.utah.edu}

\author[J.~Park]{Jinhyung Park}
\address{
School of Mathematics, Korea Institute for Advanced Study, Seoul 02455, Republic of Korea}
\email{parkjh13@kias.re.kr}

\author[L.~Song]{Lei Song}
\address{
School of Mathematics, Sun Yat-sen University, Guangzhou, Guangdong 510275, P.R. China}
\email{songlei3@mail.sysu.edu.cn}

\subjclass[2010]{Primary 14N05, 14E15, 14J17.}

\maketitle

\begin{abstract}
Let $X\subseteq \pp^N$ be a non-degenerate normal projective variety of codimension $e$ and degree $d$ with isolated $\qq$-Gorenstein singularities. We prove that the Castelnuovo-Mumford regularity ${\rm reg}(\sshf{X})\le d-e$, as predicted by the Eisenbud-Goto regularity conjecture. Such a bound fails for general projective varieties. The main techniques are Noma's classification of non-degenerated projective varieties and Nadel vanishing for multiplier ideals. We also classify the extremal and the next to extremal cases.
\end{abstract}

\setcounter{tocdepth}{1}
\tableofcontents

\section*{Introduction}

Let $X \subseteq \mathbb{P}^N$ be a projective scheme. By Serre, there exists an integer $m_0=m_0(X)$ for the ideal sheaf $\is{X}$ such that if $k \geq m_0$, then $\is{X}(k)$ is globally generated and $H^i(\P^N, \is{X}(k))=0$ for $i>0$. An effective value of $m_0$ is given by the Castelnuovo-Mumford regularity $\reg(\is{X})$ of the ideal sheaf $\is{X}$, which is defined as the smallest integer $k$ such that $H^i(\P^N, \is{X}(k-i))=0$ for all $i > 0$. There has been a great deal of effort over the decades to look for bounds for $\reg(\is{X})$ in terms of geometric data of the embedding such as the codimension $e$ and the degree $d$.

For arbitrary schemes, it was observed by Bayer-Stillman that Mayr-Meyer's construction \cite{MM} can lead to double exponential growth of $\reg(\is{X})$ in $d$. On the other hand, it is tempting  to speculate, under the name of Eisenbud-Goto regularity conjecture (cf.~\cites{GLP83,EG84}), that when $X$ is a non-degenerate projective variety, $\reg(\is{X})$ is subject to the Castelnuovo bound
\[
{\rm reg}(\is{X}) \leq d-e+1.
\]
Such a bound has been established in many situations, notably for low dimensional, smooth varieties or varieties with mild singularities (see e.g.~\cite{GLP83,Pin86,La87,Ra90,Kwak98,Niu15} and the references therein). However a recent work ~\cite{MP18} of McCullough-Peeva shows that the Castelnuovo bound is too optimistic even for projective varieties in general; in fact, it is  shown in \cite{MP18} that $\reg(\is{X})$ can grow exponentially in $d$ and hence cannot be bounded by any polynomial function of $d$.

The``\emph{mysterious dichotomy}" was raised by Bayer-Mumford and Lazarsfeld \\
\indent ${\tiny\bullet}$ for smooth projective variety $X$, the regularity $\reg(\is{X})$ is expected to be subject to a linear bound in terms of $d$ (see \cite{BM}),\\
\indent ${\tiny\bullet}$ for arbitrary projective variety $X$, the regularity $\reg(\is{X})$ can be extremely large.\\
In view of the dichotomy, it seems worthwhile to understand what mild singularities class separates projective varieties satisfying the Eisenbud-Goto conjecture from the others.

Note that the conjecture is equivalent to the surjectivity of the restriction map
\begin{equation}\label{k normality}
H^0(\mathbb{P}^N, \mathcal{O}_{\pp^N}(k)) \rightarrow H^0(X,\mathcal{O}_{X}(k)),
\end{equation}
for each $k\ge d-e$ and the bound for structure sheaf
\begin{equation}\label{O regularity}
{\rm reg}(\sshf{X})\le d-e.
\end{equation}

Concerning the regularity of $\sshf{X}$, Noma ~\cite{Noma14} established (\ref{O regularity}) for almost all non-degenerated  smooth projective varieties of arbitrary dimension, except for varieties that are projectively equivalent to a scroll over a smooth projective curve. The scroll case was proved in~\cite{KP14} among other things.

It is natural to ask for what singular varieties, ${\rm reg}(\sshf{X})$ still satisfy the linear bound (\ref{O regularity})? Here it is important to note  that (\ref{O regularity}) fail in general. In ~\cite[Example 4.6]{MP18}, a projective variety $X$ is constructed which violates the Eisendbud-Goto conjecture and whose affine cone at the vertex has depth 7. By local cohomology and the depth condition, one can see that the restriction map (\ref{k normality}) is surjective for all $k\ge 0$. Therefore (\ref{O regularity}) must fail in the example. This suggests that in order to get a bound for ${\rm reg}(\sshf{X})$ like (\ref{O regularity}), certain singularities conditions on $X$ have to be imposed.

Our first main result is
\begin{introthm}\label{maintheorem}
Let $X\subseteq \mathbb{P}^N$ be a non-degenerate normal projective variety of codimension $e$ and degree $d$ with isolated $\qq$-Gorenstein singularities. Then $\mathcal{O}_X$ is $(d-e)$-regular.
\end{introthm}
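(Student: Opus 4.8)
The plan is to recast the statement as the family of vanishings
$$H^i(\mathbb{P}^N,\sshf{X}(d-e-i))=H^i(X,\sshf{X}(d-e-i))=0,\qquad 1\le i\le n:=\dim X,$$
where $H:=\sshf{X}(1)$ denotes the (ample) hyperplane class, and to prove these by combining three ingredients: a multiplier-ideal reduction that renders the isolated singularities cohomologically invisible, Nadel vanishing, and the positivity of the divisors produced by generic inner projection, which is where Noma's classification enters. The main obstacle, as explained below, is the last ingredient.

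First I would dispose of the singularities. Since $X$ is normal and $\qq$-Gorenstein, I fix a log resolution $f\colon Y\to X$, write $K_Y=f^*K_X+K_{Y/X}$, and form the multiplier ideal $\mathcal{J}(X)=f_*\sshf{Y}(\lceil K_{Y/X}\rceil)\subseteq\sshf{X}$. On the smooth locus the multiplier ideal is trivial, so $\sshf{X}/\mathcal{J}(X)$ is supported on the finite set $\operatorname{Sing}(X)$ and in particular has vanishing higher cohomology after any twist. The long exact sequence attached to $0\to\mathcal{J}(X)(m)\to\sshf{X}(m)\to(\sshf{X}/\mathcal{J}(X))(m)\to 0$ then shows that $H^i(X,\mathcal{J}(X)\otimes\sshf{X}(m))\twoheadrightarrow H^i(X,\sshf{X}(m))$ for every $i\ge 1$. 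Hence it is enough to prove $H^i\bigl(X,\mathcal{J}(X)\otimes\sshf{X}(d-e-i)\bigr)=0$ for $1\le i\le n$.

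Next I would invoke Nadel vanishing for the $\qq$-Gorenstein variety $X$: for any integer $m$ one has $H^i(X,\mathcal{J}(X)\otimes\sshf{X}(mH))=0$ for all $i\ge 1$ provided the $\qq$-Cartier $\qq$-divisor $mH-K_X$ is nef and big. Applying this with $m=d-e-i$ reduces the entire theorem to a single positivity statement, namely that $(d-e-n)H-K_X$ is nef and big. Indeed, for $1\le i\le n$ we may write $(d-e-i)H-K_X=\bigl((d-e-n)H-K_X\bigr)+(n-i)H$, the sum of a (conjecturally) nef and big class with a nef class, hence again nef and big; so it suffices to treat the extremal index $i=n$.

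The heart of the argument, and the step I expect to be the main obstacle, is to establish that $(d-e-n)H-K_X$ is nef and big. This is exactly the positivity furnished by the study of generic inner projections: projecting $X$ from a general smooth point yields a variety of degree $d-1$ and codimension $e-1$, so that the quantity $d-e$, and hence the target class, is preserved, and the analysis of the resulting birational projection controls $K_X$. Here I would appeal to Noma's classification of non-degenerate projective varieties, running an induction on the codimension through successive inner projections, to obtain base-point-freeness (hence nefness) and bigness of $(d-e-n)H-K_X$ for all $X$ outside an explicit exceptional list; the excluded varieties—essentially scrolls over a smooth curve and a few varieties of minimal degree, where the relevant linear system defines a fibration and fails to be big—must be handled directly, as in \cite{KP14}. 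The genuinely new point relative to the smooth theory is that the projection geometry and the double point divisor construction have to be carried out in the presence of the isolated $\qq$-Gorenstein singularities: because $\operatorname{Sing}(X)$ is finite a general projection centre avoids it and Noma's line-geometry arguments apply verbatim on the smooth locus, while the $\qq$-Gorenstein hypothesis is precisely what keeps $K_X$ $\qq$-Cartier, so that the class $(d-e-n)H-K_X$ and the multiplier-ideal bookkeeping of the previous steps remain meaningful throughout the induction.
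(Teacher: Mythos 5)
Your reduction steps are sound, and they essentially mirror the paper's own mechanism: Proposition~\ref{p5} likewise combines Nadel vanishing with the fact that the relevant multiplier ideal is cosupported in the finite set $X_{\text{Sing}}$, and your requirement that $(d-e-n)H-K_X=D_{\text{inn}}+H$ be nef and big is, granting Theorem~\ref{bpf}, equivalent to the positivity of the inner double-point divisor $D_{\text{inn}}$ that the paper works to establish. The genuine gap is the step you delegate to ``Noma's classification'': the claim that $D_{\text{inn}}$ (equivalently, your class) is nef and big outside an exceptional list consisting of scrolls over curves and a few varieties of minimal degree, with the singular case following ``verbatim'' because a general projection centre avoids the finitely many singular points. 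Both halves of this claim fail. First, the exceptional list is wrong: it omits case (3.1) of Corollary~\ref{nomasclassification}, the birational-type divisors on conical rational scrolls, i.e.\ varieties for which some positive-dimensional component of $\overline{\mathcal{C}}(X)$ has \emph{non-constant} partial Gauss map. These are neither scrolls nor of minimal degree, and no nefness of $D_{\text{inn}}$ is established for them anywhere: both Noma and the paper (Proposition~\ref{p4}) treat this case by explicit cohomology computations on the ambient scroll, not through positivity of $D_{\text{inn}}$, so your induction has nothing to run on there.

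Second, and more seriously, in the remaining case (3.2), where some component $L$ of $\overline{\mathcal{C}}(X)$ is positive-dimensional with \emph{constant} partial Gauss map, Noma's smooth-variety arguments cannot ``apply verbatim on the smooth locus'': by Zak's theorem the Gauss map of a smooth variety is finite, so this configuration never occurs for smooth $X$; it is a purely singular phenomenon, and it is exactly where the content of the theorem lies. Theorem~\ref{bpf} only places ${\rm Bs}|D_{\text{inn}}|$ inside $\overline{\mathcal{C}}(X)\cup X_{\text{Sing}}$; to conclude nefness (or the paper's semiampleness via Zariski--Fujita, Proposition~\ref{semiampleness}) one must still bound $D_{\text{inn}}\cdot L$ from below for every line $L\subseteq\overline{\mathcal{C}}(X)$. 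That is Proposition~\ref{positive intersection}, whose proof is not line geometry at all: it passes to a general surface section which is in general \emph{not normal}, uses the $\qq$-Gorenstein hypothesis in an essential way to make sense of $K_{\widetilde{S}}-f^*K_S$ and of the intersection numbers $a_i$, $C_i^2$, $K_S\cdot L$ on that non-normal surface, and closes the required inequality with Castelnuovo's genus bound. Your proposal contains no substitute for this computation; it is a correct reduction resting on an unproven --- and, as stated, false --- positivity claim at precisely the point where the proof is hard.
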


Theorem \ref{maintheorem} follows from a number of more precise vanishing results, which are established in \S 3 and \S 4 (see Theorem \ref{vanishingtheorem}, Propositions \ref{p1}, \ref{p2}, \ref{p4} and \ref{p5}).

One ingredient in this article is Noma's classification for non-degenerate projective varieties in terms of Condition $(E_m)$ and the partial Gauss map on the set $\overline{\mathcal{C}}(X)$ of non-birational centers in $X$ (cf.~\S 1.1). The other ingredient is Nadel vanishing for multiplier ideals (cf.~\S 1.2). We shall explain how to put them together below.

Specifically, according to Noma, a projective variety $X\subseteq \prj{N}$ of codimension $e\ge 2$ (the hypersurface case is trivial) is projectively equivalent to one of the following cases:
\begin{itemize}
\item[(i)] A scroll over a smooth projective curve;
\item[(ii)] A cone over the Veronese surface;
\item[(iii)] A birational type divisor of a conical rational scroll;
\item[(iv)] A birational type divisor of a conical scroll.
\end{itemize}
We prove Theorem \ref{maintheorem} for Case (i)-(iii) by somewhat explicit calculations, either on certain natural resolution of singularities or a prime divisor on a conical rational scroll. Our main novelty lies in Case (iv), when the classification information is inadequate for the regularity bound. Under the assumptions, one extremal scenario in this case is when $\dim \overline{\mathcal{C}}(X)=1$ and the Gauss map is constant on each component. To apply Nadel vanishing for multiplier ideals associated to linear systems, we shall first understand the stable base locus of $|D_{\text{inn}}|$ for the double-point divisor $D_{\text{inn}}$, which arises from generic projections. To this end, we extend a Noma's result to normal varieties (Theorem \ref{bpf}), which implies that the stable base locus is indeed contained in the union of $\overline{\mathcal{C}}(X)$ and the singular locus $X_{\text{Sing}}$. Then we shall establish the semiampleness of $D_{\text{inn}}$ in our situation (Proposition \ref{semiampleness}). Thanks to the isolatedness of singularities and the linearity of $\overline{\mathcal{C}}(X)$, by Zariski-Fujita theorem, it suffices to show that $L\cdot D_{\text{inn}}>0$ for any line $L\subseteq \overline{\mathcal{C}}(X)$. We reduce the calculation of the intersection number to that on a generic surface section, with one subtle difficulty: the involved surfaces are not necessarily normal. We overcome such difficulty via an essential use of $\mathbb{Q}$-Gorenstein condition.

We remark that our method also works for smooth varieties except when $X$ is projectively equivalent to a smooth scroll over a smooth projective curve. We also note that a similar result bounding the regularity of $\sshf{X}$ in dimension 2 and 3 has been obtained in \cite[Proposition 2.3]{NP17}, relying on the Eisenbud-Goto conjecture for curves and surfaces.

Following the approach of \cite{KP14}, we can characterize the extremal and the next to extremal cases:
\begin{introthm}\label{extremalcasetheorem}
Let $X\subseteq \mathbb{P}^N$ be a non-degenerate normal projective variety of codimension $e$ and degree $d$ with isolated $\qq$-Gorenstein singularities.
\begin{itemize}
 \item[$(1)$] $\reg(\mathcal{O}_X) = d-e$ if and only if either $X \subseteq \mathbb{P}^N$ is a hypersurface or a linearly normal projective variety with $d \leq e+2$.
 \item[$(2)$] $\reg(\mathcal{O}_X)=d-e-1$ if and only if either $X \subseteq \mathbb{P}^N$ is an isomorphic projection of a projective variety as in case $(1)$ at one point, a linearly normal variety with $d=e+3$ and $e \geq 2$, or a complete intersection of type $(2,3)$.
\end{itemize}
\end{introthm}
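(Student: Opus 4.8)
The starting point is that $\reg(\sshf{X})$ is an invariant of the polarized pair $(X,\sshf{X}(1))$: the groups $H^i(\P^N,\sshf{X}(j))=H^i(X,\sshf{X}(j))$ depend only on $X$ and its hyperplane bundle, so an isomorphic linear projection $X\subseteq\P^N\dashrightarrow X'\subseteq\P^{N-1}$ leaves them, and hence $\reg(\sshf{X'})=\reg(\sshf{X})$, unchanged while lowering the codimension by one and fixing the degree. Thus each such projection raises the yardstick $d-e$ by exactly one without moving the regularity. Writing $\delta:=h^1(\is{X}(1))$ and $\Delta:=d+\dim X-h^0(\sshf{X}(1))$, non-degeneracy gives $h^0(\sshf{X}(1))=N+1+\delta$ and therefore $d-e=\Delta+1+\delta$. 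Consequently, for a fixed abstract polarized variety, $\reg(\sshf{X})=d-e$ singles out the linearly normal embedding ($\delta=0$) and $\reg(\sshf{X})=d-e-1$ the next one obtained by a single isomorphic projection ($\delta=1$). This already yields the ``isomorphic projection of a case $(1)$ variety'' family in part $(2)$ and reduces both parts to the classification of linearly normal $X$.

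By Theorem \ref{maintheorem} we have $\reg(\sshf{X})\le d-e$, and the precise vanishing of \S 3--\S 4 (Theorem \ref{vanishingtheorem} and Propositions \ref{p1}, \ref{p2}, \ref{p4}, \ref{p5}) shows that the intermediate cohomology $H^i(\sshf{X}(j))$ for $1<i<n:=\dim X$ vanishes in the relevant range and that, at the level of extremality, the governing contribution comes from the top degree $i=n$. There Serre--Grothendieck duality gives $H^n(\sshf{X}(j))^\vee\cong H^0(X,\omega_X(-j))$ with no Cohen--Macaulay hypothesis, reducing the analysis to the threshold $\sigma:=\min\{s:H^0(X,\omega_X(s))\neq 0\}$; the main theorem forces $\sigma\ge N+1-d$, with equality $\sigma=N+1-d$ being exactly the extremal condition $\reg(\sshf{X})=d-e$. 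To evaluate this threshold I would pass to a general curve section $C=X\cap\Lambda$, a non-degenerate curve of the same degree $d$ and codimension $e$, where adjunction identifies $\omega_C$ with $(\omega_X\otimes\sshf{X}(n-1))|_C$ and the threshold is read off from $\omega_C\otimes\sshf{C}(s)$ by Riemann--Roch. The classical analysis of curves shows $\sigma$ attains the extremal value precisely for rational and elliptic normal curves; lifting through Fujita's adjunction-theoretic classification, these correspond on $X$ to varieties of minimal degree ($\Delta=0$, $d=e+1$) and del Pezzo type ($\Delta=1$, $d=e+2$). Together with the hypersurface case $e=1$, which is always extremal by the direct computation on $0\to\sshf{\P^N}(-d)\to\sshf{\P^N}\to\sshf{X}\to 0$ giving $\reg(\sshf{X})=d-1$, this establishes part $(1)$; here the isolated $\qq$-Gorenstein hypothesis is what makes $\omega_X$ a well-behaved reflexive $\qq$-line bundle so that adjunction and the classification remain valid.

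For part $(2)$ I would push the same computation one unit further, to $\sigma=N+2-d$. On a general curve section this is realized in two ways: by genus-$2$ curves, which lift to the linearly normal $\Delta=2$ varieties ($d=e+3$, necessarily with $e\ge 2$ since a genus-$2$ curve is never planar, the value $e=1$ being instead the extremal quartic hypersurface); and by the \emph{special} canonically embedded genus-$4$ curve, where $\sshf{C}(1)=\omega_C$ pushes the threshold below $0$, which lifts to the complete intersection of type $(2,3)$. Combined with the single-projection family of the first paragraph, these exhaust the next-to-extremal list, and the substance of the step is completeness: one must show, case by case in Noma's classification and using the sharp form of the \S 3--\S 4 vanishing, that no other configuration attains $\reg(\sshf{X})=d-e-1$. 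The overall architecture follows the smooth case of \cite{KP14}.

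The main obstacle, as in Theorem \ref{maintheorem}, is the genuinely singular Case (iv), a birational-type divisor of a conical scroll, where the classification does not by itself pin down the critical cohomology. Here I would combine Theorem \ref{bpf}, confining the stable base locus of the double-point divisor $D_{\mathrm{inn}}$ to $\overline{\mathcal{C}}(X)\cup X_{\mathrm{Sing}}$, with the semiampleness of $D_{\mathrm{inn}}$ (Proposition \ref{semiampleness}) and Nadel vanishing, using the $\qq$-Gorenstein condition to run the duality and adjunction arguments across the non-normal generic surface sections that arise, so as to exclude any unexpected extremal or next-to-extremal behaviour. A secondary difficulty is to make Fujita's classification and the adjunction estimates work in the presence of isolated $\qq$-Gorenstein singularities; this is exactly where the hypotheses are used, and where one checks that no new singular extremal families appear, while singular members such as cones over the Veronese surface---which are of minimal degree---remain correctly subsumed under ``linearly normal with $d\le e+2$''.
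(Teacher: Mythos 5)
Your first paragraph and your treatment of part (1) run parallel to the paper's own argument: the paper likewise observes that a non-linearly-normal $X$ is an isomorphic projection and hence has $\reg(\mathcal{O}_X)\le d-e-1$, uses Theorem \ref{vanishingtheorem} to push the critical nonvanishing into top degree $i=n$, and descends through general hyperplane sections to $H^1(C,\mathcal{O}_C(d-e-2))\neq 0$, where the curve-level bound (quoted from \cite[Theorem B]{KP14}, ultimately Castelnuovo's genus bound) yields $d\le e+2$. Your Serre-duality/threshold formulation is an equivalent packaging of that descent, though the detour through Fujita's $\Delta$-classification is unnecessary (the conclusion is purely numerical) and you leave the ``if'' directions unproved; these are short but not content-free, e.g.\ one must argue that $\reg(\mathcal{O}_X)=1$ together with linear normality forces a $2$-regular curve section, hence a rational normal curve and $d=e+1$.

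The genuine gap is in part (2), exactly where the paper spends most of its effort: the case $n=2$, so $X=S$ is itself a singular, linearly normal surface with $H^2(S,\mathcal{O}_S(d-e-4))=0$ but $H^1(S,\mathcal{O}_S(d-e-3))\neq 0$, and $S$ falls in case (3.2) of Corollary \ref{nomasclassification}. Your proposed machinery --- Theorem \ref{bpf}, Proposition \ref{semiampleness}, and Nadel vanishing --- reproduces exactly Proposition \ref{p5}, namely $H^1(S,\mathcal{O}_S(k))=0$ for $k\ge d-e-2$; it is silent about the twist $k=d-e-3$, which is the one at issue, so no ``sharp form of the \S 3--\S 4 vanishing'' can close this case. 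Nor can your curve-section duality detect the problem: given $H^1(S,\mathcal{O}_S(d-e-2))=0$, the nonvanishing of $H^1(S,\mathcal{O}_S(d-e-3))$ translates into failure of $(d-e-2)$-normality of $C\subseteq\P^{1+e}$, a statement about the embedding of $C$ rather than its intrinsic cohomology, so Riemann--Roch on $C$ says nothing. What the paper actually does to exclude $d\ge e+4$ is: rule out $g\le 1$ by a Kodaira-vanishing count; for $e\ge 3$ invoke Noma's $k$-normality theorem for curves \cite{Noma02}; and for $e=2$ use D'Almeida's multisecant theorem \cite{D86} to produce a $(d-2)$-secant line, conclude $C$ is hyperelliptic and hence $d\ge g+3$, apply Andreatta--Sommese \cite{AS89} on the minimal resolution to get nefness, and then prove via \cite[Theorem 2.2.15]{Laz04a} that $D_{\mathrm{inn}}=-K_S+(d-5)H$ is \emph{big}; only then does Kawamata--Viehweg (semiample and big) kill $H^1(S,\mathcal{O}_S(d-5))$ and give the contradiction. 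The missing idea in your proposal is precisely this bigness of the double-point divisor: semiampleness plus Nadel vanishing alone can never reach one twist below the range of Theorem \ref{vanishingtheorem}.
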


The article is organized as follows:
In \S ~\ref{NomaClass}, we recall Noma's classification of projective varieties in terms of exceptional divisors of generic inner projections
and partial Gauss maps. In \S~\ref{kltsing}, we briefly recall the definition of multiplier ideals and state a version of Nadel vanishing theorem. In \S~\ref{innermild}, we prove some properties of inner projections for varieties with mild singularities.
In \S~\ref{proof}, we begin the proof for Theorem~\ref{vanishingtheorem} case by case; and finish the proof in \S~\ref{cGauss}, where we deal with the case when $\overline{\mathcal{C}}(X)$ has positive dimension and the partial Gauss map is constant on each component. Finally in \S~\ref{extremalcases}, we give a classification for varieties with the maximal or the next to maximal $\mathcal{O}_X$-regularity.

\subsection*{Acknowledgements}
The authors would like to thank Wenbo Niu for his comments on a draft of the paper. The third author was partially supported by NSFC 11501470, and he is very grateful to Ziv Ran for correspondences on Gauss maps.

\section{Preliminaries}

We work over an algebraically closed field $k$ of characteristic zero.

\subsection{Noma's classification of projective varieties}\label{NomaClass}

In this subsection, we recall the classification of non-degenerate projective varieties introduced by Noma in ~\cite{Noma14,Noma18}. We refer the interested reader to ibid. for more details.

\begin{definition}\label{nbc}
Given a projective variety $X\subseteq \mathbb{P}^N$, the set $\mathcal{C}(X)$ is defined as
\[
\{ u\in {\rm Sm}(X) \mid {\rm length}( X\cap \langle u,x\rangle )\geq 3 \text{ for general $x\in X$ }\}.
\]
any point $x\in \mathcal{C}(X)$ is called a non-birational center of $X$. Define $\overline{\mathcal{C}}(X)$ as the Zariski closure of $\mathcal{C}(X)$ in $\prj{N}$.
\end{definition}

It's a fundamental result, due to \cite{Segre36}, that if $X$ is non-degenerate, then $\overline{\mathcal{C}}(X)$ is a union of finitely many linear subspaces of $\pp^N$.\footnote{Interestingly, the linearility of $\overline{\mathcal{C}}(X)$ fails in positive characteristic by Furukawa \cite{Furukawa11}.}

\begin{definition}
Let $X\subseteq \mathbb{P}^N$ be a non-degenerate variety of dimension $n$ and codimension $e\geq 2$. Let $1\leq m \leq e-1$ be a positive integer.
For general points $x_1,\dots, x_m \in X$, we define $E_{x_1,\dots, x_m}(X)$ to be the Zariski closure of
\[
\{ z \in X \setminus \{ x_1,\dots, x_m\} \mid \dim\langle x_1,\dots, x_m, z\rangle \cap X \geq 1 \}
\]
in X, which means that $E_{x_1,\dots, x_m}(X)$ is the closure of the set of positive-dimensional fibers
of the linear projection
\[
\pi_{\Lambda,X} \colon X \setminus \Lambda \rightarrow \mathbb{P}^{N-m}
\]
from $\Lambda = \langle x_1,\dots, x_m\rangle$.
We say that $X$ {\em satisfies Condition $(E_m)$} if
\[
\dim( E_{x_1,\dots,x_m}(X)) \geq n-1
\]
for general points $x_1,\dots, x_m$ in $X$; in other words, if the exceptional locus of the general inner projection from a general $(m-1)$-dimensional linear subspace
contains an exceptional divisor.

It directly follows from the definition that if $X$ satisfies Condition $(E_k)$ for some $1\leq k\leq e-2$, then it satisfies Condition $(E_{k+1})$.
If $X$ satisfies Condition $(E_m)$, we denote by $D_{x_1,\dots,x_m}(X)$ the union of the irreducible components of $E_{x_1,\dots,x_m}(X)$ of dimension $n-1$.
\end{definition}

We recall
\begin{definition}\label{scrollcurvenot}
Let $C$ be a smooth projective curve and $\mathcal{E}$ a vector bundle on $C$. We denote by $\mathbb{E}^C_{\mathcal{E}}=\mathbb{P}(\mathcal{E})$ the projectivization of the vector bundle on $C$ and by $p \colon \mathbb{E}^C_{\mathcal{E}} \rightarrow C$ the canonical projection.
Let $\mu: \mathbb{E}^{C}_{\mathcal{E}} \rightarrow \prj{N}$ be a proper morphism induced by a subsystem of $|\sshf{\mathbb{E}^C_\mathcal{E}}(1)|$ such that $\mu$ induces a birational morphism between $\mathbb{E}^{C}_{\mathcal{E}}$ and its image $X\subseteq \prj{N}$. Then $X$ is called {\em a scroll over the smooth projective curve $C$}.
\end{definition}

\begin{theorem}[{\cite[Theorem 3]{Noma14}}]\label{classification by E_i}
Let $X\subseteq \pp^N$ be a non-degenerate projective variety of dimension $n\geq 2$ and codimension $e\ge 2$. Then one of the following holds:
\begin{itemize}
\item Assume that $X$ satisfies Condition $(E_1)$. Then for a general point $x\in X$ the subvariety $D_x(X)$ is an $(n-1)$-dimensional linear space passing through $x$. Consequently, $X$ is projective equivalent to a scroll over a curve.
\item Assume that $e\ge 3$ and $X$ satisfies Condition $(E_2)$ but not Condition $(E_1)$. Then $e=3$ and $X\subseteq\prj{n+3}$ is projectively equivalent to the cone over the Veronese surface $v_2(\prj{2})\subseteq\prj{5}$ with an $(n-3)$-dimensional vertex. In particular, $X$ is smooth if and only if $n=2$.
\item Assume that $e\geq 4$. Then $X$ satisfies Condition $(E_m)$ for some $3\le m\le e-1$ if and only if it satisfies Condition $(E_1)$.
\end{itemize}
\end{theorem}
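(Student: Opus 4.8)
The plan is to convert the purely numerical Conditions $(E_m)$ into rigid projective-geometric structure and then read off the trichotomy. The starting point is a transparent description of the exceptional loci. For $m=1$ a fibre of $\pi_{x,X}\colon X\setminus\{x\}\to\prj{N-1}$ is positive-dimensional exactly when the line of $\prj{N}$ joining $x$ to the corresponding point is contained in $X$; hence $E_x(X)$ is the union of all lines of $X$ through $x$, a cone with vertex $x$. Since every line of $X$ through a smooth point $x$ has its tangent direction in $T_xX$ and therefore lies in the embedded tangent space $\mathbb{T}_xX\cong\prj{n}$, we get $E_x(X)\subseteq \mathbb{T}_xX\cap X$. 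Thus under Condition $(E_1)$ the component $D_x(X)$ is a hypersurface inside the $n$-plane $\mathbb{T}_xX$, and, being a cone with vertex $x$, it is the cone over some base $B_x\subseteq\prj{n-1}$ of line-directions, with $\deg D_x(X)=\deg B_x$. The first bullet is then equivalent to $\deg B_x=1$, i.e.\ to $B_x$ being a hyperplane. For $m\ge 2$ I would instead analyse the $m$-planes $\langle x_1,\dots,x_m,z\rangle$ meeting $X$ in positive dimension.

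The central step, and the one I expect to be the main obstacle, is the linearity of $D_x(X)$, equivalently $\deg B_x=1$. The idea is to bound its degree in $\mathbb{T}_xX$ by a contact-and-incidence argument. Introduce the incidence variety of pairs $(u,L)$ with $L$ a line of $X$ through $u$, project to the $X$-factor, and exploit that, for general $x$, the lines sweeping $D_x(X)$ meet $X$ to high order at $x$; controlling how a general line $L\subseteq D_x(X)$ meets the exceptional divisor $D_{x'}(X)$ of a second general point $x'$ then forces $\deg B_x=1$. The crucial inputs are reflexivity of $X$ and the generic smoothness available in characteristic zero — indeed, as noted for $\overline{\mathcal{C}}(X)$, the analogous linearity genuinely fails in positive characteristic, which signals that characteristic-zero techniques must enter precisely here.

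Granting that $D_x(X)$ is an $(n-1)$-plane, I would assemble the scroll as follows. As $x$ ranges over $X$, the planes $D_x(X)$ form a family of $(n-1)$-dimensional linear subspaces sweeping out the $n$-fold $X$; a dimension count forces the parameter space to be a curve, and non-degeneracy of $X$ forces the generic such plane to move, so the planes are the fibres of a dominant rational map from $X$ to a curve. Normalising the base and resolving, one obtains a smooth projective curve $C$ and a $\prj{n-1}$-bundle $\mathbb{P}(\mathcal{E})\to C$ together with a birational morphism to $X$ defined by a subsystem of $|\sshf{\mathbb{P}(\mathcal{E})}(1)|$; that is, the scroll structure of Definition \ref{scrollcurvenot}. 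This yields the first bullet, including the assertion that $D_x(X)$ is an $(n-1)$-plane through $x$.

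Finally I would treat the remaining two bullets together by induction on the codimension $e$ via inner projection. A general inner projection from one point, $\pi_{x_0,X}\colon X\setminus\{x_0\}\to X'\subseteq\prj{N-1}$, is birational onto a non-degenerate $n$-fold $X'$ of codimension $e-1$ (the non-birational case, where a general point lies in $\overline{\mathcal{C}}(X)$, must be isolated and handled separately), and projecting $X$ from $\langle x_1,\dots,x_m\rangle$ coincides with projecting $X'$ from the image of $\langle x_2,\dots,x_m\rangle$; hence Condition $(E_m)$ on $X$ descends to Condition $(E_{m-1})$ on $X'$. Tracking the minimal index $m_0$ for which $(E_{m_0})$ holds, the descent reduces any putative case $m_0\ge 3$ to the classification on $X'$, where the only possibility with minimal index $\ge 2$ is the Veronese case, leaving a single borderline ($e=4$, $m_0=3$, $X'$ the cone over $v_2(\prj{2})$) to be excluded by a direct check. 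For the base of the induction, when $(E_2)$ holds but $(E_1)$ fails, the absence of a divisor of lines through a general point combined with the presence of a divisor of conics — the positive-dimensional plane sections $\langle x_1,x_2,z\rangle\cap X$ — pins the structure to a quadratic one and identifies $X$ with the cone over $v_2(\prj{2})\subseteq\prj{5}$, forcing $e=3$. The main difficulties in this last stage are keeping $X'$ non-degenerate with its divisorial structure intact under projection, and carrying out the direct identification of the Veronese cone from the conic-section data.
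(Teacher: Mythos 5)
First, a point of order: Theorem~\ref{classification by E_i} is quoted verbatim from \cite[Theorem 3]{Noma14}; the paper states it as a known ingredient of Noma's classification and contains no proof of it, so there is no proof in the paper to compare your route against. Judged on its own, your proposal is a sensible high-level reading of the statement, but it is not a proof: the two steps that carry essentially all of the mathematical content are asserted rather than argued.

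Concretely: (a) in the first bullet, your reduction is correct --- $E_x(X)$ is the cone of lines of $X$ through $x$, it lies in the embedded tangent space $\mathbb{T}_xX\cong\prj{n}$, so $D_x(X)$ is a cone over a base $B_x\subseteq \prj{n-1}$ and linearity is equivalent to $\deg B_x=1$. But $\deg B_x=1$ \emph{is} the theorem at that point, and your ``contact-and-incidence argument'' names no mechanism that produces this bound: no intersection number is computed, no dimension count on the incidence variety is carried out, and invoking ``reflexivity'' and ``generic smoothness'' is a gesture toward characteristic zero, not an argument. (b) In the second bullet, ``the presence of a divisor of conics \dots pins the structure to a quadratic one and identifies $X$ with the cone over $v_2(\prj{2})$'' is likewise a restatement of the conclusion. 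Recognizing the Veronese surface and its cones from a divisorial family of conics through two general points, and forcing $e=3$, is a genuine Severi-type classification problem that the proposal never engages. By contrast, the inner-projection induction you outline for the third bullet is essentially workable: the minimal index $m_0$ drops by exactly one under a general inner projection (images of general points are general, and since $X$ fails $(E_1)$ the projection has no exceptional divisor, so exceptional divisors of the composite and of $X'$ correspond); the non-birational case you propose to ``handle separately'' is in fact vacuous by Segre's linearity theorem \cite{Segre36}, since $\overline{\mathcal{C}}(X)$ is a finite union of linear spaces and so cannot contain a general point of a non-degenerate $X$; and the borderline case (codimension $4$, minimal index $3$, projecting to the Veronese cone) is excluded because then $\deg X=5=e+1$, and the del Pezzo--Bertini classification of varieties of minimal degree \cite{EH87} leaves no candidate of codimension $4$ failing $(E_1)$. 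But that induction has the second bullet as its base case, so gaps (a) and (b) leave the whole proposal without its foundation.
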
	

When $X$ is smooth, a famous result due to Zak cf.~\cite{Laz04a} says the Gauss map $\gamma: X\rightarrow \mathbb{G}(n,N)$, which sends a point to its embedded projective tangent space in a Grassmannian, is finite, and hence is non-constant on any subvariety of positive dimension. In general, Gauss map $\gamma$ is defined only on the smooth locus ${\rm Sm}(X)$ of $X$.

Let $\Lambda$ be one irreducible component of $\overline{\mathcal{C}}(X)$, and $\gamma|_\Lambda$ denote the restriction of the Gauss map
to $\Lambda\cap {\rm Sm}(X)$. Following Noma, we call this morphism a {\em partial Gauss map}. Partial Gauss maps give further classification of projective varieties, as we will see as follows.

\begin{definition}\label{ratscrollnot}
Let $\Lambda \subseteq \pp^N$ be a linear subspace of dimension $l$ and consider $\mathcal{E}$ be an ample vector bundle of rank $n-l\geq 1$ on $\pp^1$.
The {\em conical rational scroll $\mathbb{E}^{\Lambda}_{\mathcal{E}}$ with vertex $\Lambda$} is the projective bundle $\pp(\mathcal{O}_{\pp^1}^{l+1} \oplus\mathcal{E})$
with the projection $p\colon \mathbb{E}^{\Lambda}_{\mathcal{E}}\rightarrow \pp^1$.
Observe that the conical rational scroll has a natural embedding $\psi \colon \mathbb{E}^{\Lambda}_{\mathcal{E}} \rightarrow \pp^N$ into $\pp^N$ defined by a linear subsystem of $|\mathcal{O}_{E^{\Lambda}_{\mathcal{E}}}(1)|$ such that
the subbundle $\widetilde{\Lambda}_{\pp^1} = \pp (\mathcal{O}^{l+1}_{\pp^1})$ maps onto $\Lambda$ by $\psi$.
A projective variety $X\subseteq \pp^N$ is called a {\em birational divisor of the rational conical scroll $\mathbb{E}^{\Lambda}_{\mathcal{E}}$}
if $X$ is the birational image of a prime divisor $\widetilde{X}$ on $\mathbb{E}^{\Lambda}_{\mathcal{E}}$ by the birational embedding $\psi$.
Moreover, we will say that the divisor $X$ is of type $(\mu,b)$ if $\widetilde{X}\in | \mathcal{O}_{E^{\Lambda}_{\mathcal{E}}}(\mu) \otimes p^*\mathcal{O}_{\pp^1}(b)|$ with
$\mu,b\in \zz$.
\end{definition}

\begin{theorem}[{\cite[Theorem 7]{Noma18}}]\label{Nomathm2}
Let $X\subseteq \mathbb{P}^N$ be a projective variety of dimension $n\geq 1$ and codimension $e\geq 2$.
Let $\Lambda\subseteq \pp^N$ be a subspace of dimension $l$ with $n-1\geq l \geq 0$. Then the following are equivalent:
\begin{enumerate}
  \item $X$ is non-degenerate and $\Lambda$ is an irreducible component of $\overline{\mathcal{C}}(X)$ such that $\gamma|_{\Lambda}$ is non-constant.
  \item $X$ is a birational divisor of type $(\mu,1)$ with $\mu\geq 2$ on a conical rational scroll $\mathbb{E}^{\Lambda}_{\mathcal{E}}$ with vertex $\Lambda$. The original divisor $\widetilde{X}$ satisfies $\widetilde{X}\cap \widetilde{\prj{1}}$ is a nonzero effective divisor of $\widetilde{\Lambda_{\prj{1}}}$ that is not equal to $(g)_0\times\prj{1}+\Lambda\times(\omega)_0$ for any $g\in H^0(\Lambda, \sshf{\Lambda}(\mu))$ and any $\omega\in H^0(\prj{1}, \sshf{\prj{1}}(1))$.
\end{enumerate}
Moreover, under the equivalent conditions, we have $\dim(\Lambda)\leq \dim(X_{\text{Sing}})+2$.
\end{theorem}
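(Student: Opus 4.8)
The plan is to prove the two implications separately and then read off the dimension bound from the geometry of the vertex contraction; I expect $(2)\Rightarrow(1)$ to be a direct computation on the scroll, while $(1)\Rightarrow(2)$ is the genuinely hard reconstruction step. For $(2)\Rightarrow(1)$, write $\mathbb{E}^{\Lambda}_{\mathcal{E}}=\pp(\mathcal{O}^{l+1}_{\pp^{1}}\oplus\mathcal{E})$ with projection $p$ and embedding $\psi$, and recall that $\psi$ contracts the vertex bundle $\widetilde{\Lambda}_{\pp^{1}}\cong\Lambda\times\pp^{1}$ to $\Lambda$ by projection onto the first factor. Over a general $t\in\pp^{1}$ the fibre $\mathbb{E}_{t}\cong\pp^{n}$ is embedded linearly as an $n$-plane $\pp^{n}_{t}\supseteq\Lambda$, and these planes meet pairwise exactly along $\Lambda$. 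Restricting $\mathcal{O}(\mu)\otimes p^{*}\mathcal{O}(1)$ to $\mathbb{E}_{t}$ shows that $X_{t}:=\psi(\widetilde{X}\cap\mathbb{E}_{t})$ is a hypersurface of degree $\mu$ in $\pp^{n}_{t}$, while restricting to $\widetilde{\Lambda}_{\pp^{1}}$ shows that $\widetilde{X}\cap\widetilde{\Lambda}_{\pp^{1}}$ is a divisor of bidegree $(\mu,1)$ on $\Lambda\times\pp^{1}$; its surjectivity onto $\Lambda$ under the contraction forces $\Lambda\subseteq X$, yet a general point $u\in\Lambda$ does \emph{not} lie on $X_{t}$, since $X_{t}\cap\Lambda$ is only a degree-$\mu$ hypersurface of $\Lambda$. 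Consequently, for general $x\in X$ the line $\langle u,x\rangle$ lies in the fibre plane through $x$ and meets $X$ in the point $u$ together with the $\mu$ points of $X_{t}\cap\langle u,x\rangle$, so that $\mathrm{length}(X\cap\langle u,x\rangle)=\mu+1\geq 3$; this is exactly the assertion that $u$ is a non-birational center, whence $\Lambda\subseteq\overline{\mathcal{C}}(X)$ and the projection from $u$ has degree $\mu$. Non-degeneracy follows from the ampleness of $\mathcal{E}$, and the non-constancy of $\gamma|_{\Lambda}$ follows from the hypothesis that $\widetilde{X}\cap\widetilde{\Lambda}_{\pp^{1}}$ is not of the split form $(g)_{0}\times\pp^{1}+\Lambda\times(\omega)_{0}$, which is precisely the condition preventing the tangent $n$-planes of $X$ along $\Lambda$ from being constant. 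The only delicate point here is verifying that $\Lambda$ is a whole component of $\overline{\mathcal{C}}(X)$ rather than merely contained in one; for this I would use the type $(\mu,1)$ hypothesis (as opposed to $(\mu,b)$ with $b\geq 2$) to control the geometry transverse to $\Lambda$.

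For $(1)\Rightarrow(2)$, the starting point is Segre's theorem, which makes $\Lambda$ a linear subspace of $\pp^{N}$, together with the observation that $\Lambda\subseteq X$ forces $\Lambda\subseteq\overline{T_{u}X}$ for every $u\in\Lambda\cap\mathrm{Sm}(X)$, so $\gamma|_{\Lambda}$ is a non-constant morphism into the family of $n$-planes containing $\Lambda$. The content of the implication is to produce a rational fibration $X\dashrightarrow\pp^{1}$ whose general fibre spans an $n$-plane through $\Lambda$, so that these planes sweep out the conical rational scroll $\mathbb{E}^{\Lambda}_{\mathcal{E}}$ and exhibit $X$ as a prime divisor on it. The existence of this $\pp^{1}$-family of $n$-planes is where the hypotheses must be used in an essential way: the non-birational center condition makes the projection from a general point of $\Lambda$ generically finite of degree at least $2$, and the non-constancy of $\gamma|_{\Lambda}$ forbids the tangent planes along $\Lambda$ from being constant, which together should force the family of spanning planes to be one-dimensional and rational. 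I would then match the numerical invariants of the embedding against those of the scroll to identify the divisor type as $(\mu,1)$ with $\mu\geq 2$, translating the non-constancy of $\gamma|_{\Lambda}$ back into the non-split condition on $\widetilde{X}\cap\widetilde{\Lambda}_{\pp^{1}}$. I expect this reconstruction --- proving rationality of the family of planes and producing the bundle structure --- to be the main obstacle, as it is exactly the point where Noma's finer analysis of inner projections and partial Gauss maps enters.

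Finally, for the dimension bound I would work inside the scroll furnished by the equivalence. Writing the defining equation of $\widetilde{X}\cap\widetilde{\Lambda}_{\pp^{1}}$ as $t_{0}\,g_{0}(\lambda)+t_{1}\,g_{1}(\lambda)$ with $g_{0},g_{1}\in H^{0}(\Lambda,\mathcal{O}_{\Lambda}(\mu))$ and $[t_{0}:t_{1}]\in\pp^{1}$, the locus where the whole $\pp^{1}$-fibre of the vertex is contained in $\widetilde{X}$ is $Z=\{g_{0}=g_{1}=0\}\subseteq\Lambda$, which is cut out by two equations and hence has dimension at least $l-2$. The non-split hypothesis guarantees that $g_{0}$ and $g_{1}$ are not proportional, so $Z$ is a proper subvariety described by this complete intersection. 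Over $Z$ the contraction $\psi$ collapses a positive-dimensional fibre into $\Lambda$, producing singular points of $X$, so $\psi(Z)\subseteq X_{\text{Sing}}$ and therefore $\dim X_{\text{Sing}}\geq l-2=\dim\Lambda-2$, which is the asserted inequality.
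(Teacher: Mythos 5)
First, a point of order: the paper does not prove this statement at all --- it is Noma's classification theorem, quoted verbatim from \cite{Noma18}*{Theorem 7} and used as a black box, so there is no internal proof to compare yours against; you were effectively asked to reprove a deep result of Noma from scratch. Judged on its own terms, your proposal has a genuine gap: the implication $(1)\Rightarrow(2)$ is never proved. What you offer for this direction is a description of what a proof would have to accomplish --- produce the rational one-parameter family of $n$-planes through $\Lambda$, prove its rationality, build the $\mathbb{P}^1$-bundle structure, and match invariants to get type $(\mu,1)$ --- followed by your own admission that this reconstruction is ``exactly the point where Noma's finer analysis of inner projections and partial Gauss maps enters.'' That reconstruction \emph{is} the content of the theorem; deferring it means the hard direction is absent. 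The easier direction $(2)\Rightarrow(1)$ also rests on two claims that are asserted rather than proved: that the non-split condition on $\widetilde{X}\cap\widetilde{\Lambda}_{\mathbb{P}^1}$ forces $\gamma|_{\Lambda}$ to be non-constant (you call this ``precisely the condition,'' but give no tangent-space computation relating the bidegree-$(\mu,1)$ equation to the Gauss map), and that $\Lambda$ is an entire irreducible component of $\overline{\mathcal{C}}(X)$ rather than a proper subset of one (you flag this yourself and defer it). A smaller but real point: $\mathcal{C}(X)$ is by definition contained in ${\rm Sm}(X)$, so your length count at a general $u\in\Lambda$ only places $u$ in $\mathcal{C}(X)$ after you verify that such $u$ is a smooth point of $X$.

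Your argument for the bound $\dim(\Lambda)\leq\dim(X_{\text{Sing}})+2$ has the right skeleton (writing the vertex divisor as $t_0g_0+t_1g_1$ and locating $Z=\{g_0=g_1=0\}$ of dimension $\geq l-2$ is correct, and non-splitness does give that $g_0,g_1$ are non-proportional), but it contains a step that is false as stated: from ``$\psi$ collapses the positive-dimensional fibre $\{u\}\times\mathbb{P}^1$ for $u\in Z$'' you conclude that $u\in X_{\text{Sing}}$. Contracting a positive-dimensional fibre does not by itself force the image point to be singular --- the blow-up of a smooth surface point contracts a curve to a smooth point. To rescue this you need something like purity of the exceptional locus: if $u\in Z$ were a smooth (hence $\mathbb{Q}$-factorial) point of $X$, the exceptional locus of the birational morphism $\widetilde{X}\to X$ would have to be of pure codimension one over a neighborhood of $u$, whereas near $\{u\}\times\mathbb{P}^1$ it sits inside $Z\times\mathbb{P}^1$; one then has to handle the boundary case in which $g_0,g_1$ share a common factor and $Z$ has a component of dimension $l-1$, where this dimension count alone does not conclude. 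None of this is in your write-up. In summary: $(2)\Rightarrow(1)$ is repairable with work, the dimension bound needs the singularity argument supplied, and $(1)\Rightarrow(2)$ --- the theorem's core --- is missing.
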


\begin{definition}
Let $\Lambda \subseteq \pp^N$ be a linear subspace of dimension $l$ and let $\pp^{\overline{N}}\subseteq \pp^N$
be a linear subspace disjoint from $\Lambda$ with $\overline{N}=N-l-1$.
Consider the linear inner projection
\[
\pi_\Lambda \colon \pp^N \setminus \Lambda \rightarrow \pp^{\overline{N}}
\]
from $\Lambda$. Let $\pi_{\Lambda,X}\colon X \setminus \Lambda \rightarrow \pp^{\overline{N}}$ be its restriction to $X$.
We will consider the incidence variety
\[
\mathbb{F}^{\Lambda} = \{ (x,w) \mid x \in \langle\Lambda,w\rangle \} \subseteq \pp^N \times \pp^{\overline{N}}.
\]
The projection $\tau \colon \mathbb{F}^{\Lambda}\rightarrow \pp^{\overline{N}}$ on the second coordinate
gives $\mathbb{F}^{\Lambda}$ a $(\pp^{l+1})$-bundle structure over $\pp^{\overline{N}}$.
Given a birational embedding $\nu \colon Y \rightarrow \pp^{\overline{N}}$ of a smooth projective variety $Y$
we define the {\em conical scroll with vertex $\Lambda$ over $Y$} to be the pull-back
\[
\tau_Y \colon \mathbb{F}^{\Lambda}_Y \rightarrow Y
\]
of $\tau$ by $\nu$.
Let $\mathcal{O}_Y(1):=\nu^*\mathcal{O}_{\pp^{\overline{N}}}(1)$ and
$
\mathcal{F}_Y := H^0(\Lambda, \mathcal{O}_{\Lambda}(1)) \otimes \mathcal{O}_Y \oplus \mathcal{O}_Y(1).
$
Then
$\mathcal{F}^{\Lambda}_Y = \pp_Y(\mathcal{F}_Y)$.
Observe that $\mathbb{F}^{\Lambda}_Y$ has a birational embedding $\phi_Y$ into $\pp^N$
induced by the first projection of $\pp^N\times Y$ and the subbundle $\widetilde{\Lambda}_Y:=\Lambda \times Y \subseteq \mathbb{F}_Y^{\Lambda}$ with the projection $\overline{\tau}_Y \colon \widetilde{\Lambda}_Y \to Y$, which is mapped onto $\Lambda$ by $\varphi_Y$.
We say that the projective variety $X\subseteq \pp^N$ is a {\em birational divisor of the conical scroll $\mathbb{F}^{\Lambda}_Y$} if $X$ is birational to some prime divisor $\widetilde{X}$ on $\mathbb{F}^{\Lambda}_Y$ by $\phi_Y$.
Moreover, we will say that the divisor $X$ is {\em of type $(\mu,\mathcal{L})$} if $\widetilde{X}\in | \mathcal{O}_{\mathbb{F}^{\Lambda}_Y}(\mu)\otimes \tau_Y^*(\mathcal{L})|$
for $\mu \in \zz$ and $\mathcal{L}\in \Pic(Y)$.
\end{definition}

\begin{theorem}[{\cite[Theorems 4 and 5]{Noma18}}]\label{Nomathm1}
Let $X\subseteq \pp^N$ be a projective variety of dimension $n\geq 1$ and codimension $e\geq 2$.
Let $\Lambda \subseteq \pp^N$ be a subspace of dimension $l$ with $n-1\geq l \geq 0$.
Suppose that $X$ is nondegenerate, $\Lambda$ is an irreducible component of $\overline{\mathcal{C}}(X)$,
$\Lambda \cap {\rm Sm}(X)\neq \emptyset$ and $\gamma|_{\Lambda}$ is constant.
Then $X$ is a birational divisor of type $(\mu, \mathcal{L})$ with $\mu\geq 2$ and $\mathcal{L}\in \Pic(Y)$
on a conical scroll $\mathbb{F}^{\Lambda}_Y$ with vertex $\Lambda$ over an $(n-l)$-dimensional smooth projective variety $Y$
with a non-degenerate birational embedding $\nu \colon Y \rightarrow \pp^{\overline{N}}$ satisfying the following:
\begin{enumerate}
\item $H^0(Y, \mathcal{L}) \neq 0, (\mathcal{L}, \mathcal{O}_{Y}^{n-l-1})=1, \deg X = \mu \deg \nu(Y) + 1$;
\item $\widetilde{X} \cap \widetilde{\Lambda}_Y = (g)_0 \times Y + \Lambda \times (w)_0$ as a divisor on $\widetilde{\Lambda}$ for some $g \in H^0(\Lambda, \mathcal{O}_{\Lambda}(\mu))$ and $w \in H^0(Y, \mathcal{L})$ with $g, w \neq 0$;
\item $\mu=l(X \cap \langle u, x \rangle ) - 1$ for general $u \in \Lambda$ and general $x \in X$;
\item $(g)_0 \subseteq \Lambda \cap X_{\text{Sing}}$ as set (and the equality holds if $(w_0)$ is irreducible and $\mu \geq 2$); in particular, $\dim(\Lambda) \leq \dim(X_{\text{Sing}})+1$.
\end{enumerate}
\end{theorem}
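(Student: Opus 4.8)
The plan is to realize $X$ explicitly as a divisor on a conical scroll built from the inner projection $\pi_{\Lambda,X}$, and then to read off conditions $(1)$--$(4)$ from that description; the hypothesis that $\gamma|_{\Lambda}$ is constant will be used decisively only to obtain the product decomposition in $(2)$ and, through it, the singularity statement in $(4)$. First I would set $\mu := {\rm length}(X\cap\langle u,x\rangle)-1$ for general $u\in\Lambda$ and general $x\in X$. Since $\Lambda\subseteq\overline{\mathcal{C}}(X)\subseteq X$ and $\Lambda\cap{\rm Sm}(X)\neq\emptyset$, a general $u\in\Lambda$ is a non-birational center, so $\langle u,x\rangle$ meets $X$ in length at least $3$ and $\mu\geq 2$; this is $(3)$. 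Because $\Lambda$ is a linear subspace contained in $X$, for general $x$ the span $\langle\Lambda,x\rangle\cong\pp^{l+1}$ cuts $X$ in a degree-$(\mu+1)$ hypersurface containing the hyperplane $\Lambda\cong\pp^{l}$; write $R_x$ for the residual degree-$\mu$ hypersurface. Hence the fibers of $\pi_{\Lambda,X}\colon X\dashrightarrow\pp^{\overline N}$ are $l$-dimensional, its image $Y_0$ is $(n-l)$-dimensional, and $Y_0$ is nondegenerate since $X$ is. I would then take $\nu\colon Y\to\pp^{\overline N}$ to be a smooth birational model of $Y_0$, so that $\dim Y=n-l$ as required.

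Next I would assemble the conical scroll. The incidence variety $\mathbb{F}^{\Lambda}$ resolves the projection from $\Lambda$ and is a $\pp^{l+1}$-bundle over $\pp^{\overline N}$; pulling back along $\nu$ gives $\tau_Y\colon\mathbb{F}^{\Lambda}_Y=\pp_Y(\mathcal{F}_Y)\to Y$. The strict transform $\widetilde X=\overline{\phi_Y^{-1}(X\setminus\Lambda)}$ meets the general fiber $\langle\Lambda,\nu(y)\rangle\cong\pp^{l+1}$ exactly in the residual hypersurface $R_{\nu(y)}$ of degree $\mu$, so $\widetilde X$ is an irreducible divisor with $\widetilde X\in|\mathcal{O}_{\mathbb{F}^{\Lambda}_Y}(\mu)\otimes\tau_Y^*\mathcal{L}|$ for a unique $\mathcal{L}\in\Pic(Y)$, and $\phi_Y$ maps $\widetilde X$ birationally onto $X$. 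A fiberwise degree count then gives $\deg X=\mu\deg\nu(Y)+1$, where the extra $1$ comes from the $\Lambda$-component that $\phi_Y$ contracts onto $\Lambda$; the same count yields $(\mathcal{L},\mathcal{O}_Y^{n-l-1})=1$, while the nonvanishing $H^0(Y,\mathcal{L})\neq 0$ will follow from the effective divisor $(w)_0$ produced in $(2)$. This establishes $(1)$.

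It remains to exploit the constancy of $\gamma|_{\Lambda}$. Restricting $\widetilde X$ to the subbundle $\widetilde\Lambda_Y=\Lambda\times Y$ produces a divisor in $|\mathcal{O}_\Lambda(\mu)\boxtimes\mathcal{L}|$, i.e.\ a section of $H^0(\Lambda,\mathcal{O}_\Lambda(\mu))\otimes H^0(Y,\mathcal{L})$ whose fiber over $y$ is the trace $R_{\nu(y)}\cap\Lambda$, a degree-$\mu$ hypersurface in $\Lambda$. The claim is that constancy of the embedded tangent space $T_uX$, as $u$ ranges over $\Lambda\cap{\rm Sm}(X)$, forces this trace to be independent of $y$ wherever it is proper; a section whose associated family is generically constant is decomposable, giving $\widetilde X\cap\widetilde\Lambda_Y=(g)_0\times Y+\Lambda\times(w)_0$ for some $g\in H^0(\Lambda,\mathcal{O}_\Lambda(\mu))$ and $w\in H^0(Y,\mathcal{L})$. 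This is precisely the product form excluded in the non-constant case of Theorem~\ref{Nomathm2}. Finally, along the fixed trace $(g)_0$ the residual sheet $R_{\nu(y)}$ meets the sheet $\Lambda$, so $X$ is singular there; thus $(g)_0\subseteq\Lambda\cap X_{\text{Sing}}$, and since $(g)_0$ has dimension $l-1$ we obtain $\dim\Lambda\leq\dim X_{\text{Sing}}+1$, which is $(4)$, with equality when $(w)_0$ is irreducible and $\mu\geq 2$.

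The hard part will be the decomposability in $(2)$: converting the constancy of the partial Gauss map into the rigidity of the traces $R_{\nu(y)}\cap\Lambda$. I expect this to require an infinitesimal comparison along $\Lambda$, showing that a trace genuinely varying with $y$ would make the embedded tangent spaces $T_uX$ rotate in the Grassmannian and thus contradict the hypothesis. Once this structural input is in place, the degree bookkeeping for $(1)$ and the singularity count in $(4)$ are formal consequences of the scroll description.
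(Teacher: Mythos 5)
First, a framing point: the paper itself gives no proof of this statement --- it is imported verbatim from \cite{Noma18} (Theorems 4 and 5) and used as a black box --- so your sketch can only be judged as an attempted reconstruction of Noma's argument. Judged that way, your skeleton (define $\mu$ from the secant length, project from $\Lambda$, build $\mathbb{F}^{\Lambda}_Y$ as the pullback of the incidence variety, take the strict transform $\widetilde{X}$, observe that (2) is equivalent to constancy of the traces $R_{\nu(y)}\cap\Lambda$) is the right architecture, but the proposal has a genuine gap sitting exactly where you place it: conclusion (2). The only implication that distinguishes this theorem from Theorem \ref{Nomathm2} is ``$\gamma|_{\Lambda}$ constant $\Rightarrow$ the trace divisors $R_{\nu(y)}\cap\Lambda$ do not move with $y$'', and your text replaces it with the sentence that you ``expect this to require an infinitesimal comparison along $\Lambda$''. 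That is a plan, not a proof. Since you derive $H^0(Y,\mathcal{L})\neq 0$ in (1) from the section $w$ produced by (2), and all of (4) from the fixed part $(g)_0$ produced by (2), none of the four conclusions is actually established once this step is missing. A complete argument must make the tangent-space/trace link explicit, e.g.\ by showing that for $u\in\Lambda\cap{\rm Sm}(X)$ the space $T_uX$ is spanned by $\Lambda$ together with the tangent spaces at $u$ of the residual hypersurfaces $R_{\nu(y)}$ through $u$, so that a genuinely moving trace forces the Gauss map to move; this is where the real work of \cite{Noma18} lies.

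Two of the steps you treat as routine are also non-sequiturs as written. First, the claim that $\langle\Lambda,x\rangle$ cuts $X$ in a degree-$(\mu+1)$ hypersurface ``because $\Lambda$ is a linear subspace contained in $X$'' does not follow: containment of $\Lambda$ alone yields only $\Lambda$ plus an a priori lower-dimensional residual set. What actually produces an $l$-dimensional residual component is the non-birational-center hypothesis: for fixed general $x$, each general $u\in\Lambda$ gives a third point $z(u,x)\in\langle u,x\rangle\cap X$ outside $\Lambda$; the assignment $u\mapsto z(u,x)$ is injective (if $z(u,x)=z(u',x)=z$ with $u\neq u'$, then $u,u'\in\langle z,x\rangle$, so $\langle u,u'\rangle\subseteq\Lambda$ contains $x$, a contradiction), so these third points sweep out an $l$-dimensional subset of $X\cap\langle\Lambda,x\rangle\setminus\Lambda$; and one must separately exclude $X\cap\langle\Lambda,x\rangle=\langle\Lambda,x\rangle$, i.e.\ that $X$ is a cone with vertex containing $\Lambda$, which is precisely where $\Lambda\cap{\rm Sm}(X)\neq\emptyset$ enters. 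One also needs $\Lambda$ to appear with multiplicity one in that intersection to get residual degree exactly $\mu$. Second, in (4), ``the residual sheet meets the sheet $\Lambda$, so $X$ is singular there'' is not an argument: the fact that $\phi_Y|_{\widetilde{X}}$ contracts $(g)_0\times Y$ to $(g)_0$ is equally true of a blow-down to a smooth point, so contraction alone proves nothing; singularity of $X$ along $(g)_0$ requires a tangent-cone computation at $p\in(g)_0$ exploiting the $(n-l)$-dimensional family of degree-$\mu$ hypersurfaces $R_{\nu(y)}$, each spanning its $\mathbb{P}^{l+1}$, all passing through $p$.
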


The following is a summary of Theorem \ref{classification by E_i}, \ref{Nomathm2}, and \ref{Nomathm1}.

\begin{corollary}\label{nomasclassification}
Let $X\subseteq \pp^N$ be a non-degenerate projective variety of dimension $n\geq 2$ and codimension $e\geq 2$. Then one of the following holds:
\begin{enumerate}
\item $X$ is projectively equivalent to a scroll over a smooth curve,
\item $X$ is projectively equivalent to a cone over the Veronese surface,
\item $X$ does not satisfy Condition $(E_{e-1})$.
\begin{itemize}
\item[(3.1)] The partial Gauss map is non-constant on some irreducible component $\Lambda$ of $\overline{\mathcal{C}}(X)$.
Therefore, $X$ is a birational type divisor of a conical rational scroll $\mathbb{E}^{\Lambda}_{\mathcal{E}}$, or
\item[(3.2)] The partial Gauss map is constant on every irreducible component $\Lambda$ of $\overline{\mathcal{C}}(X)$.
Therefore, $X$ is a birational type divisor of a conical scroll $\mathbb{F}^{\Lambda}_Y$.
\end{itemize}
\end{enumerate}
\end{corollary}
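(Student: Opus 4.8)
The plan is to read off the three asserted alternatives directly from Theorems \ref{classification by E_i}, \ref{Nomathm2}, and \ref{Nomathm1}; the only real work is to organize the hypotheses of those theorems into a single exhaustive and mutually exclusive case division. The organizing principle is the monotone chain
\[
(E_1) \Longrightarrow (E_2) \Longrightarrow \cdots \Longrightarrow (E_{e-1})
\]
coming from the implication ``$(E_k)\Rightarrow(E_{k+1})$'' recorded right after the definition of Condition $(E_m)$. Thus every non-degenerate $X$ of codimension $e\geq 2$ falls into exactly one of the following: (a) it satisfies $(E_1)$; (b) it satisfies $(E_{e-1})$ but not $(E_1)$; (c) it does not satisfy $(E_{e-1})$, and a fortiori not $(E_1)$.

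First I would dispose of (a): the first bullet of Theorem \ref{classification by E_i} immediately gives that $X$ is projectively equivalent to a scroll over a smooth curve, which is alternative (1). Next, for (b), I would argue that the hypothesis forces $e=3$. Indeed $e=2$ is impossible, since there $(E_{e-1})=(E_1)$ and ``satisfies $(E_1)$ but not $(E_1)$'' is contradictory; and $e\geq 4$ is impossible, since the third bullet of Theorem \ref{classification by E_i}, applied with $m=e-1\geq 3$, yields $(E_{e-1})\Leftrightarrow(E_1)$. With $e=3$ in hand we have $(E_{e-1})=(E_2)$, and the second bullet of Theorem \ref{classification by E_i} identifies $X$ as a cone over the Veronese surface, i.e.\ alternative (2).

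It remains to treat case (c), which is alternative (3). Here I would invoke the theorem of Segre recorded after Definition \ref{nbc}, that $\overline{\mathcal{C}}(X)$ is a finite union of linear subspaces, and split according to the behavior of the partial Gauss map $\gamma|_{\Lambda}$ on the irreducible components $\Lambda$. If some component $\Lambda$ has $\gamma|_{\Lambda}$ non-constant, then the implication $(1)\Rightarrow(2)$ of Theorem \ref{Nomathm2} exhibits $X$ as a birational divisor of type $(\mu,1)$ on the conical rational scroll $\mathbb{E}^{\Lambda}_{\mathcal{E}}$, giving (3.1). If instead $\gamma|_{\Lambda}$ is constant on every component, I would apply Theorem \ref{Nomathm1} to one such component $\Lambda$; its hypothesis $\Lambda\cap{\rm Sm}(X)\neq\emptyset$ is automatic, because $\mathcal{C}(X)\subseteq{\rm Sm}(X)$ by Definition \ref{nbc} and $\Lambda$ is the Zariski closure of a piece of $\mathcal{C}(X)$, so $\Lambda$ meets ${\rm Sm}(X)$. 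This realizes $X$ as a birational divisor of type $(\mu,\mathcal{L})$ on a conical scroll $\mathbb{F}^{\Lambda}_Y$, giving (3.2).

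The statement is thus essentially a bookkeeping synthesis, and I expect the only delicate point to be the well-definedness of the subdivision in case (c): one must ensure that a component of $\overline{\mathcal{C}}(X)$ is actually available to support the structural conclusions of Theorems \ref{Nomathm2} and \ref{Nomathm1}, so the dichotomy (3.1)/(3.2) should be read as selecting such a component. All the genuine geometric content sits inside the three cited theorems; no new vanishing or intersection-theoretic estimate is needed beyond checking that the mutually exclusive trichotomy (a)/(b)/(c) lines up precisely with alternatives (1)/(2)/(3).
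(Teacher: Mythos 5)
Your proposal is correct and follows essentially the same route as the paper, which offers no separate argument but simply records the corollary as a summary of Theorems \ref{classification by E_i}, \ref{Nomathm2} and \ref{Nomathm1}; your trichotomy via the chain $(E_1)\Rightarrow\cdots\Rightarrow(E_{e-1})$, the forcing of $e=3$ in the intermediate case, and the Gauss-map dichotomy on components of $\overline{\mathcal{C}}(X)$ are exactly the intended bookkeeping. The caveat you flag about $\overline{\mathcal{C}}(X)=\emptyset$ (where (3.2) holds vacuously but no conical-scroll structure is produced) is a looseness in the corollary's wording itself rather than a gap in your argument, and it is harmless for the paper's purposes since the downstream Proposition \ref{p5} uses only the constancy of the partial Gauss map, not the conical-scroll realization.
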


\subsection{Multiplier ideals and Nadel Vanishing}\label{kltsing}

We recall a version of Nadel vanishing theorem
for linear systems.
\begin{definition}
A {\em log pair} $(X,\Delta)$ is a normal projective variety $X$ with an effective divisor $\Delta$ on $X$ such that $K_X+\Delta$ is a $\qq$-Cartier divisor on $X$.
\end{definition}
If $X$ is $\qq$-Gorenstein, then $(X,\Delta)$ is a log pair for every effective $\qq$-Cartier divisor $\Delta$ on $X$.

\begin{definition}
Let $(X,\Delta)$ be a log pair, a {\em log resolution} of $(X,\Delta)$ is a projective birational morphism $\pi \colon Y \rightarrow X$
from a smooth projective variety $Y$ such that ${\rm Ex}(\pi)$ is purely of codimension one and ${\rm Ex}(\pi)\cup \pi^{-1}_*(\Delta)$ is a divisor
with simple normal crossing support on $Y$.
\end{definition}

\begin{definition}
Let $(X,\Delta)$ be a log pair and $\pi \colon Y \rightarrow X$ be a log resolution of the pair.
Then the {\em multiplier ideal} $\mathcal{J}(X,\Delta)$ of $(X,\Delta)$ is
\[
\pi_* \mathcal{O}_Y( K_Y - \lfloor \pi^*(K_X+\Delta) \rfloor)
\]
The multiplier ideal $\mathcal{J}(X,\Delta)\subseteq \mathcal{O}_X$ is an ideal sheaf which is independent of the chosen log resolution.
\end{definition}

\begin{definition}
Let $X$ be a $\qq$-Gorenstein projective variety and $M$ a Cartier divisor with a non-empty associated linear system $|M|$.
Consider a projective birational morphism $\pi \colon Y\rightarrow X$ such that $Y$ is a smooth projective variety, $\pi^*|M|=|W|+F$, where
$F+{\rm Ex}(\pi)$ is a divisor with simple normal crossing support, and $|W|$ is base point free. For a rational number $c>0$, we
define the {\em multiplier ideal $\mathcal{J}(X,c\cdot |M|)$ associated to $|M|$} to be
\[
\pi_*\mathcal{O}_Y(  K_Y - \lfloor \pi^*K_X + c  F \rfloor).
\]
The multiplier ideal $\mathcal{J}(X,c\cdot|M|)\subseteq \mathcal{O}_X$ is an ideal sheaf which is independent of the chosen log resolution.
\end{definition}

Denote by $\overline{\frak{a}}$ the integral closure for an ideal sheaf $\frak{a}$, one has the following containment of ideals

\begin{lemma}\label{inclusion}
Let $X$ be a $\mathbb{Q}$-Gorenstein normal projective variety and $|V|$ a linear series. Then the following containment holds
\[\mathcal{J}(X, |V|)\supseteq \mathcal{J}(X, \sshf{X})\overline{\frak{b}(|V|)}.\]
\end{lemma}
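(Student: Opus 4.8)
The plan is to reduce everything to a single common log resolution and then compare the two multiplier ideals divisor-by-divisor. First I would fix a projective birational morphism $\pi\colon Y\to X$ from a smooth $Y$ that simultaneously resolves the linear series $|V|$ and the ideal $\frak{b}(|V|)$; since $|V|$ and its base ideal $\frak{b}(|V|)$ have the same base locus, a single log resolution serves for both. On $Y$ write $\pi^*|V| = |W| + F$ with $|W|$ base point free and $F = \sum_i a_i E_i$ the fixed divisor, so that $\frak{b}(|V|)\cdot\mathcal{O}_Y = \mathcal{O}_Y(-F)$ and hence $\overline{\frak{b}(|V|)}\cdot\mathcal{O}_Y = \mathcal{O}_Y(-F)$ as well (the inverse image ideal of an integrally closed ideal pulled back to a normal variety is $\mathcal{O}_Y(-F)$, since $F$ is exactly the divisorial part). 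By definition
\[
\mathcal{J}(X, |V|) = \pi_*\mathcal{O}_Y\bigl(K_Y - \lfloor \pi^*K_X + F \rfloor\bigr),
\]
taking $c=1$.

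Next I would analyze the right-hand side. By the definition of the multiplier ideal of the structure sheaf (the case $\Delta = 0$),
\[
\mathcal{J}(X,\mathcal{O}_X) = \pi_*\mathcal{O}_Y\bigl(K_Y - \lfloor \pi^* K_X\rfloor\bigr),
\]
which is the Grauert–Riemenschneider–type sheaf measuring the rationality of the singularities of $X$. The key point is that the product $\mathcal{J}(X,\mathcal{O}_X)\cdot\overline{\frak{b}(|V|)}$ is contained in $\pi_*\bigl(\mathcal{O}_Y(K_Y - \lfloor \pi^*K_X\rfloor)\otimes \mathcal{O}_Y(-F)\bigr)$; this uses the general fact that for a sheaf $\mathcal{G} = \pi_*\mathcal{F}$ and an ideal $\frak{a}$ with $\frak{a}\cdot\mathcal{O}_Y = \mathcal{O}_Y(-F)$ one has $\mathcal{G}\cdot\overline{\frak{a}}\subseteq \pi_*(\mathcal{F}(-F))$, obtained by applying the projection formula after pulling sections back to $Y$.

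It therefore remains to compare $\pi_*\mathcal{O}_Y\bigl(K_Y - \lfloor\pi^*K_X\rfloor - F\bigr)$ with $\pi_*\mathcal{O}_Y\bigl(K_Y - \lfloor \pi^*K_X + F\rfloor\bigr)$, and this is where the only real arithmetic input enters. Since $F$ is an integral (Cartier) divisor on $Y$, the rounding identity
\[
\lfloor \pi^*K_X + F\rfloor = \lfloor \pi^*K_X\rfloor + F
\]
holds exactly, because adding an integral divisor commutes with taking the round-down. Hence the two sheaves coincide and the containment
\[
\mathcal{J}(X,|V|)\;\supseteq\;\mathcal{J}(X,\mathcal{O}_X)\cdot\overline{\frak{b}(|V|)}
\]
follows. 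I expect the main obstacle to be bookkeeping rather than conceptual: one must be careful that the base-ideal $\frak{b}(|V|)$ is generated by the sections defining $|V|$ so that its integral closure pulls back to precisely $\mathcal{O}_Y(-F)$, and that $F$ is genuinely integral so the rounding splits cleanly. A secondary subtlety is verifying that a single $\pi$ can be taken to resolve both $|V|$ and $\frak{b}(|V|)$, but this is immediate since they cut out the same scheme-theoretic base locus.
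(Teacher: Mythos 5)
Your proof is correct and is essentially the paper's own argument: both pass to a common log resolution, use that $F$ is integral so that $\lfloor \pi^*K_X+F\rfloor=\lfloor \pi^*K_X\rfloor+F$, and invoke the standard relation between $\overline{\frak{b}(|V|)}$ and $\mathcal{O}_Y(-F)$ to turn the product of ideals into a pushforward from $Y$. The only cosmetic difference is that the paper states this relation as $f_*\mathcal{O}_Y(-F)=\overline{\frak{b}(|V|)}$ and multiplies pushforwards, whereas you pull $\overline{\frak{b}(|V|)}$ back to $Y$ and push the product down; these are two formulations of the same integral-closure fact.
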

\begin{proof}
Take a common resolution $f: Y\rightarrow X$ of $X$ and $|V|$. Put
\[f^*|V|=|W|+F,\]
where $|W|$ is base point free and $F$ is the fixed part of $f^*|V|$ such that the divisor $F\cup \text{Ex}(f)$ is simple normal crossing. The natural morphism
\[f_*\sshf{Y}(K_Y-\lfloor{f^*K_X}\rfloor)\otimes f_*\sshf{Y}(-F)\rightarrow f_*\sshf{Y}(K_Y-\lfloor{f^*K_X+F}\rfloor)\]
yields the inclusion
\[f_*\sshf{Y}(K_Y-\lfloor{f^*K_X}\rfloor)\cdot f_*\sshf{Y}(-F)\subseteq f_*\sshf{Y}(K_Y-\lfloor{f^*K_X+F}\rfloor),\]
that is
\[\mathcal{J}(X, \sshf{X})\overline{\frak{b}(|V|)}\subseteq \mathcal{J}(X, |V|).\]
\end{proof}

The following theorem is Nadel vanishing for pairs (see, e.g,~\cite[Theorem 9.4.17]{Laz04b} or~\cite[Theorem 3.2]{Fuj11}).

\begin{theorem}\label{NV}
Let $(X,\Delta)$ be a log pair and $L$ a Cartier divisor on $X$ such that $L-(K_X+\Delta)$ is a big and nef $\qq$-Cartier divisor. Then
\[
H^i(X, \mathcal{O}_X(L)\otimes \mathcal{J}(X,\Delta)) =0,
\]
for $i>0$.
\end{theorem}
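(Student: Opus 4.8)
The statement is the classical Nadel vanishing theorem, so the plan is to reduce it to Kawamata--Viehweg vanishing on a log resolution. First I would fix a log resolution $\pi\colon Y\to X$ of the pair $(X,\Delta)$, so that by definition $\mathcal{J}(X,\Delta)=\pi_*\sshf{Y}(K_Y-\lfloor\pi^*(K_X+\Delta)\rfloor)$ and, crucially, the support of the fractional part of $\pi^*(K_X+\Delta)$ is contained in the simple normal crossing divisor ${\rm Ex}(\pi)\cup\pi^{-1}_*(\Delta)$. Since $L$ is Cartier, the projection formula gives
\[
\sshf{X}(L)\otimes\mathcal{J}(X,\Delta)\cong \pi_*\sshf{Y}\big(K_Y-\lfloor\pi^*(K_X+\Delta)\rfloor+\pi^*L\big).
\]

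Next I would invoke local vanishing for multiplier ideals, namely $R^j\pi_*\sshf{Y}(K_Y-\lfloor\pi^*(K_X+\Delta)\rfloor)=0$ for $j>0$ (itself a relative Kawamata--Viehweg / Grauert--Riemenschneider statement, see~\cite{Laz04b}). Tensoring this with the locally free sheaf $\pi^*\sshf{X}(L)$ shows that the higher direct images of the sheaf displayed above also vanish. The Leray spectral sequence then degenerates and yields, for every $i$,
\[
H^i\big(X,\sshf{X}(L)\otimes\mathcal{J}(X,\Delta)\big)\cong H^i\big(Y,\sshf{Y}(K_Y-\lfloor\pi^*(K_X+\Delta)\rfloor+\pi^*L)\big).
\]
It therefore suffices to prove the vanishing on the smooth variety $Y$.

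To finish, I would rewrite the twisting divisor as $K_Y+\lceil D\rceil$, where $D:=\pi^*\big(L-(K_X+\Delta)\big)$; the identity $-\lfloor\pi^*(K_X+\Delta)\rfloor+\pi^*L=\lceil D\rceil$ holds because $\pi^*L$ is integral and $\lceil -x\rceil=-\lfloor x\rfloor$. Since $L-(K_X+\Delta)$ is big and nef and $\pi$ is birational, $D$ is again big and nef: nefness is preserved under pullback, and bigness is preserved because $\pi$ is birational (so the volume is unchanged). Its fractional part $\{D\}=\{-\pi^*(K_X+\Delta)\}$ has simple normal crossing support by the choice of log resolution. Kawamata--Viehweg vanishing then gives $H^i(Y,\sshf{Y}(K_Y+\lceil D\rceil))=0$ for $i>0$, which is exactly what is needed.

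The result is well known, so no single step is a genuine obstacle; the only points requiring care are the local vanishing $R^j\pi_*=0$ in the second step and the verification that pulling back by the birational morphism $\pi$ preserves bigness of $L-(K_X+\Delta)$, together with the bookkeeping ensuring that the fractional part $\{D\}$ remains simple normal crossing so that Kawamata--Viehweg applies.
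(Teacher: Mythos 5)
Your proof is correct: the chain of reductions---log resolution, the projection formula, local vanishing $R^j\pi_*\mathcal{O}_Y(K_Y-\lfloor\pi^*(K_X+\Delta)\rfloor)=0$, the Leray spectral sequence, and finally Kawamata--Viehweg vanishing for the big and nef $\mathbb{Q}$-divisor $D=\pi^*(L-(K_X+\Delta))$ with simple normal crossing fractional part---is the standard argument, and each step (including the round-up identity and the preservation of bigness under birational pullback) is justified correctly. The paper itself gives no proof of this theorem but simply cites \cite[Theorem 9.4.17]{Laz04b} and \cite[Theorem 3.2]{Fuj11}, and your argument is essentially the proof appearing in the first of those references, so you have correctly supplied the details the paper leaves to the literature.
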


\begin{theorem}\label{Nadel Vanishing}
Let $X$ be a normal $\mathbb{Q}$-Gorenstein projective variety, $c\in\mathbb{Q}_{+}$, and  $M$ a Cartier divisor. Let $L$ be a Cartier divisor on $X$. Suppose that
$L-(K_X+cM)$ is big and nef.
Then
\[
H^i\left( X, \mathcal{O}_X(L)\otimes \mathcal{J}\left( X,c\cdot |M| \right) \right)=0,
\]
for $i>0$.
\end{theorem}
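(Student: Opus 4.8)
The plan is to pass to the log resolution defining $\mathcal{J}(X,c\cdot|M|)$ and to deduce the vanishing from the local and global forms of Kawamata--Viehweg vanishing, the latter via Theorem \ref{NV} applied on the resolution itself. Fix $\pi\colon Y\to X$ as in the definition of the multiplier ideal, so that $Y$ is smooth, $\pi^*|M|=|W|+F$ with $|W|$ base point free and $F+{\rm Ex}(\pi)$ of simple normal crossing support, and $\mathcal{J}(X,c\cdot|M|)=\pi_*\sshf{Y}(K_Y-\lfloor \pi^*K_X+cF\rfloor)$. Set $D:=\pi^*K_X+cF$ and $N:=\pi^*L+K_Y-\lfloor D\rfloor$. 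Since $\{D\}$ is effective, has simple normal crossing support, and satisfies $\lfloor\{D\}\rfloor=0$, the pair $(Y,\{D\})$ is klt.

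First I would prove the local vanishing $R^j\pi_*\sshf{Y}(K_Y-\lfloor D\rfloor)=0$ for $j>0$. One computes $(K_Y-\lfloor D\rfloor)-(K_Y+\{D\})=-D=-\pi^*K_X-cF$. As $\pi^*K_X\equiv_\pi 0$, and as $\pi^*M\sim W+F$ forces $-cF\equiv_\pi cW$, this class is $\pi$-nef (because $|W|$ is base point free) and $\pi$-big (because $\pi$ is birational). Relative Kawamata--Viehweg vanishing for the klt pair $(Y,\{D\})$ then yields the claim. Combining this with the identity $\pi_*\sshf{Y}(K_Y-\lfloor D\rfloor)=\mathcal{J}(X,c\cdot|M|)$, the projection formula, and the degeneration of the Leray spectral sequence gives
\[
H^i\big(X,\sshf{X}(L)\otimes\mathcal{J}(X,c\cdot|M|)\big)\cong H^i\big(Y,\sshf{Y}(N)\big)\qquad\text{for all }i.
\]

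It then remains to show $H^i(Y,\sshf{Y}(N))=0$ for $i>0$, which I would obtain by applying Theorem \ref{NV} to the klt pair $(Y,\{D\})$, for which $\mathcal{J}(Y,\{D\})=\sshf{Y}$. The hypothesis to verify is that $N-(K_Y+\{D\})=\pi^*L-D$ be big and nef. Using $cF\sim_\qq c\pi^*M-cW$, one rewrites $\pi^*L-D\sim_\qq\pi^*\big(L-(K_X+cM)\big)+cW$. By assumption $L-(K_X+cM)$ is big and nef, hence so is its pullback under the birational morphism $\pi$, and adding the nef divisor $cW$ preserves both bigness and nefness. Thus Theorem \ref{NV} applies and gives the desired vanishing on $Y$, and hence on $X$.

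The step requiring the most care is the local vanishing: because $X$ is only $\qq$-Gorenstein, $\pi^*K_X$ is a genuine $\qq$-divisor, so one must keep track of the rounding in $\lfloor\pi^*K_X+cF\rfloor$ and establish the relative positivity of $-cF$ through the $\pi$-numerical equivalence $-cF\equiv_\pi cW$. Once this is in place, the passage to $Y$ and the application of Theorem \ref{NV} are formal.
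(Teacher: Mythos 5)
Your proof is correct, but it follows a genuinely different route from the paper's. The paper reduces to Theorem \ref{NV} on $X$ itself: following \cite[Prop.\ 9.2.26]{Laz04b}, it replaces the linear-series multiplier ideal by the multiplier ideal of an honest $\qq$-divisor, taking $k\gg 0$ and general members $A_1,\dots,A_k\in|M|$ and setting $\Delta=\tfrac{c}{k}(A_1+\cdots+A_k)$, so that $\mathcal{J}(X,c\cdot|M|)=\mathcal{J}(X,\Delta)$ and $L-(K_X+\Delta)\sim_{\qq}L-(K_X+cM)$ is big and nef; Theorem \ref{NV} then applies in one line. You instead unwind the definition on the log resolution $\pi\colon Y\to X$: you prove the local vanishing $R^j\pi_*\sshf{Y}(K_Y-\lfloor \pi^*K_X+cF\rfloor)=0$ via \emph{relative} Kawamata--Viehweg vanishing (an ingredient the paper never invokes, though entirely standard), descend through Leray and the projection formula, and finish by applying Theorem \ref{NV} on $Y$ to the klt pair $(Y,\{D\})$, whose multiplier ideal is trivial. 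Your approach buys self-containedness: it avoids the general-members comparison lemma, which in the singular $\qq$-Gorenstein setting the paper only justifies by analogy with the smooth case, and it makes the role of the base-point-free part $|W|$ explicit in both the relative and global positivity checks. The paper's approach buys brevity and stays entirely on $X$. One point you should make explicit: the simple normal crossing support of $\{D\}$ (needed for klt-ness of $(Y,\{D\})$ and for the identity log resolution to compute $\mathcal{J}(Y,\{D\})=\sshf{Y}$) rests on the convention $\pi_*K_Y=K_X$, so that $K_Y-\pi^*K_X$ is exceptionally supported and hence the fractional part of $D=\pi^*K_X+cF$ lives on $F+{\rm Ex}(\pi)$, which is SNC by the choice of $\pi$; you allude to this rounding issue at the end but do not state the convention that resolves it.
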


\begin{proof}
Similar to the proof of \cite[Prop 9.2.26]{Laz04b}, we can find $k$ large enough such that for general elements $A_1,\dots, A_k \in |M|$
the effective $\qq$-divisor $\Delta=\frac{c}{k}(A_1+\dots+A_k)$ holds that $\mathcal{J}(X,c\cdot |M|)=\mathcal{J}(X,\Delta)$.
Therefore the assertion follows from Theorem~\ref{NV}.
\end{proof}

\section{Inner projection for varieties with mild singularities}\label{innermild}

The main result of this section is Theorem~\ref{bpf}, which is a slight generalization of \cite[Theorem 1]{Noma14}.
In the singular case, the proof is similar by using a double-point formula for normal varieties (see Lemma~\ref{dpf}).
First, we introduce some notation and recall some lemmas which are proved in~\cite[\S 1 and \S 2]{Noma14}.

\begin{notation}
Consider a non-degenerate projective variety $X\subseteq \mathbb{P}^N$ of dimension $n$ and codimension $e$.
Suppose $n\geq 2$ and let $1\leq m\leq e-1$ be an integer number.
Consider $x_1,\dots, x_m \in X$ be general points and $\Lambda=\langle x_1,\dots, x_m\rangle$ its linear span.
Let $\pi_\Lambda \colon \mathbb{P}^N \setminus \Lambda \rightarrow \pp^{N-m}$ be the linear projection from $\Lambda$,
and let $\pi_{X,\Lambda}\colon X\setminus \Lambda \rightarrow \pp^{N-m}$ be the restriction of this morphism to $X$,
we denote the closure of the image of $\pi_{X,\Lambda}$ by $\overline{X}_\Lambda$.
The induced morphism $X\setminus \Lambda \rightarrow \overline{X}_\Lambda$ will be denoted by $\pi'_{\Lambda,X}$.
By ~\cite[Lemma 1.2]{Noma14}, $X\cap \Lambda = \{x_1,\dots, x_m\}$ holds scheme-theoretically,
therefore if we blow-up the points $x_1,\dots, x_m$, we obtain a projective morphism $\sigma \colon \hat{X}\rightarrow X$.
The rational map $\pi_{\Lambda,X}$ extends to a morphism
\[
\hat{\pi}_{\Lambda,X} \colon \hat{X}\rightarrow \mathbb{P}^{N-m}
\]
such that $\hat{\pi}_{\Lambda,X}=\pi_{\Lambda,X}\circ \sigma$ as rational maps.
The map $\pi_{\Lambda,X}$ is called the {\em induced projection} and
$\hat{\pi}_{\Lambda,X}$ the {\em extended projection}.
\end{notation}

\begin{lemma}\label{isomorphism}
Let $X\subseteq \prj{N}$ be a nondegenerate projective variety of codimension $e\geq 2$.
Let $m$ be a positive integer with $1\leq m \leq e-1$.
Let $x$ be a point of ${\rm Sm}(X)\setminus \mathcal{C}(X)$. Then for general points $x_1,\dots, x_m \in X$ and
$\Lambda =\langle x_1,\dots, x_m \rangle$, the induced projection $\pi'_{\Lambda, X}$ is an isomorphism at $x$.
\end{lemma}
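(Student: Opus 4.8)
The plan is to reduce ``isomorphism at $x$'' to two separate conditions and to verify each for general $x_1,\dots,x_m$. Let $\hat\pi_{\Lambda,X}\colon\hat X\to\overline{X}_\Lambda$ be the extended projection and $\hat x\in\hat X$ the point lying over $x$; this point is unique and no blow-up occurs near it, since the general $x_i$ differ from $x$. I claim it suffices to prove that \emph{(i)} $\hat\pi_{\Lambda,X}$ is unramified at $\hat x$, and \emph{(ii)} the set-theoretic fibre $(X\cap\langle\Lambda,x\rangle)\setminus\Lambda$ equals $\{x\}$. Indeed, since $\hat X$ is smooth at $\hat x\cong x\in{\rm Sm}(X)$, condition (i) makes $\hat\pi_{\Lambda,X}$ a local immersion at $\hat x$, hence an isomorphism onto a locally closed subscheme of $\overline{X}_\Lambda$; as $\overline{X}_\Lambda$ is irreducible with $\dim\overline{X}_\Lambda=\dim X$, this image is open. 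Condition (ii) then guarantees (using that $\hat\pi_{\Lambda,X}$ is proper, so finite near a one-point fibre) that no other point of $\hat X$ maps to $\hat\pi_{\Lambda,X}(\hat x)$, so the local immersion is genuinely an isomorphism onto an open neighbourhood in $\overline{X}_\Lambda$ rather than merely an immersion.

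For (i), the unramifiedness is a pure general-position statement that does not involve $\mathcal{C}(X)$. The kernel of the differential of $\pi_\Lambda$ at $x$ consists of the tangent directions pointing from $x$ into $\Lambda$, so $\pi_\Lambda$ is unramified at $x$ exactly when the embedded projective tangent space $\mathbb{T}_xX$ meets the fibre $\langle\Lambda,x\rangle$ only at $x$; an elementary linear-algebra check shows that this is equivalent to $\Lambda\cap\mathbb{T}_xX=\emptyset$. Since $\dim\mathbb{T}_xX=n$ and $\dim\Lambda=m-1$ with $(m-1)+n\le(e-2)+n=N-2<N$, a general $(m-1)$-plane is disjoint from $\mathbb{T}_xX$; because $X$ is non-degenerate one can realise such a plane by general points $x_1,\dots,x_m$, chosen one at a time so that each $x_i$ avoids the proper linear subspace $\langle\mathbb{T}_xX,x_1,\dots,x_{i-1}\rangle$. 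This yields (i).

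The substance of the lemma is the injectivity statement (ii), and here the hypothesis $x\notin\mathcal{C}(X)$ is decisive. Let $p_x\colon X\setminus\{x\}\to X'\subseteq\mathbb{P}^{N-1}$ be the projection from the point $x$, with $X'$ the closure of its image. For $x\in{\rm Sm}(X)$ one has $x\notin\mathcal{C}(X)$ if and only if $p_x$ is birational onto $X'$: the definition of $\mathcal{C}(X)$ says precisely that the general secant through $x$ is a $2$-secant, i.e.\ ${\rm length}(X\cap\langle x,y\rangle)=2$ for general $y$. Now $\langle\Lambda,x\rangle\setminus\{x\}=p_x^{-1}(\langle x_1',\dots,x_m'\rangle)$ with $x_i':=p_x(x_i)$, so any point of the fibre $(X\cap\langle\Lambda,x\rangle)\setminus\Lambda$ other than $x$ maps under the birational morphism $p_x$ to a point of $X'\cap\langle x_1',\dots,x_m'\rangle$ distinct from all the $x_i'$. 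Thus (ii) will follow once we know that the $(m-1)$-plane spanned by the $m$ general points $x_1',\dots,x_m'$ of the non-degenerate variety $X'$, which has codimension $e-1\ge m$ in $\mathbb{P}^{N-1}$, meets $X'$ in no point besides $x_1',\dots,x_m'$.

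It remains to prove this general-position statement, which I would establish in the form: \emph{if $V\subseteq\mathbb{P}^M$ is non-degenerate of codimension $c\ge m$, then the span of $m$ general points of $V$ meets $V$ in exactly those points}, and I would do so by induction on $m$. The case $m=1$ is trivial. For the inductive step, the key point—and the step I expect to need the most care—is that a general point of $V$ is not a non-birational centre: by the theorem of Segre recalled in \S\ref{NomaClass} (see \cite{Segre36}), $\overline{\mathcal{C}}(V)$ is a union of proper linear subspaces, so $\overline{\mathcal{C}}(V)\ne V$ and a general $v_1\in V$ satisfies $v_1\notin\mathcal{C}(V)$; hence $p_{v_1}\colon V\setminus\{v_1\}\to V_1$ is birational onto a non-degenerate $V_1\subseteq\mathbb{P}^{M-1}$ of codimension $c-1\ge m-1$. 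Given general $v_1,\dots,v_m$ and a supposed extra point $z\in V\cap\langle v_1,\dots,v_m\rangle$ with $z\ne v_i$, projecting from $v_1$ sends $z$ into $V_1\cap\langle p_{v_1}(v_2),\dots,p_{v_1}(v_m)\rangle$; the inductive hypothesis applied to the $m-1$ general points $p_{v_1}(v_2),\dots,p_{v_1}(v_m)$ of $V_1$ forces $p_{v_1}(z)=p_{v_1}(v_j)$ for some $j\ge2$, and birationality of $p_{v_1}$ then gives $z=v_j$, a contradiction. This closes the induction and establishes (ii), completing the proof. The two structural inputs used here—the equivalence $x\notin\mathcal{C}(X)\Leftrightarrow p_x$ birational and the linearity of $\overline{\mathcal{C}}(V)$—are exactly the facts recalled in \S\ref{NomaClass}.
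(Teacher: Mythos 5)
Note first that the paper itself does not prove this lemma at all; it is recalled verbatim from \cite{Noma14}*{\S\S 1--2}, so your argument has to stand entirely on its own merits --- and it has a genuine gap in the very first reduction. You claim that (i) unramifiedness at $x$ together with (ii) $(X\cap\langle\Lambda,x\rangle)\setminus\Lambda=\{x\}$ guarantee that ``no other point of $\hat{X}$ maps to $\hat{\pi}_{\Lambda,X}(\hat{x})$''. This is false: $\hat{X}$ contains the exceptional divisors $E_1,\dots,E_m$ over the centers $x_1,\dots,x_m$, and a point of $E_i$ (a tangent direction at $x_i$) maps to $\bar{x}$ exactly when the corresponding tangent line of $X$ at $x_i$ lies inside $\langle\Lambda,x\rangle$, i.e.\ when $\mathbb{T}_{x_i}X\cap\langle\Lambda,x\rangle\neq\{x_i\}$, where $\mathbb{T}_{x_i}X$ denotes the embedded tangent space. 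Condition (ii) says nothing about these points, and they do destroy the conclusion: already for $m=1$ and a curve $X\subseteq\mathbb{P}^3$, if the tangent line of $X$ at $x_1$ happens to pass through $x$ (and meets $X$ nowhere else), then (i) and (ii) both hold, yet $\overline{X}_\Lambda$ acquires a second branch at $\bar{x}$ --- the image of the branch of $X$ at $x_1$ --- exactly as for the normalization of a nodal curve, so no Zariski neighbourhood of $\bar{x}$ in $\overline{X}_\Lambda$ is isomorphic to a neighbourhood of $x$. This strong sense of ``isomorphism at $x$'' is the one needed downstream (it is what makes $x\notin{\rm supp}(D-E)$ possible in Lemma~\ref{dpf} and Theorem~\ref{bpf}). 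So your proof is missing a third condition, $\mathbb{T}_{x_i}X\cap\langle\Lambda,x\rangle=\{x_i\}$ for every $i$, together with a proof that it holds for general $x_1,\dots,x_m$: the essential point is that the fixed point $x$ can lie on $\mathbb{T}_{x_i}X$ for \emph{general} $x_i$ only if $X$ is a cone with vertex $x$ (characteristic $0$), which is impossible since $x\in{\rm Sm}(X)$ and $X$ is non-degenerate of codimension $\geq 2$; one then adds the remaining points $x_j$ by a dimension count as in your step (i). This tangency analysis at the centers is a genuine part of Noma's lemma and cannot be skipped.

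Two further, more easily repaired, defects. First, even granting full control of the fibre, unramifiedness at a point does not make a morphism a Zariski-local immersion (the normalization of a nodal cubic is unramified, indeed injective, near either preimage of the node, but is not an immersion there); the correct route is: proper morphism whose \emph{entire} scheme-theoretic fibre over $\bar{x}$ is the single reduced point $\hat{x}$ $\Rightarrow$ closed immersion over a neighbourhood of $\bar{x}$ $\Rightarrow$ isomorphism onto an open set, by comparing dimensions inside the irreducible reduced $\overline{X}_\Lambda$. Your order of operations (immersion first, then injectivity) cannot be made rigorous Zariski-locally. Second, in two places you use birationality of a projection as if it were injectivity: when you assert that $p_x(z)$ is ``distinct from all the $x_i'$'', and in ``birationality of $p_{v_1}$ then gives $z=v_j$''. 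Both steps instead need the sharper fact you yourself recorded: for a smooth point $u\notin\mathcal{C}(V)$ and a general point $v\in V$, the secant $\langle u,v\rangle$ meets $V$ in a scheme of length two, hence carries no third point of $V$. These last issues are patchable with tools already in your write-up; the exceptional-divisor gap in the first paragraph is the substantive one.
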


Now, we turn to a version of the birational double-point formula for projective normal varieties.

\begin{lemma}\label{dpf}
Consider a morphism
\[
f\colon X \rightarrow \mathbb{P}^{n+1}
\]
from a normal projective variety $X$ of dimension $n$.
Assume that $f$ maps $X$ birationally onto a hypersurface $Y\subseteq \mathbb{P}^{n+1}$. Then there exist effective Weil divisors $D$ and $E$ on $X$,
where $E$ is $f$-exceptional, such that
\[
f^*(K_{\mathbb{P}^{n+1}}+Y) - K_{X} \sim  D-E.
\]
Moreover, if $f \colon X \rightarrow Y$ is isomorphic at $x\in X$
and $x$ is in the smooth locus of $X$,
then one can choose $D$ such that $x\not \in {\rm supp}(D-E)$.
\end{lemma}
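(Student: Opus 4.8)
The plan is to realize the class $f^*(K_{\mathbb P^{n+1}}+Y)-K_X$ by a \emph{single} rational top form and to read off $D$ and $E$ from the coefficients of its divisor, working one prime divisor at a time. Since $Y\subseteq\mathbb P^{n+1}$ is a hypersurface it is Gorenstein, so its dualizing sheaf $\omega_Y$ is invertible and adjunction gives $\omega_Y\cong\sshf{Y}(K_{\mathbb P^{n+1}}+Y)$; in particular $(K_{\mathbb P^{n+1}}+Y)|_Y$ is a Cartier canonical divisor $K_Y$ on $Y$, and $f^*K_Y$ is an honest Cartier divisor on $X$ (so no $\qq$-Cartier issue arises, even though $f$ may contract divisors onto its image). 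Because $f$ is birational it identifies the one-dimensional $k(Y)$-space of rational $n$-forms with that of $X$; I fix a nonzero such form $\rho$ on $Y$ and set $\eta=f^*\rho$. Viewing $\rho$ as a rational section of the invertible sheaf $\omega_Y$ yields an actual divisor $K_Y=\operatorname{div}_{\omega_Y}(\rho)$, while $\eta$ as a rational section of $\omega_X$ yields an actual canonical divisor $K_X=\operatorname{div}_X(\eta)$. I then put $\Xi:=f^*K_Y-K_X$, an honest Weil divisor representing $f^*(K_{\mathbb P^{n+1}}+Y)-K_X$; it is independent of the choice of $\rho$, since two choices differ by a rational function whose principal divisor cancels in the difference.

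The crux is the following coefficient estimate: every prime divisor $P$ on $X$ that is \emph{not} $f$-exceptional occurs in $\Xi$ with coefficient $\ge 0$. This is a codimension-one statement, so I localize at the generic point of $P$, where $\sshf{X,\eta_P}$ is a DVR dominating the one-dimensional Gorenstein local ring $A=\sshf{Y,f(\eta_P)}$. If $Y$ is normal (hence regular) at this point, then $f$ is a local isomorphism of DVRs, $\rho$ carries a local generator of $\omega_Y$ to a local generator of $\omega_X$, and the coefficient is $0$. Otherwise $\sshf{X,\eta_P}$ is a localization of the normalization $\bar A$ of $A$, and the relevant quantity is the order along $P$ of the conductor ideal $\mathfrak c=\Hom_A(\bar A,A)$. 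Here the Gorenstein identity $\omega_A\cong A$ feeds the duality $\omega_{\bar A}\cong\Hom_A(\bar A,\omega_A)\cong\mathfrak c$; since $\mathfrak c$ is an honest ideal of $\bar A$, the coefficient of $\Xi$ along $P$ equals $\ord_P(\mathfrak c)\ge 0$. This conductor effectivity — the content of the birational double-point formula — is the step I expect to be the genuine obstacle, and it is precisely where the hypersurface (Gorenstein) hypothesis is used.

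Granting the estimate, I write $\Xi=D-E$ with $D$ the positive part and $E$ the negative part, which have disjoint supports; both are effective Weil divisors, and by the estimate every component of $E$ is $f$-exceptional, giving the asserted equivalence (indeed an equality of these representatives). For the final assertion, suppose $f$ is an isomorphism at a smooth point $x\in X$. Then $f$ maps a neighbourhood $U\ni x$ isomorphically onto a smooth open $V\subseteq Y$, and on $U$ the comparison of top forms under an isomorphism of smooth varieties is exact, so $\Xi|_U=0$. Consequently no component of $\Xi$ meets $U$; in particular $x\notin{\rm supp}(\Xi)={\rm supp}(D-E)$, as required.
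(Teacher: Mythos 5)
Your proof is correct, but it takes a genuinely different route from the paper's. The paper reduces to the smooth case: it picks a resolution $\pi\colon X'\to X$ that is an isomorphism over a neighbourhood of the smooth point $x$, applies the smooth-variety birational double-point formula \cite[Lemma 10.2.8]{Laz04b} to $f\circ\pi\colon X'\to\P^{n+1}$ to get $D'-E'$ avoiding $x'=\pi^{-1}(x)$, and then pushes the linear equivalence forward via $\pi$, choosing $K_{X'}$ with $\pi_*K_{X'}=K_X$; effectivity, the $f$-exceptionality of $E=\pi_*E'$, and the support condition at $x$ all survive the pushforward. You instead work intrinsically on the normal variety $X$: you realize the class by the single Weil divisor $\Xi=f^*K_Y-K_X$ attached to one rational top form, and prove the key coefficient estimate at each non-$f$-exceptional prime divisor $P$ by localizing at its generic point, where $X\to Y$ factors through the normalization of $Y$ (and is an isomorphism there in codimension one by Zariski's main theorem), and by the finite-duality/conductor identity $\omega_{\bar A}\cong\Hom_A(\bar A,\omega_A)\cong\mathfrak{c}$, with $\omega_A\cong A$ precisely because a hypersurface is Gorenstein. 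In effect you have inlined, and generalized to normal $X$, the conductor argument that underlies Lazarsfeld's smooth-case lemma, rather than quoting that lemma as a black box. What your approach buys: no resolution of singularities, an equality of explicit representatives rather than a mere linear equivalence, and a transparent view of where the Gorenstein hypothesis enters; the cost is that the codimension-one local comparisons (finiteness of $\bar A$ over $A$, the factorization through the normalization, and the compatibility of $\omega_X$ at $\eta_P$ with the localized $\omega_{\bar A}$ under finite duality) must be justified, and these are standard but are exactly the points your write-up compresses most. The paper's route is shorter given the literature, pushing all singular-variety subtleties into the pushforward formalism.
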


\begin{proof}
Consider a smooth point $x\in X$ such that the birational morphism
$f \colon X\rightarrow Y$ is an isomorphism at $x$.
Take $\pi \colon X'\rightarrow X$ a resolution of singularities
of $X$ which does not blow-up centers containing the point $x\in X$.
Observe that if $\phi \colon X\rightarrow Y$ is an isomorphism at $x\in X$, then $\phi \circ f \colon X'\rightarrow Y$ is an isomorphism at the pre-image $x'=\pi^{-1}(x)$ too.
Moreover, we can choose a canonical divisor $K_{X'}$ on $X'$ such that $\pi_*(K_{X'})=K_X$.
Hence applying the birational double-point formula \cite[Lemma 10.2.8]{Laz04b} for the morphism
\[
f\circ \pi \colon X' \rightarrow \pp^{n+1},
\]
we deduce that there exist effective divisors $D'$
and $E'$ on $X'$, where $E'$ is $(f\circ \pi)$-exceptional, such that
\[
(f\circ \pi)^*(K_{\pp^{n+1}} + Y )-K_{X'}\sim D'-E',
\]
and the support of $D'-E'$ does not contain the point $x'$.
Therefore, pushing forward the above linear equivalence via $\pi$ we obtain
\[
f^*(K_{\pp^{n+1}}+Y) - \pi_*(K_{X'}) \sim \pi_*(D')-\pi_*(E').
\]
Since the divisor $E=\pi_*(E')$ is $f$-exceptional
and $D=\pi_*(D')$ does not contain $x$ in its support, we conclude that
\[
f^*(K_{\pp^{n+1}}+Y) - K_{X} \sim D-E,
\]
where $D-E$ is a divisor which does not contain $x\in X$ in its support.
\end{proof}

Next we collect a few lemmas from ~\cite{Noma14}, which are proved for possibly singular non-degenerate projective varieties.

\begin{lemma}\label{reduced}
Let $2\leq m \leq e$ be an integer. For general points $x_1,\dots, x_m\in X\subseteq \mathbb{P}^N$
and its linear span $\Lambda \subseteq \mathbb{P}^N$, we have that
\[
{\rm length}( X \cap \Lambda ) = m,
\]
which means that we have a scheme-theoretic equality $X\cap \Lambda= \{x_1,\dots,x_m\}$,
where the latter set is considered with the reduced scheme structure.
\end{lemma}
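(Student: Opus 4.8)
The plan is to split the equality $\operatorname{length}(X\cap\Lambda)=m$ into a set-theoretic assertion, that $X\cap\Lambda=\{x_1,\dots,x_m\}$ with no further points, and a scheme-theoretic one, that the intersection is reduced of length $1$ at each $x_i$. Two preliminary remarks set the stage. First, since $X$ is irreducible and non-degenerate while $\overline{\mathcal{C}}(X)$ is a union of proper linear subspaces, one has $\overline{\mathcal{C}}(X)\subsetneq X$, so general points lie in ${\rm Sm}(X)\setminus\mathcal{C}(X)$; moreover general points of a non-degenerate variety are in linearly general position, so $\dim\Lambda=m-1$ and ${\rm length}(X\cap\Lambda)\ge m$. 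Second, the base case $m=2$ is immediate from the definition of $\mathcal{C}(X)$: as $x_1\notin\mathcal{C}(X)$, the general secant line $\langle x_1,x_2\rangle$ meets $X$ in length $\le 2$, hence in exactly the reduced pair $\{x_1,x_2\}$.

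For the set-theoretic statement I would induct on $m$, uniformly over all non-degenerate $X$ with $2\le m\le\codim X$. Project from the single general point $x_1$ to obtain $\pi\colon X\setminus\{x_1\}\to X_1:=\overline{X}_{\langle x_1\rangle}\subseteq\mathbb{P}^{N-1}$. By the base case the general fibre of $\pi$ is a single reduced point, so $\pi$ is birational onto $X_1$, and $X_1$ is non-degenerate of dimension $n$ and codimension $e-1$ (a hyperplane of $\mathbb{P}^{N-1}$ containing $X_1$ would pull back to a hyperplane of $\mathbb{P}^N$ containing $X$). The images $\pi(x_2),\dots,\pi(x_m)$ are general points of $X_1$ spanning $\pi(\Lambda)$, of dimension $m-2$. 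Now take $y\in X\cap\Lambda$ with $y\ne x_1$. Then $\pi(y)\in X_1\cap\pi(\Lambda)$, which by the inductive hypothesis applied to $X_1$ equals $\{\pi(x_2),\dots,\pi(x_m)\}$; hence $\pi(y)=\pi(x_j)$ for some $j$, i.e.\ $y$ lies on the secant line $\langle x_1,x_j\rangle$. The base case for the general pair $x_1,x_j$ gives $X\cap\langle x_1,x_j\rangle=\{x_1,x_j\}$, so $y=x_j$. Thus $X\cap\Lambda=\{x_1,\dots,x_m\}$ as sets.

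For reducedness at a fixed $x_i$ it suffices to show that the embedded projective tangent space $\mathbb{T}_{x_i}X$ (of dimension $n$) meets $\Lambda$ only in $x_i$: the Zariski tangent space of $X\cap\Lambda$ at $x_i$ is then contained in $\mathbb{T}_{x_i}X\cap\Lambda=\{x_i\}$, forcing a reduced point and hence length $1$. To check $\mathbb{T}_{x_i}X\cap\Lambda=\{x_i\}$ I would project from $\mathbb{T}_{x_i}X$, giving $\rho\colon\mathbb{P}^N\setminus\mathbb{T}_{x_i}X\to\mathbb{P}^{e-1}$ whose restriction to $X$ has non-degenerate image (otherwise $X$ would lie in a hyperplane containing $\mathbb{T}_{x_i}X$). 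Since $X\not\subseteq\mathbb{T}_{x_i}X$, the general points $x_j$ for $j\ne i$ avoid $\mathbb{T}_{x_i}X$, and their images $\rho(x_j)$ are $m-1\le e-1$ general points of a non-degenerate variety in $\mathbb{P}^{e-1}$, hence linearly independent. Therefore $\rho(\Lambda)=\langle\rho(x_j):j\ne i\rangle$ has dimension $m-2$; comparing with the projection formula $\dim\rho(\Lambda)=(m-1)-\dim(\Lambda\cap\mathbb{T}_{x_i}X)-1$ forces $\dim(\Lambda\cap\mathbb{T}_{x_i}X)=0$, i.e.\ $\Lambda\cap\mathbb{T}_{x_i}X=\{x_i\}$. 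Combined with the previous paragraph this yields ${\rm length}(X\cap\Lambda)=m$.

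The main obstacle is the global control in the set-theoretic step: the hypotheses at our disposal — that a general point is not a non-birational center, equivalently that a general secant is not a trisecant — are generic and essentially one-point-at-a-time, whereas the conclusion forbids any unexpected meeting of the whole plane $\Lambda$ with $X$. The decisive device is to route every putative extra point, via a single projection from $x_1$, back onto an honest secant line $\langle x_1,x_j\rangle$, where the base case applies and closes the induction. The remaining points to watch are routine but necessary: that projection from a point of $X$ preserves non-degeneracy and birationality, so the inductive hypothesis is legitimately available for $X_1$, and that general points of a non-degenerate variety are in linearly general position, which simultaneously pins down $\dim\Lambda$ and supplies the tangent-space transversality behind reducedness.
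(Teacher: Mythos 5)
Your proof is correct, but there is nothing in the paper to compare it against line by line: the paper does not prove this lemma at all, it imports it verbatim, introduced by ``we collect a few lemmas from \cite{Noma14}, which are proved for possibly singular non-degenerate projective varieties.'' So the real comparison is between your self-contained argument and a citation. Your architecture --- base case $m=2$ from the non-density of $\mathcal{C}(X)$, which follows from Segre's linearity theorem (stated in the paper); induction on $m$ via inner projection from $x_1$, routing any extra point of $X\cap\Lambda$ onto an honest secant line; and reducedness at each $x_i$ via the transversality $\mathbb{T}_{x_i}X\cap\Lambda=\{x_i\}$, obtained by projecting from the embedded tangent space and counting dimensions --- is the standard route to this statement and is close in spirit to Noma's original argument. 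Two steps deserve slightly more care than you give them, though both are routine: (a) passing from $x_1\notin\mathcal{C}(X)$ to ``the general secant through $x_1$ meets $X$ in length exactly $2$'' uses constructibility of the locus $\{x\in X:\operatorname{length}(X\cap\langle x_1,x\rangle)\geq 3 \text{ or } \dim(X\cap\langle x_1,x\rangle)\geq 1\}$ (properness of the incidence variety over $X$ plus upper semicontinuity of fibre length on the finite locus), so that failure of the bad locus to be dense gives genericity of its complement; (b) both the projection $\pi$ and the tangent space $\mathbb{T}_{x_i}X$ depend on previously chosen general points, so ``images of general points of $X$ are general points of $X_1$'' and ``$x_j\notin\mathbb{T}_{x_i}X$'' must be made uniform, e.g.\ by working with the generic point of $X^m$ or with incidence varieties fibred over the parameter $x_i$; as literally stated, the inductive hypothesis is being invoked against a bad locus that moves with $x_1$. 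With these standard justifications supplied, your argument is complete; what it buys over the paper's treatment is precisely self-containedness, at the cost of reproving what the authors deliberately outsource to \cite{Noma14}.
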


\begin{lemma}\label{exceptional}
Assume $n\geq 2$ and let $1\leq m\leq e-1$ be a positive integer.
Let $x_1,\dots, x_m\in X$ be general points and $\Lambda = \langle x_1,\dots, x_m \rangle$ be the linear span.
The set
\[
\hat{E}_{x_1,\dots,x_m} := \{ z\in \hat{X} \mid \dim( \hat{\pi}^{-1}_{\Lambda,X}( \hat{\pi}_{\Lambda,X}(z)) \geq 1) \}
\]
is a closed subset of $\hat{X}$, and we have
\[
\sigma( \hat{E}_{x_1,\dots, x_m}) \supseteq E_{x_1,\dots, x_m}(X) \supseteq \sigma( \hat{E}_{x_1,\dots,x_m} \setminus \{x_1,\dots, x_m\}).
\]
In particular, $\dim( E_{x_1,\dots,x_m}(X))\geq n-1$ if and only if $\dim(\hat{E}_{x_1,\dots, x_m})\geq n-1$.
\end{lemma}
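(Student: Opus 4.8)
The plan is to treat the three assertions in turn, reducing everything to the behaviour of the extended projection $\hat{\pi}_{\Lambda,X}$ over the exceptional locus of $\sigma$.

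First I would settle closedness. Since $\hat{X}$ is projective, $\hat{\pi}_{\Lambda,X}\colon \hat{X}\to \pp^{N-m}$ is proper. By upper semicontinuity of the local fibre dimension, the set $S=\{z\in \hat{X}\mid \dim_z \hat{\pi}_{\Lambda,X}^{-1}(\hat{\pi}_{\Lambda,X}(z))\ge 1\}$ is closed; as $\hat{\pi}_{\Lambda,X}$ is proper, hence closed, its image $B=\hat{\pi}_{\Lambda,X}(S)\subseteq \pp^{N-m}$ is closed. A point of $\hat{X}$ lies in a positive-dimensional fibre exactly when its image lies in $B$, so $\hat{E}_{x_1,\dots,x_m}=\hat{\pi}_{\Lambda,X}^{-1}(B)$, which is therefore closed.

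Next I would establish the two inclusions. Write $\Sigma=\sigma^{-1}(\{x_1,\dots,x_m\})$ for the exceptional locus; by construction $\sigma$ restricts to an isomorphism $\hat{X}\setminus\Sigma\xrightarrow{\ \sim\ }X\setminus\{x_1,\dots,x_m\}$ under which $\hat{\pi}_{\Lambda,X}$ becomes $\pi_{\Lambda,X}$. For $w\in \pp^{N-m}$ let $M_w\supseteq \Lambda$ be the $m$-plane with $\pi_\Lambda(M_w\setminus\Lambda)=w$; since $X\cap\Lambda=\{x_1,\dots,x_m\}$ (Lemma~\ref{reduced}), the fibre of $\pi_{\Lambda,X}$ over $w$ is $(M_w\cap X)\setminus\{x_1,\dots,x_m\}$, and a point $z\in X\setminus\{x_1,\dots,x_m\}$ lies in the set whose closure is $E_{x_1,\dots,x_m}(X)$ precisely when $\dim(M_w\cap X)\ge 1$ for $w=\pi_{\Lambda,X}(z)$. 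For $\sigma(\hat{E})\supseteq E_{x_1,\dots,x_m}(X)$ I take such a $z$: its preimage under the isomorphism lies in the fibre $\hat{\pi}_{\Lambda,X}^{-1}(w)$, which then has dimension $\ge 1$, so $z\in\sigma(\hat{E})$; as $\sigma(\hat{E})$ is closed (the proper image of the closed set $\hat{E}$) it contains the closure $E_{x_1,\dots,x_m}(X)$. The reverse inclusion $E_{x_1,\dots,x_m}(X)\supseteq \sigma(\hat{E}\setminus\Sigma)$ is where the content lies: a point $\hat{z}\in\hat{E}\setminus\Sigma$ sits in a fibre $F=\hat{\pi}_{\Lambda,X}^{-1}(w)$ with $\dim F\ge 1$, and I must show this forces $\dim(M_w\cap X)\ge 1$, so that $z=\sigma(\hat{z})$ lies in the defining set.

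The crux, and the step I expect to be the main obstacle, is to control $\hat{\pi}_{\Lambda,X}$ on $\Sigma$. As the $x_i$ are general they are smooth points, so the component of $\Sigma$ over $x_i$ is $\pp(T_{x_i}X)\cong\pp^{n-1}\subseteq \pp(T_{x_i}\pp^N)=\pp^{N-1}$, and the restriction $\hat{\pi}_{\Lambda,X}|_{\Sigma}$ is the linear projection of $\pp^{n-1}$ from the $(m-2)$-plane of directions lying inside $\Lambda$ at $x_i$. The dimension count $(m-2)+(n-1)<N-1$, which using $N=n+e$ reads $m<e+2$ and so holds because $m\le e-1$, together with the general position of $\Lambda$ and of $x_i$, shows this centre is disjoint from $\pp(T_{x_i}X)$; hence $\hat{\pi}_{\Lambda,X}|_{\Sigma}$ is a finite morphism. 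Consequently every fibre $F$ with $\dim F\ge 1$ meets $\Sigma$ in only finitely many points, so $F\setminus\Sigma$ is dense in $F$ and maps isomorphically onto $(M_w\cap X)\setminus\{x_1,\dots,x_m\}$, which therefore has dimension $\ge 1$. This yields the second inclusion. Moreover, since it shows that no positive-dimensional fibre can be swallowed by $\Sigma$, it gives $\dim(\hat{E}\setminus\Sigma)=\dim\hat{E}$; combining this with the two inclusions and the fact that $\sigma$ is an isomorphism off $\Sigma$ produces the equality $\dim E_{x_1,\dots,x_m}(X)=\dim\hat{E}_{x_1,\dots,x_m}$, from which the asserted equivalence of the two conditions $\dim(\,\cdot\,)\ge n-1$ is immediate.
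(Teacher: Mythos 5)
Your proof cannot be compared against the paper's own argument, because the paper gives none: Lemma~\ref{exceptional} is quoted from \cite{Noma14} without proof. Judged on its merits, your structure is sound. The closedness argument (Chevalley semicontinuity for $S$, properness to get $B=\hat{\pi}_{\Lambda,X}(S)$ closed, and the identity $\hat{E}_{x_1,\dots,x_m}=\hat{\pi}_{\Lambda,X}^{-1}(B)$) is correct, the first inclusion is correct, and you correctly reduce the second inclusion and the dimension equivalence to the finiteness of $\hat{\pi}_{\Lambda,X}$ on the exceptional locus $\Sigma$. The genuine gap is at the step you yourself call the crux: you assert that the center $\P(T_{x_i}\Lambda)$ is disjoint from $\P(T_{x_i}X)$ because $(m-2)+(n-1)<N-1$ ``together with the general position of $\Lambda$ and of $x_i$.'' That dimension count proves disjointness only for a \emph{general} $(m-2)$-plane in $\P^{N-1}$, but $\P(T_{x_i}\Lambda)$ is not general: it is the set of directions from $x_i$ to $\langle x_1,\dots,\widehat{x_i},\dots,x_m\rangle$, and the $x_j$ are constrained to lie on $X$. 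Linear spaces attached to a variety can be badly non-generic even for general choices of points --- deficient secant varieties (e.g.\ of the Veronese surface, which is one of the relevant cases in this very paper) are the standard warning --- so ``general position'' here is exactly the statement requiring proof, namely $\Lambda\cap T_{x_i}X=\{x_i\}$, and no count supplies it.

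Fortunately the gap can be closed using a lemma you already invoke. Lemma~\ref{reduced} gives the \emph{scheme-theoretic} equality $X\cap\Lambda=\{x_1,\dots,x_m\}$; since the Zariski tangent space of a scheme-theoretic intersection is the intersection of the Zariski tangent spaces, taking tangent spaces at $x_i$ yields $T_{x_i}X\cap T_{x_i}\Lambda=0$, i.e.\ $\P(T_{x_i}X)\cap\P(T_{x_i}\Lambda)=\emptyset$. Finiteness then follows at once: each fiber of the projection of $\P^{N-1}$ from $\P(T_{x_i}\Lambda)$ is a $\P^{m-1}$ containing the center as a hyperplane, so its intersection with $\P(T_{x_i}X)$ is a linear space disjoint from that hyperplane, hence at most a point; a projective morphism with finite fibers is finite. (Alternatively, one can project from $T_{x_i}X$ and use that $m-1$ general points of the non-degenerate image variety in $\P^{e-1}$ are linearly independent, but the tangent-space argument is shorter.) With this substitution, the remainder of your proof --- the second inclusion, the claim that no positive-dimensional fiber is swallowed by $\Sigma$, and the resulting equality of dimensions --- goes through as written.
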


Now, we turn to state and prove the main theorem of this section. In order to do so, we define the linear system of a Weil divisor on a normal variety.

\begin{definition}
Let $X$ be a normal projective variety and $D$ a Weil divisor on $X$.
We define the {\em linear system associated to $D$}, denoted by $|D|$, to be the set of all effective Weil divisors on $X$ which are linearly equivalent to $D$.
If $D$ is a Cartier divisor, then $|D|$ can be endowed with the structure of a projective space, more precisely $|D|=\mathbb{P}(H^0(X, \mathcal{O}_X(D)))$.
We define the {\em base locus of $|D|$} to be the subset $b(|D|)= \bigcap_{E \in |D|}E$ of $X$.
\end{definition}

\begin{theorem}\label{bpf}
Let $X\subseteq \pp^N$ be a non-degenerate normal projective variety of dimension $n\ge 2$, codimension $e\ge 2$, and degree $d$. Let $1\le m\le e-1$ and assume that $X$ does not satisfy $(E_m)$. Then the base locus of the linear system $|(d-m-n-2)H-K_X|$ is contained in $\overline{\mathcal{C}}(X)\cup X_{\text{Sing}}$, where $H$ is the class of hyperplane sections.
\end{theorem}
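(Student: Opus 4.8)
The plan is to show that every point $x\in {\rm Sm}(X)\setminus \mathcal{C}(X)$ lies outside the base locus, by producing an effective Weil divisor linearly equivalent to $(d-m-n-2)H-K_X$ whose support misses $x$; since $X\setminus(\overline{\mathcal{C}}(X)\cup X_{\text{Sing}})\subseteq {\rm Sm}(X)\setminus\mathcal{C}(X)$, establishing this for all such $x$ forces the base locus into $\overline{\mathcal{C}}(X)\cup X_{\text{Sing}}$. Fix such an $x$ and choose general points $x_1,\dots,x_m\in X$ with span $\Lambda$, general enough that the induced projection $\pi'_{\Lambda,X}$ is an isomorphism at $x$ (Lemma~\ref{isomorphism}) and that the $x_i$ avoid $\overline{\mathcal{C}}(X)$, so that the extended projection $\hat\pi_{\Lambda,X}\colon \hat X\to \overline{X}_\Lambda$ is birational. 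Composing $\hat\pi_{\Lambda,X}$ with a general outer linear projection $\mathbb{P}^{N-m}\dashrightarrow \mathbb{P}^{n+1}$ from a center disjoint from $\overline{X}_\Lambda$ yields a morphism $\hat f\colon \hat X\to \mathbb{P}^{n+1}$, birational onto a hypersurface $Y$. Since each of the $m$ general smooth points drops the degree by one and the outer projection preserves degree, $\deg Y=d-m$, whence $K_{\mathbb{P}^{n+1}}+Y\sim (d-m-n-2)H_{\mathbb{P}^{n+1}}$.

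As $\hat X$ is the blow-up of the normal variety $X$ at smooth points, it is again normal, so Lemma~\ref{dpf} applies to $\hat f$: there are effective Weil divisors $\hat D,\hat E$ on $\hat X$ with $\hat E$ being $\hat f$-exceptional and
\[
\hat f^*(K_{\mathbb{P}^{n+1}}+Y)-K_{\hat X}\sim \hat D-\hat E .
\]
Moreover $\hat f$ is an isomorphism at the smooth point $\hat x=\sigma^{-1}(x)$, because $\pi'_{\Lambda,X}$ is an isomorphism at $x$ and a general outer projection is an isomorphism at the image of $x$; the \emph{moreover} clause of Lemma~\ref{dpf} then lets us choose $\hat D$ with $\hat x\notin {\rm supp}(\hat D-\hat E)$. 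Writing $E_1,\dots,E_m$ for the $\sigma$-exceptional divisors, one has $\hat f^*H_{\mathbb{P}^{n+1}}=\sigma^*H-\sum_i E_i$ since hyperplanes of $\mathbb{P}^{n+1}$ pull back to hyperplanes through $\Lambda$; choosing $K_{\hat X}$ with $\sigma_*K_{\hat X}=K_X$ and pushing the displayed equivalence forward by $\sigma_*$ (using $\sigma_*\sigma^*H=H$ and $\sigma_*E_i=0$) gives
\[
(d-m-n-2)H-K_X\sim \sigma_*\hat D-\sigma_*\hat E .
\]

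The crux, and the only place the hypothesis is used, is that $\sigma_*\hat E=0$. A prime component of $\hat E$ is either $\sigma$-exceptional, hence among the $E_i$ and killed by $\sigma_*$, or it is the strict transform of a prime divisor on $X$. In the latter case, being $\hat f$-exceptional while the outer projection is finite, it must be contracted by $\hat\pi_{\Lambda,X}$, hence lie in the locus $\hat E_{x_1,\dots,x_m}$ of positive-dimensional fibers. But $X$ does not satisfy $(E_m)$, so $\dim E_{x_1,\dots,x_m}(X)\le n-2$, and by Lemma~\ref{exceptional} also $\dim \hat E_{x_1,\dots,x_m}\le n-2$; as a divisor has dimension $n-1$, this is impossible. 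Thus $\hat E$ is supported on $\bigcup_i E_i$ and $\sigma_*\hat E=0$, so $(d-m-n-2)H-K_X\sim\sigma_*\hat D$ is effective. Finally, since $\hat f$ is an isomorphism at $\hat x$, no $\hat f$-exceptional divisor passes through $\hat x$, so $\hat x\notin {\rm supp}(\hat E)$ and hence $\hat x\notin {\rm supp}(\hat D)$; because $\sigma$ is an isomorphism near $x$, we get $x\notin {\rm supp}(\sigma_*\hat D)$, the desired effective divisor avoiding $x$. I expect the main obstacle to be exactly this control of $\sigma_*\hat E$: pinning down that the only $\hat f$-exceptional divisors are $\sigma$-exceptional, via the non-$(E_m)$ hypothesis together with Lemma~\ref{exceptional} and the finiteness of the outer projection.
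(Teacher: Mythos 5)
Your proposal is correct and follows essentially the same route as the paper: blow up the $m$ general points, compose the inner projection with a general outer projection to obtain a birational morphism onto a degree-$(d-m)$ hypersurface in $\mathbb{P}^{n+1}$, apply the double-point formula (Lemma~\ref{dpf}) at $\hat x$, and use the failure of $(E_m)$ together with Lemma~\ref{exceptional} to rule out any exceptional divisor other than the blow-up divisors. The only difference is bookkeeping: you push the linear equivalence forward by $\sigma_*$ using $\hat f^*H_{\mathbb{P}^{n+1}}=\sigma^*H-\sum_i E_i$ and $\sigma_*K_{\hat X}=K_X$, whereas the paper restricts to $X\setminus\{x_1,\dots,x_m\}$ and then extends the divisor class over the finitely many removed points using normality and $n\ge 2$.
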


\begin{proof}
Let $x \in X$ such that $x \notin \overline{\mathcal{C}}(X)\cup {\rm Sing}(X)$. Choose general points $x_1, \dots,  x_m\in X$ and put $\Lambda=\langle x_1, \cdots, x_m\rangle$. Consider the linear projection $\pi_{\Lambda, X}: X\backslash \Lambda\rightarrow \prj{N-m}$. Let $\bar{X}_{\Lambda}$ be the Zariski closure of the image of $\pi_{\Lambda, X}$. We have
\begin{itemize}
  \item $X\cap \Lambda=\{x_1, \cdots, x_m\}$ holds scheme-theoretically by Lemma~\ref{reduced},
  \item $\dim E_{x_1, \cdots, x_m}(X)\le n-2$  by Lemma~\ref{exceptional},
  \item the induced morphism $\pi'_{\Lambda, X}: X\backslash \Lambda\rightarrow \overline{X}_{\Lambda}$ is isomorphic at $x$ by Lemma~\ref{isomorphism}.
\end{itemize}

Let $\sigma: \hat{X}\rightarrow X$ be the blowup of $X$ at the points $x_1, \dots, x_m$ and $E_i$ the exceptional divisor over $x_i$. The extended projection $\hat{\pi}_{\Lambda, X}: \hat{X}\rightarrow \prj{N-m}$ has no exceptional divisor. Let $\bar{x}=\pi_{\Lambda, X}(x)$. If $m=e-1$, then $N-m=n+1$. Otherwise if $m<e-1$, take a general $(N-m-n-2)$-plane $\Lambda'$ in $\prj{N-m}$ such that
\[
\Lambda'\cap \overline{X}_{\Lambda}=\emptyset \text{ and } \Lambda'\cap (T_{\bar{x}}(\overline{X}_{\Lambda})\cup \text{Cone}(\bar{x}, \overline{X}_{\Lambda}))=\emptyset.
\]
Here we have used the assumption that $\dim T_x(X)\le n$. Consider the projection $\pi_{\Lambda'}: \prj{N-m}\backslash\Lambda'\rightarrow \prj{n+1}$. Put $\overline{\overline{X}}=\pi_{\Lambda'}(\overline{X}_{\Lambda})$. Because $\Lambda'\cap \overline{X}_{\Lambda}=\emptyset$ and $\Lambda'$ is general, the induced morphism $\pi'_{\Lambda', \overline{X}_{\Lambda}}: \overline{X}_{\Lambda}\rightarrow\overline{\overline{X}}$ is finite and birational.

The composite map $\hat{\pi}: \hat{X}\rightarrow \overline{\overline{X}}$ is birational with no exceptional divisor, and $\hat{\pi}$ is isomorphic at $\hat{x}=\sigma^{-1}(x)$. Then by Lemma~\ref{dpf}, there exists an effective divisor $\hat{D}$ on $\hat{X}$ such that $\hat{D}\sim \hat{\pi}^*K_{\overline{\overline{X}}}-K_{\overline{X}}$ and $\hat{x}\notin \text{supp}(\hat{D})$.  Let $D$ be the closure of $\sigma(\hat{D}|_{\hat{X}\backslash \cup_i E_i})$. Then $D$ is an effective Weil divisor on $X$ that does not contain $x$. On $X\backslash \{x_1, \cdots, x_m\}$, it holds that $\pi^*K_{\overline{\overline{X}}}-K_X-D\sim 0$. Because of the condition $n\ge 2$ and the normality of $X$, $\pi^*K_{\overline{\overline{X}}}$ extends to $(d-m-n-2)H$, therefore $D\sim (d-m-n-2)H-K_X$ with $x\notin \text{Supp}(D)$. This shows that $\text{Bs}|(d-m-n-2)H-K_X|\subseteq \overline{\mathcal{C}}(X)\cup X_{\text{Sing}}$.
\end{proof}

For later reference, we put $D_{\text{inn}}=(d-m-n-2)H-K_X$ in case $X$ does not satisfy Condition $(E_m)$.

\section{Regularity bound: Special cases}\label{proof}

In this section and next one, we aim to establish the following vanishing theorem.

\begin{theorem}\label{vanishingtheorem}
Let $X \subseteq \pp^N$ be a non-degenerate projective variety of dimension $n$, codimension $e$, and degree $d$ with normal isolated $\qq$-Gorenstein singularities. If $X$ is not projectively equivalent to a smooth scroll over a smooth projective curve, then we have that
\[
H^i(X, \mathcal{O}_X(k))=0
\]
for every $i>0$ and every $k \geq d-e-n$.
\end{theorem}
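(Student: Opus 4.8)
The plan is to run the variety through Noma's trichotomy (Corollary~\ref{nomasclassification}) and, in the generic case, feed the double-point divisor $D_{\text{inn}}$ into Nadel vanishing, treating by hand the two families for which $D_{\text{inn}}$ is not available. Since $\dim X=n$, Grothendieck vanishing already gives $H^i(X,\mathcal{O}_X(k))=0$ for $i>n$, so only $1\le i\le n$ is at stake, and it suffices to produce, for each fixed $k\ge d-e-n$, the simultaneous vanishing of all positive cohomology of $\mathcal{O}_X(kH)$. The three members of Corollary~\ref{nomasclassification} split according to Condition $(E_{e-1})$: a scroll over a curve and the cone over the Veronese surface satisfy it, while Case $(3)$ is precisely the case where $(E_{e-1})$ fails, and that is where $D_{\text{inn}}$ enters.

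The engine is the following. When $X$ does not satisfy $(E_{e-1})$ we may take $m=e-1$, so that $D_{\text{inn}}=(d-e-n-1)H-K_X$ is defined and $K_X+D_{\text{inn}}=(d-e-n-1)H$. For $L=kH$ with $k\ge d-e-n$ we then have
\[
L-(K_X+D_{\text{inn}})=(k-d+e+n+1)H,
\]
whose coefficient is at least $1$, so this divisor is ample, in particular big and nef. If moreover $D_{\text{inn}}$ is \emph{semiample}, choose $p$ with $pD_{\text{inn}}$ a base point free Cartier divisor and apply Theorem~\ref{Nadel Vanishing} with $M=pD_{\text{inn}}$ and $c=1/p$; since $|pD_{\text{inn}}|$ has empty base locus the fixed part is zero and the associated multiplier ideal collapses to $\mathcal{J}(X,\mathcal{O}_X)$. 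As $X$ is $\mathbb{Q}$-Gorenstein with isolated singularities, the cosupport of $\mathcal{J}(X,\mathcal{O}_X)$ lies in $X_{\text{Sing}}$ and is a finite set of points. Theorem~\ref{Nadel Vanishing} then gives $H^i(X,\mathcal{O}_X(kH)\otimes\mathcal{J}(X,\mathcal{O}_X))=0$ for $i>0$, and twisting $0\to\mathcal{J}(X,\mathcal{O}_X)\to\mathcal{O}_X\to\mathcal{O}_X/\mathcal{J}(X,\mathcal{O}_X)\to 0$ by $\mathcal{O}_X(kH)$ transports this to $\mathcal{O}_X(kH)$ itself, the zero-dimensional quotient having no higher cohomology. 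Thus all of Case $(3)$ reduces to proving $D_{\text{inn}}$ semiample.

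The two remaining families satisfy $(E_{e-1})$, so the engine does not start and I would argue by explicit computation on natural models. For a scroll over a smooth curve (Case $(1)$) I would pull $\mathcal{O}_X(kH)$ back along the birational map $\mu\colon\mathbb{P}(\mathcal{E})\to X$, identify the pullback with $\mathcal{O}_{\mathbb{P}(\mathcal{E})}(1)^{\otimes k}$, and push down along the ruling $p\colon\mathbb{P}(\mathcal{E})\to C$, reading the cohomology off $\operatorname{Sym}^\bullet\mathcal{E}$ on the curve; the subcase excluded from the statement is exactly the smooth scroll over a smooth curve. For the cone over the Veronese surface (Case $(2)$) I would resolve by blowing up the vertex and compute similarly. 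For a birational divisor on a conical rational scroll $\mathbb{E}^{\Lambda}_{\mathcal{E}}=\mathbb{P}(\mathcal{O}_{\mathbb{P}^1}^{l+1}\oplus\mathcal{E})$ (Case $(3.1)$, partial Gauss map non-constant), the explicit projective geometry realizes $X$ as the image of a prime divisor $\widetilde{X}$ and makes either the semiampleness of $D_{\text{inn}}$ or the cohomology directly accessible.

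The genuinely hard step is semiampleness of $D_{\text{inn}}$ in Case $(3.2)$, where the partial Gauss map is constant on each component and $\overline{\mathcal{C}}(X)$ may be positive-dimensional (a union of lines, under the isolated-singularity hypothesis). Theorem~\ref{bpf} confines the base locus of $|D_{\text{inn}}|$ to $\overline{\mathcal{C}}(X)\cup X_{\text{Sing}}$, so $D_{\text{inn}}$ meets nonnegatively every curve outside this set; since the singularities are isolated, the only curves left to control are the lines $L$ inside the linear components of $\overline{\mathcal{C}}(X)$. Establishing $L\cdot D_{\text{inn}}>0$ for every such line makes $D_{\text{inn}}$ nef and makes its restriction to each component of the base locus ample, whence the Zariski--Fujita theorem yields semiampleness. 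I would compute this intersection number after restricting to a generic surface section $S$ of $X$ containing $L$, where $D_{\text{inn}}|_S\cdot L=D_{\text{inn}}\cdot L$. The crux, and the exact point where the $\mathbb{Q}$-Gorenstein hypothesis becomes indispensable, is that $S$ need not be normal, so $K_X$ does not restrict naively; however $\mathbb{Q}$-Gorensteinness makes $K_X|_S$, and hence $D_{\text{inn}}|_S=(d-e-n-1)H|_S-K_X|_S$, a well-defined $\mathbb{Q}$-Cartier class on the possibly non-normal $S$, so that the intersection number is computable and can be shown strictly positive. This is the main obstacle; everything else is bookkeeping around Corollary~\ref{nomasclassification}.
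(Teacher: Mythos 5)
Your skeleton is the paper's own: Noma's trichotomy, explicit computations for the special families, and Nadel vanishing driven by semiampleness of $D_{\text{inn}}$ in the residual case. Your Nadel ``engine'' is correct, and in fact slightly cleaner than the paper's Proposition \ref{p5}: taking $|pD_{\text{inn}}|$ base point free kills the fixed part, identifies the multiplier ideal with $\mathcal{J}(X,\mathcal{O}_X)$ (cosupported on the finitely many singular points), and yields all $H^i$ at once, bypassing the paper's hyperplane-section induction down to $H^1$. However, two steps you treat as routine are exactly where the paper has to work, and as written your plan cannot close them. In case (1), identifying $H^i(X,\mathcal{O}_X(k))$ with cohomology on $\mathbb{P}(\mathcal{E})$ requires $R^j\mu_*\mathcal{O}_{\mathbb{P}(\mathcal{E})}=0$, i.e.\ rational singularities of $X$, which must be proved; and even granted that, when the base curve has genus $g\geq 1$ nothing in your sketch controls $H^1(C,{\rm Sym}^k\mathcal{E})$ --- the splitting into line bundles $\mathcal{O}_{\mathbb{P}^1}(a_l)$ with $a_l\geq 0$ that makes the computation work is special to $\mathbb{P}^1$. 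The missing structural input is Lemma \ref{rational base}: for a \emph{singular} scroll of dimension $\geq 3$ the isolated $\mathbb{Q}$-Gorenstein hypothesis forces canonical (hence rational) singularities and, via rational chain connectedness of the fibers, $C\cong\mathbb{P}^1$; in dimension $2$ a singular scroll is a cone, handled by projective normality (Proposition \ref{p1}). Relatedly, in case (3.1) your fallback ``either semiampleness or direct computation'' cannot be the semiampleness route: by Theorem \ref{Nomathm2} the components of $\overline{\mathcal{C}}(X)$ there may be planes rather than lines, so the line-intersection/Zariski--Fujita mechanism does not apply; one must instead show, as in Proposition \ref{p4} via Lemma \ref{exceptional locus} and Zariski's main theorem, that $\widetilde{X}\to X$ is an isomorphism and compute on the conical rational scroll.

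The second and decisive gap is in case (3.2). You correctly reduce semiampleness to $D_{\text{inn}}\cdot L>0$ for lines $L\subseteq\overline{\mathcal{C}}(X)$, and correctly note that $\mathbb{Q}$-Gorensteinness makes this number computable on a (possibly non-normal) generic surface section $S$. But well-definedness of an intersection number says nothing about its sign, and ``can be shown strictly positive'' is precisely the paper's main technical result (Proposition \ref{positive intersection}), not bookkeeping. Its proof requires: $L\subseteq\overline{\mathcal{C}}(S)$ (Claim A); Theorem \ref{Nomathm1} to present $S$ as the birational image of a divisor $\widetilde{S}$ on a conical scroll over a curve of genus $g$ and degree $d_C$; the discrepancy identity $K_{\widetilde{S}}-f^*K_S=-\sum a_ib_iC_i$ with all $a_i\neq 0$ (Claim B --- the essential use of $\mathbb{Q}$-Gorensteinness, which makes the contracted curves $C_i$ and $\overline{L}$ $\mathbb{Q}$-Cartier and gives $a_i=\frac{2g-2}{d_C+1}+1$); the resulting formula $-K_S\cdot L=-\frac{(2g-2)\mu}{d_C+1}-2\mu+3$; the observation that $g\geq 1$ (otherwise the partial Gauss map would be nonconstant by Theorem \ref{Nomathm2}); and finally Castelnuovo's genus bound, with a separate analysis for $e=2$. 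Without this computation the hardest case of the theorem remains open, so your proposal, while architecturally identical to the paper, is not yet a proof.
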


If $X$ is projectively equivalent to a smooth scroll over a smooth projective curve, then it holds that $\reg(\mathcal{O}_X) \leq d-e$ by \cite[Proposition 3.6]{KP14}. Thus the vanishing theorem implies Theorem \ref{maintheorem}.
\vspace{0.5cm}

Since the curve case is already known by \cite{GLP83} (see also \cite[Proposition 3.3]{KP14}) and the hypersurface case is trivial, we assume from now on that $n, e \geq 2$. To prove Theorem \ref{vanishingtheorem}, it suffices to show that
\begin{equation}\label{H1vanishing}
H^1(X, \mathcal{O}_X(k))=0
\end{equation}
for every $k \geq d-e-n$.

To see this, consider a general hyperplane section $Y \subseteq \P^{N-1}$ of $X \subseteq \P^N$ which is a non-degenerate smooth projective variety of dimension $n-1$, codimension $e$, and degree $d$.
We have an exact sequence
\[
0 \rightarrow \mathcal{O}_X(k) \rightarrow \mathcal{O}_X(k+1) \rightarrow \mathcal{O}_Y(k+1) \rightarrow 0
\]
for any integer $k$.
If $Y$ is not projectively equivalent to a scroll over a smooth projective curve, then by induction, we have that
\begin{equation}\label{hyperplanevanishing}
H^i(Y, \mathcal{O}_Y(k+1))=0
\end{equation}
for every $i \geq 1$ and every $k+1 \geq d-e-(n-1)$ (equivalently, $k \geq d-e-n$). If $Y$ is projectively equivalent to a scroll over a smooth projective curve (in particular, $n \geq 3$), then it is a rational scroll by Lemma \ref{rational base} below. In this case, the same cohomology vanishing (\ref{hyperplanevanishing}) hold.
In any case, we obtain that
\[H^i(X, \mathcal{O}_X(k))=H^i(X, \mathcal{O}_X(k+1))\]
for every $i \geq 2$ and $k \geq d-e-n$. By Serre vanishing, we obtain that $H^i(X, \mathcal{O}_X(k))=0$ for every $i \geq 2$ and $k \geq d-e-n$. So
in the remaining of this section, we will focus on showing that (\ref{H1vanishing}) holds.

Before going any further, we recall a useful lemma of Mumford, which will be used frequently in the sequel.

\begin{lemma}[{\cite[Theorem 2]{Mumford67}}]\label{H^1 vanishing}
Let $X$ be a normal projective variety of dimension $\ge 2$ and $L$ be a nef and big line bundle on $X$. Then
\begin{equation*}
    H^1(X, L^{-1})=0.
\end{equation*}
\end{lemma}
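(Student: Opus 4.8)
The plan is to reduce the statement to the smooth case by passing to a resolution of singularities, and then to deduce it from Kawamata--Viehweg vanishing together with Serre duality. Since we work in characteristic zero, fix a resolution $\pi\colon \widetilde{X}\to X$, so that $\widetilde X$ is a smooth projective variety of dimension $n=\dim X$ and $\pi$ is birational. Because $X$ is normal we have $\pi_*\mathcal{O}_{\widetilde X}=\mathcal{O}_X$, and the projection formula gives $\pi_*\mathcal{O}_{\widetilde X}(-\pi^*L)=\mathcal{O}_X(-L)$.

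First I would apply the Leray spectral sequence to $\pi$ and the sheaf $\mathcal{O}_{\widetilde X}(-\pi^*L)$. The five-term exact sequence yields an injection
\[
0\to H^1(X,\pi_*\mathcal{O}_{\widetilde X}(-\pi^*L))\to H^1(\widetilde X,\mathcal{O}_{\widetilde X}(-\pi^*L)),
\]
and the left-hand group is precisely $H^1(X,\mathcal{O}_X(-L))$ by the identification above. Hence it suffices to prove $H^1(\widetilde X,\mathcal{O}_{\widetilde X}(-\pi^*L))=0$ on the smooth variety $\widetilde X$.

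Next I would verify that $\pi^*L$ is nef and big: nefness is preserved under pullback, and since $\pi$ is birational $(\pi^*L)^{n}=L^{n}>0$ (a nef divisor being big precisely when its top self-intersection is positive). Serre duality on the smooth projective variety $\widetilde X$ then gives
\[
H^1(\widetilde X,\mathcal{O}_{\widetilde X}(-\pi^*L))\cong H^{n-1}(\widetilde X,\mathcal{O}_{\widetilde X}(K_{\widetilde X}+\pi^*L))^{\vee}.
\]
Kawamata--Viehweg vanishing, applied to the nef and big divisor $\pi^*L$, shows $H^i(\widetilde X,K_{\widetilde X}+\pi^*L)=0$ for every $i>0$; since $n\ge 2$ we have $n-1\ge 1$, so the right-hand side vanishes and the argument is complete.

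The main obstacle is the singular case: one cannot apply Serre duality or the vanishing theorem directly on $X$, since $K_X$ need not be Cartier, so the whole argument must be transported to the resolution, with normality used crucially to ensure $\pi_*\mathcal{O}_{\widetilde X}=\mathcal{O}_X$ and hence that the Leray edge map is injective. I also note that the Nadel vanishing of Theorem~\ref{NV} does not seem to apply directly here, because the twist $-L$ cannot be written as a Cartier divisor of the form $K_X+\Delta$ plus a big and nef class; the content of the lemma is really the complementary vanishing for $K_{\widetilde X}+\pi^*L$ accessed through duality.
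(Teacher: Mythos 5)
Your proof is correct and takes essentially the same route as the paper: pass to a resolution $\pi\colon \widetilde{X}\to X$, use normality and the Leray five-term exact sequence to inject $H^1(X,L^{-1})$ into $H^1\bigl(\widetilde{X},\pi^*L^{-1}\bigr)$, and kill the latter by Kawamata--Viehweg vanishing for the nef and big divisor $\pi^*L$. The only cosmetic difference is that the paper invokes Kawamata--Viehweg directly in its dual form for $\pi^*L^{-1}$, whereas you derive that form explicitly via Serre duality on $\widetilde{X}$.
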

\begin{proof}
Let $\mu: X'\rightarrow X$ be a resolution of singularities. Since $X$ is normal, $\mu_*\sshf{X'}\iso \sshf{X}$. One has the exact sequence induced from the Leray spectral sequence
\begin{equation*}
    0\rightarrow H^1(X, L^{-1})\rightarrow H^1(X', \mu^*L^{-1})\rightarrow H^0(X, L^{-1}\otimes R^1\mu_*\sshf{X'})\rightarrow H^2(X, L^{-1})\rightarrow H^2(X', \mu^*L^{-1}).
\end{equation*}
By Kawamata-Viehweg vanishing for $\mu^*L^{-1}$, we obtain $H^1(X', \mu^*L^{-1})=0$; so the assertion follows.
\end{proof}

\subsection{Scroll over a smooth projective curve}\label{scrollcurve}

In this subsection, we prove Theorem \ref{H1vanishing} in the case that $X$ is a scroll over a smooth projective curve.

\begin{proposition}\label{p1}
Let $X\subseteq \pp^N$ be a projective variety of codimension $e$ and degree $d$.
Assume that $X$ is a singular scroll over a smooth projective curve $C$ and $X$ has normal, isolated, $\qq$-Gorenstein singularities.
Then $H^1(X, \mathcal{O}_X(k))=0$ for every $k \in \zz$.
\end{proposition}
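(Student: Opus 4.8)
The plan is to use the birational morphism $\mu\colon W:=\mathbb{E}^{C}_{\mathcal{E}}=\mathbb{P}(\mathcal{E})\to X$ of Definition~\ref{scrollcurvenot} as a resolution, and to play the two fibrations $\mu$ and $p\colon W\to C$ against each other. Because $\mu$ is induced by a subsystem of $|\mathcal{O}_{W}(1)|$ we have $\mathcal{O}_{W}(1)=\mu^{*}\mathcal{O}_{X}(1)$, and since $X$ is normal and $\mu$ is proper birational, $\mu_{*}\mathcal{O}_{W}=\mathcal{O}_{X}$; the projection formula then gives $\mu_{*}\mathcal{O}_{W}(k)=\mathcal{O}_{X}(k)$ for every $k$. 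I would split on the sign of $k$. For $k<0$ the line bundle $\mathcal{O}_{X}(-k)=\mathcal{O}_{X}(1)^{\otimes(-k)}$ is very ample, hence nef and big, so Lemma~\ref{H^1 vanishing} applied with $L=\mathcal{O}_{X}(-k)$ yields $H^{1}(X,\mathcal{O}_{X}(k))=H^{1}(X,L^{-1})=0$ at once (here $n\ge 2$). All negative twists are thus disposed of in one stroke.

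The real content is $k\ge 0$, and the first task is to identify the exceptional geometry. Since $\mathcal{O}_{W}(1)$ is globally generated, $\mathcal{E}$ is nef, and $\mu$ contracts exactly the curves on which $\mathcal{O}_{W}(1)$ has degree $0$. A globally generated degree-zero quotient of $\mathcal{E}$ on the curve $C$ is trivial, so such curves live in the sub-bundle attached to the maximal trivial quotient $\mathcal{E}\twoheadrightarrow\mathcal{O}_{C}^{\oplus s}$, whose image under $\mu$ is a linear space $\mathbb{P}^{s-1}\subseteq X_{\text{Sing}}$. The hypothesis that the singularities are isolated forces $s=1$: the contracted locus is a single section $\Gamma\cong C$ of $p$, collapsed to the unique singular point $x_{0}$, with $\mathcal{O}_{W}(1)|_{\Gamma}\cong\mathcal{O}_{C}$ and conormal bundle $\mathcal{E}':=\ker(\mathcal{E}\to\mathcal{O}_{C})$.

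Next I would run the Leray sequence for $\mu$, which gives
\[
0\to H^{1}(X,\mathcal{O}_{X}(k))\to H^{1}(W,\mathcal{O}_{W}(k))\xrightarrow{\ \alpha_{k}\ }H^{0}(X,R^{1}\mu_{*}\mathcal{O}_{W}(k)),
\]
so that everything reduces to the injectivity of the edge map $\alpha_{k}$. On the bundle side, $p_{*}\mathcal{O}_{W}(k)=S^{k}\mathcal{E}$ and $R^{j}p_{*}\mathcal{O}_{W}(k)=0$ for $0<j<n-1$ and for $j=n-1$ when $k\ge 0$, whence $H^{1}(W,\mathcal{O}_{W}(k))\cong H^{1}(C,S^{k}\mathcal{E})\cong\bigoplus_{j=0}^{k}H^{1}(C,S^{j}\mathcal{E}')$. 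On the other side $R^{1}\mu_{*}\mathcal{O}_{W}(k)$ is supported at $x_{0}$, and by the theorem on formal functions along $\Gamma$ its stalk is $\varprojlim_{m}H^{1}(m\Gamma,\mathcal{O}_{W}(k)|_{m\Gamma})$; the conormal filtration of $\Gamma$ has graded pieces $\mathcal{O}_{W}(k)|_{\Gamma}\otimes S^{j}\mathcal{E}'=S^{j}\mathcal{E}'$ for $j\ge 0$, so this inverse limit is assembled from the very same groups $H^{1}(C,S^{j}\mathcal{E}')$, now for all $j\ge 0$. Under these identifications $\alpha_{k}$ is the inclusion of the summands $0\le j\le k$ into the summands $j\ge 0$, hence injective; therefore $H^{1}(X,\mathcal{O}_{X}(k))=0$ for $k\ge 0$ as well.

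The main obstacle is exactly this injectivity of $\alpha_{k}$ for $k\ge 0$, and it is genuinely delicate: a $\qq$-Gorenstein isolated singularity produced by contracting a positive-genus section (a simple elliptic singularity, say) is log canonical but not klt, so $R^{1}\mu_{*}\mathcal{O}_{W}\ne 0$ and the singularity is not rational. Consequently Kawamata--Viehweg or Nadel vanishing cannot be invoked to kill $H^{1}(C,S^{k}\mathcal{E})$ outright; the point is rather that this entire group is detected injectively in the formal neighbourhood of the contracted section. Making the identification of the graded pieces of $\mathcal{O}_{W}(k)$ along $\Gamma$ with the symmetric powers $S^{j}\mathcal{E}'$ precise is the step that needs care, including the bookkeeping when $\mathcal{E}\to\mathcal{O}_{C}$ fails to split; an Euler-characteristic variant is also possible, at the cost of the auxiliary vanishings $H^{2}(W,\mathcal{O}_{W}(k))=0$ and $H^{0}(X,\omega_{X}(-k))=0$ for $k\ge 0$.
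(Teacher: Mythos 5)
Your handling of $k<0$ via Lemma~\ref{H^1 vanishing} is correct, and your route for $k\ge 0$ is genuinely different from the paper's, which never analyzes $R^1\mu_*\mathcal{O}_W$ at all: for surfaces the paper observes that $X$ is a cone over a projectively normal curve, deduces that $H^1(X,\mathcal{O}_X(k))\to H^1(X,\mathcal{O}_X(k+1))$ is injective for all $k$, and invokes Serre vanishing; for $\dim X\ge 3$ it uses the isolated $\qq$-Gorenstein hypothesis to show (Lemma~\ref{rational base}) that $X$ has canonical, hence rational, singularities and that $C\cong\pp^1$ by \cite{HM07}, after which $H^1(W,\mathcal{O}_W(k))=H^1(\pp^1,S^k\mathcal{E})=0$ since nef bundles on $\pp^1$ split with nonnegative degrees. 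The gap in your proposal is exactly the step you defer as ``bookkeeping'', and it is the crux rather than a technicality: your injectivity of $\alpha_k$ is established only when $0\to\mathcal{E}'\to\mathcal{E}\to\mathcal{O}_C\to 0$ splits. The two direct-sum identifications $H^1(C,S^k\mathcal{E})\cong\bigoplus_{j=0}^{k}H^1(C,S^j\mathcal{E}')$ and $\varprojlim_m H^1\bigl(m\Gamma,\mathcal{O}_W(k)|_{m\Gamma}\bigr)\cong\prod_{j\ge0}H^1(C,S^j\mathcal{E}')$, and hence the statement that $\alpha_k$ ``is the inclusion of the summands'', have no meaning otherwise: in general $S^k\mathcal{E}$ and $\mathcal{O}_W(k)|_{m\Gamma}$ carry only filtrations with graded pieces $S^j\mathcal{E}'$. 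What one actually gets is a filtered injection $S^k\mathcal{E}\hookrightarrow G_m:=(p|_{m\Gamma})_*\bigl(\mathcal{O}_W(k)|_{m\Gamma}\bigr)$, an isomorphism on graded pieces in degrees $j\le k$, with cokernel $Q_m$ filtered by $S^j\mathcal{E}'$ for $k<j<m$; then $\ker\alpha_k=\bigcap_m \operatorname{Im}\bigl(H^0(C,Q_m)\to H^1(C,S^k\mathcal{E})\bigr)$, and there is no formal reason for these connecting maps to have trivial common image. (In the split case they vanish, which is why your computation is correct there.) Moreover the splitting is not free: it is true, but proving it amounts to the paper's geometric input --- in dimension $2$, that $X$ is a cone, so that $W$ is the minimal resolution $\pp(\mathcal{O}_C\oplus\mathcal{E}')$; in dimension $\ge3$, that $C\cong\pp^1$, which is precisely where the $\qq$-Gorenstein hypothesis (nowhere used in your argument) enters the paper. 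So the case you rightly flag as delicate (positive genus $C$, simple-elliptic-type vertex) is exactly the case your argument does not close; it occurs only for surfaces, and there it is the cone/splitting statement that disposes of it.

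There is also a gap at the start, in the identification of the contracted locus. First, a curve $\Gamma$ contracted by $\mu$ is a priori a multisection of $p$, and the degree-zero trivial quotient you invoke lives on $\Gamma$ (where $\mathcal{O}_W(1)=\mu^*\mathcal{O}_X(1)$ restricts trivially), not on $C$; one must descend it. This can be done: the composite $V\otimes\mathcal{O}_{\Gamma}\to (p^*\mathcal{E})|_{\Gamma}\to\mathcal{O}_{\Gamma}$, with $V$ the linear system defining $\mu$, equals $\lambda\otimes\operatorname{id}$ for a nonzero functional $\lambda$ on $V$, and one checks that $\mathcal{E}/\operatorname{Im}(\ker\lambda\otimes\mathcal{O}_C\to\mathcal{E})\cong\mathcal{O}_C$ and that $\Gamma$ is the section of this trivial quotient --- so the conclusion holds, but not for the reason stated. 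Second, ``$\pp^{s-1}\subseteq X_{\text{Sing}}$'' is asserted, yet positive-dimensional fibers of a birational morphism can lie over smooth points (blow-ups), so ``isolated singularities forces $s=1$'' needs an argument; e.g.\ van der Waerden purity: were $X$ smooth at a point of $\pp^{s-1}$, the exceptional locus would be divisorial near it, whereas it equals $\pp(\mathcal{O}_C^{\oplus s})$, of codimension $n-s\ge 2$ unless $s=n-1$, in which case $X$ is the join of a $\pp^{n-2}$ and a curve and is visibly singular along that $\pp^{n-2}$. Both points are repairable. Finally, once the contracted locus is known to be a single section $\Gamma$, there is a finish that bypasses formal functions and the splitting problem altogether: every ruling plane $\mu(p^{-1}(c))$ passes through $x_0=\mu(\Gamma)$, so $X$ is a cone with vertex $x_0$ over a general hyperplane section; normality of $X$ gives projective normality of the base, hence $H^1(X,\mathcal{O}_X(k))\hookrightarrow H^1(X,\mathcal{O}_X(k+1))$ for all $k$, and Serre vanishing concludes. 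This is the paper's surface argument, and it works verbatim in every dimension.
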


We begin with the following observation.

\begin{lemma}\label{rational base}
Keep the assumption of Proposition \ref{p1} and suppose that $\dim X\ge 3$.
Then $X$ has canonical singularities and $C\simeq \pp^1$.
\end{lemma}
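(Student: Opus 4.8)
The plan is to run everything through the natural resolution coming from the scroll structure. By Definition~\ref{scrollcurvenot} we have a proper birational morphism $\mu\colon \mathbb{E}^{C}_{\mathcal{E}}\to X$ with $\mathcal{E}$ of rank $n$, and, setting $\xi:=\sshf{\mathbb{E}^{C}_{\mathcal{E}}}(1)=\mu^{*}\sshf{X}(1)$, this $\xi$ is nef and big with $\xi^{n}=\deg\mathcal{E}=\deg X=d>0$. Writing $p\colon \mathbb{E}^{C}_{\mathcal{E}}\to C$ for the projection and $g$ for the genus of $C$, I will use the relation $K_{\mathbb{E}^{C}_{\mathcal{E}}}=-n\xi+p^{*}(\det\mathcal{E}+K_{C})$. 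Since $X$ is normal and singular, $\mu$ is not an isomorphism, so by Zariski's main theorem it must contract a positive-dimensional fibre over some point of $X_{\text{Sing}}$.

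The first step is to show that $\mu$ contracts \emph{no divisor}, so that it is a small birational morphism. For each $c\in C$ the fibre $F_c\cong\pp^{n-1}$ satisfies $\xi|_{F_c}=\sshf{\pp^{n-1}}(1)$, and since $\mu^{*}\sshf{X}(1)=\xi$, the restriction $\mu|_{F_c}$ is finite. If $E$ were a $\mu$-exceptional prime divisor contained in a fibre it would equal some $F_c$, which is impossible as $\mu|_{F_c}$ is finite; if instead $E$ dominates $C$, then for general $c$ the slice $E\cap F_c$ is an $(n-2)$-dimensional divisor in $F_c$ mapping \emph{finitely}, so $\dim\mu(E)\ge n-2\ge 1$ (here I use $n\ge 3$), forcing $\mu(E)\subseteq X_{\text{Sing}}$ to be positive-dimensional, contradicting the isolatedness of the singularities. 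Hence $\mathrm{Ex}(\mu)$ has codimension $\ge 2$. Now the $\qq$-Gorenstein hypothesis enters decisively: because $K_X$ is $\qq$-Cartier, $K_{\mathbb{E}^{C}_{\mathcal{E}}}-\mu^{*}K_X$ is a $\qq$-Cartier divisor supported on $\mathrm{Ex}(\mu)$ with no divisorial part, hence it vanishes, i.e. $\mu$ is a crepant resolution. Since $\mathbb{E}^{C}_{\mathcal{E}}$ is smooth, every discrepancy $a(F;X)=a(F;\mathbb{E}^{C}_{\mathcal{E}})\ge 0$, so $X$ has canonical singularities.

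For the rational base, I would take any $\mu$-contracted curve $Z$ (one exists by the positive-dimensional fibre above). Then $\xi\cdot Z=\sshf{X}(1)\cdot\mu_{*}Z=0$; since $\xi$ is fibrewise ample, $Z$ cannot lie in a fibre, so $Z$ dominates $C$ and $\delta:=(Z\cdot F)>0$. Crepancy gives $K_{\mathbb{E}^{C}_{\mathcal{E}}}\cdot Z=\mu^{*}K_X\cdot Z=K_X\cdot\mu_{*}Z=0$, and plugging into the canonical bundle formula yields $0=-n(\xi\cdot Z)+(\deg\mathcal{E}+2g-2)\,\delta=(\deg\mathcal{E}+2g-2)\delta$, whence $\deg\mathcal{E}=2-2g$. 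Combined with $\deg\mathcal{E}=d>0$ this forces $g=0$, so $C\cong\pp^{1}$.

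The main obstacle I anticipate is the first step: ruling out a contracted divisor and thereby showing $\mu$ is small. This is precisely where both the dimension bound $n\ge 3$ and the isolatedness of $X_{\text{Sing}}$ are indispensable, and once $\mu$ is known to be small the $\qq$-Gorenstein hypothesis is exactly what upgrades ``no exceptional divisor'' to the crepancy identity $K_{\mathbb{E}^{C}_{\mathcal{E}}}=\mu^{*}K_X$ that drives both conclusions. The remaining ingredients---the fibrewise finiteness of $\mu$ and the Segre-type identities $\xi^{n}=\deg\mathcal{E}$ and $K_{\mathbb{E}^{C}_{\mathcal{E}}}=-n\xi+p^{*}(\det\mathcal{E}+K_{C})$---are routine.
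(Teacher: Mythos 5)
Your reduction of everything to the claim that $\mu$ is \emph{small} is where the proof breaks. In the step ruling out an exceptional divisor $E$ dominating $C$, you pass from ``$\dim\mu(E)\ge n-2\ge 1$'' to a contradiction by asserting that $\mu(E)\subseteq X_{\text{Sing}}$, but this assertion is never justified, and it is false as a general principle: exceptional divisors of a birational morphism onto a normal variety can perfectly well dominate centers inside the smooth locus. This happens for scroll maps in the exact sense of Definition~\ref{scrollcurvenot}: the morphism $\mathbb{P}(\mathcal{O}_{\mathbb{P}^1}^{\oplus 2}\oplus\mathcal{O}_{\mathbb{P}^1}(1))\to\mathbb{P}^3$ given by the complete system $|\mathcal{O}(1)|$ is the blow-up of $\mathbb{P}^3$ along a line; it is birational onto its image and contracts the divisor $\mathbb{P}(\mathcal{O}^{\oplus 2})\cong\mathbb{P}^1\times\mathbb{P}^1$ onto a line lying entirely in the smooth locus of the target. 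So neither Zariski's main theorem nor normality gives $\mu(E)\subseteq X_{\text{Sing}}$; to rule out such a divisor you would need the global structure of the scroll (e.g.\ that contracted curves correspond to degree-zero, hence trivial, quotients of pullbacks of the globally generated bundle $\mathcal{E}$) combined with the hypothesis that $X$ is singular with isolated singularities. Since the crepancy identity $K_{\mathbb{E}^C_{\mathcal{E}}}=\mu^*K_X$, the canonicity of $X$, and the computation $0=(\deg\mathcal{E}+2g-2)\delta$ forcing $g=0$ all hinge on smallness, the argument collapses at this single unproved step. (For what it is worth, under the full hypotheses the smallness conclusion does turn out to be true---indeed your argument, once repaired, would show $d=2-2g$, so that $X$ is forced to be a quadric cone---but that only underlines that the step needs a real proof.)

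It is instructive to compare with how the paper sidesteps exactly this issue: it never tries to show $\mu$ is small. It writes $K_{\mathbb{E}^C_{\mathcal{E}}}=\mu^*K_X+\sum_i a_iE_i$ and only needs $a_i\ge 0$. Because each fibre of $\mu$ has dimension $\le 1$ (as $\mu$ is finite on the fibres of $p$) and $X_{\text{Sing}}$ is finite, the set $\mu^{-1}(X_{\text{Sing}})$ has codimension $\ge 2$ once $n\ge 3$; hence every exceptional divisor $E_i$ meets the open locus lying over ${\rm Sm}(X)$, where the target is smooth and the discrepancies are automatically non-negative. Blow-up--type divisors over smooth centers---precisely the configuration your argument cannot exclude---are thus rendered harmless rather than impossible. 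Canonicity follows, and $C\cong\mathbb{P}^1$ is then obtained not by intersection numbers but from rational chain connectedness of the fibres of $\mu$ \cite{HM07}: a positive-dimensional fibre over a singular point contains a rational curve, which cannot lie in a fibre of $p$ and therefore dominates $C$.
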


\begin{proof}
For each $x\in X$, $\dim{\mu^{-1}(x)}\le 1$. Since $X$ is $\mathbb{Q}$-Gorenstein, we have linear equivalence
\begin{equation*}
K_{\mathbb{E}_{\mathcal{E}}^C} =\mu^*K_X+\sum_i a_iE_i,
\end{equation*}
where $E_i$ are $\mu$-exceptional divisors and $a_i\in \mathbb{Q}$.
Restricting the above to the open set $U=\mathbb{E}^{C}_{\mathcal{E}} \backslash \mu^{-1}(X_{\text{sing}})$ and observing that $\text{codim}(\mu^{-1}(X_{\text{sing}}), \mathbb{E}^{C}_{\mathcal{E}})\ge 2$, we deduce that $a_i\geq 0$,
for all $i$. Now, let $Z\rightarrow \mathbb{E}_{\mathcal{E}}^C \rightarrow X$ be a log resolution of singularities of $X$ which is obtained by blowing-up a sequence of smooth centers
starting on $\mathbb{E}_{\mathcal{E}}^C$. Denote by $\mu_Z \colon Z \rightarrow \mathbb{E}_{\mathcal{E}}^C$ the induced morphism.
Then we have that
\[
K_Z = \mu_Z^*( K_{\mathbb{E}^C_{\mathcal{E}}}) + F,
\]
for some effective $\mu_Z$-exceptional divisor $F$.
Thus
\[
K_Z = (\mu \circ \mu_Z)^*(K_X) + \sum a_i \mu_Y^*(E_i) + F,
\]
implying that all the discrepancies of $Z\rightarrow X$ are non-negative, so we conclude that $X$ has canonical singularities.
Moreover it follows from for instance ~\cite[Corollary 1.5]{HM07} that $\mu^{-1}(x)$ is rationally chain connected for every $x\in X$, and therefore $C$ is isomorphic to $\pp^1$.
\end{proof}

\begin{proof}[Proof of Proposition~\ref{p1}]
First consider the case that $\dim X=2$. Notice that $X$ is obtained by contracting a negative curve on a ruled surface $\P(E)$ over a smooth projective curve $Y$, where the negative curve is a section of $\P(E)$.
Then $X$ is a projective cone over a general hyperplane section $Y \subseteq \pp^{N-1}$ of $X \subseteq \pp^N$. Since $X$ is normal, $Y \subseteq \pp^{N-1}$ is projectively normal. This implies that the map $H^1(X, \sshf{X}(k)) \to H^1(X, \sshf{X}(k))$ is injective for every $k \in \zz$. By Serre vanishing, $H^1(X, \sshf{X}(k))=0$ for all $k \gg 1$, so we obtain that $H^1(X, \sshf{X}(k))=0$ for every $k \in \zz$ as required.

We assume now that $\dim X\ge 3$. By Lemma~\ref{rational base}, we know that $C\iso \pp^1$ and that $X$ has canonical singularities, and hence has rational singularities (see e.g.,~\cite[Theorem 5.22]{KM98}). It follows that
\begin{equation*}
 H^1(X, \sshf{X}(k))\iso H^1 \left(\mathbb{E}^C_{\mathcal{E}}, \sshf{\mathbb{E}^C_{\mathcal{E}}}(k)\right).
\end{equation*}
Therefore we are reduced to proving
\[
H^1\left(\mathbb{E}^C_{\mathcal{E}}, \sshf{\mathbb{E}^C_{\mathcal{E}}}(k)\right)=0.
\]
If $k < 0$, then the above vanishing follows from Kawamata-Viehweg vanishing theorem. If $k\ge 0$, then $R^j\pi_*\sshf{\mathbb{E}^C_{\mathcal{E}}}(k)=0$ for $j>0$, and hence
\begin{equation*}
    H^1\left(\mathbb{E}^C_{\mathcal{E}}, \sshf{\mathbb{E}^C_{\mathcal{E}}}(k)\right)\iso H^1(C, S^{k} \mathcal{E})\simeq H^1(\prj{1}, S^{k}\mathcal{E}).
\end{equation*}
Since $\sshf{\mathbb{E}_{\mathcal{E}}^C}(1)$ is base point free, any symmetric power $S^k \mathcal{E}$ is a nef vector bundle on $\prj{1}$, and hence splits into a direct sum of $\sshf{\prj{1}}(a_l)$ with $a_l\ge 0$. So the $H^1$ vanishing holds.
\end{proof}

\subsection{Cones over the Veronese surface}\label{ConeVeronese}
Cones over the Veronese surface are of minimal degree, i.e. they satisfy $d=e+1$, according to a classification by de Pezzo-Bertini, cf.~\cite{EH87}. In this subsection, we will show that they have at worst rational singularities and have the vanishing property as below.
\begin{proposition}\label{p2}
Let $X\subseteq \pp^N$ be a cone over the Veronese surface. Then $H^1(X, \mathcal{O}_X(k))=0$ for every $k \in \zz$.
\end{proposition}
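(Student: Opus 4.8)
The plan is to analyze the cone $X$ over the Veronese surface directly through its natural resolution, exploiting the fact that cones over the $2$-Veronese embedding $v_2(\pp^2)\subseteq\pp^5$ are extremely well-understood. First I would reduce to the basic building block: by Theorem~\ref{classification by E_i}, the variety $X\subseteq\pp^{n+3}$ is projectively equivalent to the cone over $v_2(\pp^2)\subseteq\pp^5$ with an $(n-3)$-dimensional vertex, so $X$ is iteratively a cone and its cohomology is governed by that of the surface $S=v_2(\pp^2)$. Concretely, I would first show that $X$ has rational singularities. The key geometric input is that the $2$-uple Veronese surface $S\subseteq\pp^5$, and hence the affine cone over it, arises as the quotient $\pp^2\to\pp^2/(\zz/2)$ or, more usefully, that the blow-up of the vertex yields the $\pp^1$-bundle $\mathbb{E}^{\pp^2}_{\sshf{\pp^2}(2)}=\pp(\sshf{\pp^2}\oplus\sshf{\pp^2}(2))$ (a $\pp^1$-bundle whose tautological class resolves the cone singularity), and such resolutions of cones over projectively normal smooth varieties with vanishing intermediate cohomology are rational. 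More precisely, the resolution $\mu\colon Y\to X$ of the vertex has exceptional divisor isomorphic to $S$ with normal bundle $\sshf{S}(-1)$, and one computes $R^j\mu_*\sshf{Y}=0$ for $j>0$ exactly because $H^j(S,\sshf{S}(k))=0$ for $j>0$ and all $k\geq 0$ (the projective normality and arithmetic Cohen-Macaulayness of the Veronese).

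With rational singularities in hand, the same mechanism as in Proposition~\ref{p1} applies: rationality gives $H^1(X,\sshf{X}(k))\iso H^1(Y,\mu^*\sshf{X}(k))$ where $Y$ is the resolution, so I would reduce the vanishing on the singular cone to a vanishing on the smooth resolution $Y$. The resolution $Y$ is a projective bundle (or an iterated one, accounting for the higher-dimensional vertex), whose cohomology is computed by pushing forward to the base via the Leray spectral sequence and reducing to cohomology of symmetric powers of an explicit bundle on $\pp^2$. For $k<0$ the vanishing follows from Kawamata-Viehweg, since the relevant line bundle is the inverse of a nef and big bundle. For $k\geq 0$, the bundle $\sshf{Y}(1)$ is globally generated (being pulled back from the very ample embedding of the cone minus the vertex), so its symmetric powers are nef; on $\pp^2$, nef bundles arising as symmetric powers of $\sshf{\pp^2}\oplus\sshf{\pp^2}(2)$ decompose into non-negative twists $\sshf{\pp^2}(a)$ with $a\geq 0$, all of which have vanishing $H^1$. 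This yields $H^1(Y,\mu^*\sshf{X}(k))=0$ and hence the claim.

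The main obstacle I expect is verifying that the cone $X$ has rational singularities in the higher-dimensional case, where the vertex is a positive-dimensional linear space rather than a point, so the singular locus is no longer isolated along the resolution and one cannot immediately invoke the clean ``cone over a smooth arithmetically Cohen-Macaulay variety'' criterion. The correct way to handle this is to use the characterization that the cone over a projectively normal variety $V$ with $H^i(V,\sshf{V}(k))=0$ for $0<i<\dim V$ and all $k$ has rational singularities, combined with the fact that iterated cones preserve this vanishing profile; alternatively, one can resolve in stages and track the derived pushforward at each blow-up. Once rationality is secured, the remaining computation is the routine projective-bundle argument mirroring the end of the proof of Proposition~\ref{p1}, so the conceptual weight of the proposition lies entirely in establishing that the Veronese cone is rational.
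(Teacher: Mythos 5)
Your proposal is correct in substance and would yield the proposition, but it takes a genuinely different route from the paper's in both halves of the argument. For rational singularities, the paper does not appeal to a general criterion for cones: working with the single blow-up of the whole vertex $\Lambda\cong\pp^{N-6}$ (which is the $\pp^{l+1}$-bundle $\pp_{\pp^2}\bigl(\mathcal{O}_{\pp^2}^{\oplus(l+1)}\oplus\mathcal{O}_{\pp^2}(2)\bigr)$ over $\pp^2$, not an iterated $\pp^1$-bundle), it proves normality by hand via the theorem on formal functions, the key input being Lemma~\ref{conormal bundle formula}: the conormal bundle of a fiber over a vertex point is $\mathcal{O}_{\pp^2}^{\oplus l}\oplus\mathcal{O}_{\pp^2}(2)$, which is $0$-regular, so the maps $\alpha_n$ on graded pieces are surjective; the vanishing $R^it_*\mathcal{O}=0$ is then quoted from \cite{Song15}. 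This is precisely how the paper disposes of the positive-dimensional vertex, the point your last paragraph identifies as the main obstacle and leaves sketched (your alternatives---iterated cones, or the local product structure of the cone near vertex points---are viable but would need to be carried out). For the $H^1$-vanishing itself the paper is then much shorter than your resolution computation: for $k<0$ it uses only normality together with Mumford's Lemma~\ref{H^1 vanishing}, and for $k\ge 0$ it invokes the classical fact that $X$ has minimal degree $d=e+1$ (del Pezzo--Bertini, cf.~\cite{EH87}), hence $\mathcal{O}_X$ is $1$-regular. Your route instead redoes the Kawamata--Viehweg/Leray/symmetric-power computation of Propositions~\ref{p1} and~\ref{p4} on the resolution; this works and is more self-contained, but note that $S^k\bigl(\mathcal{O}_{\pp^2}^{\oplus(l+1)}\oplus\mathcal{O}_{\pp^2}(2)\bigr)$ splits into non-negative twists because the bundle is already split, not because it is nef (nef bundles on $\pp^2$ need not split), and that for $H^1$ you do not need the full isomorphism with the resolution: normality alone gives the Leray injection $H^1(X,\mathcal{O}_X(k))\hookrightarrow H^1(Y,\mu^*\mathcal{O}_X(k))$, which suffices and decouples your computation from rational singularities.

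One caution: the criterion stated in your last paragraph---that the cone over a projectively normal $V$ with $H^i(V,\mathcal{O}_V(k))=0$ for $0<i<\dim V$ and all $k$ has rational singularities---is false as written, since it omits the top-degree condition; the cone over a projectively normal K3 surface satisfies these hypotheses but is not rational. The correct requirement, which you do state correctly earlier in the proposal, is $H^i(V,\mathcal{O}_V(k))=0$ for all $i>0$ and all $k\ge 0$; for $V=v_2(\pp^2)$ this holds because $H^2(\pp^2,\mathcal{O}_{\pp^2}(2k))=0$ for every $k\ge 0$.
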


\begin{notation}
Given a linear subspace $\Lambda \subseteq \pp^N$ of dimension $l\geq 1$, we denote
by $\pp^{\bar{N}}\subseteq \pp^N$ a disjoint linear subspace with $\bar{N}=N-l-1$.
Given a smooth subvariety $Y\subseteq \prj{\bar{N}}$ we denote by $\mathbb{F}^{\Lambda}_Y$ the conical scroll over $Y$ with vertex $\Lambda$.
\end{notation}

There is a natural resolution of singularities of $\mathbb{F}^{\Lambda}_Y$ obtained by blowing up $\mathbb{F}^{\Lambda}_Y$ at the vertex $\Lambda$.
Indeed, let $\mathcal{E}=\sshf{}^{\oplus(l+1)}\oplus\sshf{}(1)$ on $\prj{\bar{N}}$ and $\mathcal{E}_Y:=\mathcal{E}|_Y$. We have the following commutative diagram:
\begin{equation*}
    \xymatrix{
Y\times \Lambda \ar[rd]\ar@{^{(}->}[r] &\mathbb{P}(\mathcal{E}_Y)\ar@/^1.5pc/[rrr]|t \ar[d]^{\pi_Y}\ar@{^{(}->}[r] & \mathbb{P}(\mathcal{E})\ar[d]^{\pi} \ar@{^{(}->}[r] & \prj{\bar{N}}\times\prj{N}\ar[ld]_{p_1}\ar[r]^{p_2} & \prj{N}. \\
  &Y  \ar@{^{(}->}[r]& \prj{\bar{N}}.}
\end{equation*}
The scheme theoretic image of $t$ is $\mathbb{F}^{\Lambda}_Y$. Moreover, we have the following digram with Cartesian squares
\begin{equation*}
    \xymatrix{
F_x  \ar[d] \ar@{^{(}->}[r] &\Phi=Y\times\Lambda \ar[d]^{\pi_{\Lambda}} \ar@{^{(}->}[r] &\mathbb{P}(\mathcal{E}_Y)\ar[d]\ar[dr]^t &\\
\{x\} \ar@{^{(}->}[r] &\Lambda \ar@{^{(}->}[r] &\mathbb{F}^{\Lambda}_Y \ar@{^{(}->}[r]& \prj{N}}
\end{equation*}
Clearly for each $x\in \Lambda$, it holds that $F_x\iso Y$. Since $\Phi$ is a divisor in $\mathbb{P}(\mathcal{E}_Y)$, we have
\[
\sshf{\mathbb{P}(\mathcal{E}_Y)}(-\Phi)\iso \pi_Y^*\shf{L}\otimes\sshf{\mathbb{P}(\mathcal{E}_Y)}(m)
\]
for some  line bundle $\shf{L}$ on $Y$ and some $m\in \mathbb{Z}$. Pushing down the exact sequence
\begin{equation*}
    \ses{\sshf{\mathbb{P}(\mathcal{E}_Y)}(-\Phi)\otimes \sshf{\mathbb{P}(\mathcal{E}_Y)}(1)}{\sshf{\mathbb{P}(\mathcal{E}_Y)}(1)}{\sshf{\Phi}(1)}
\end{equation*}
by $\pi_Y$ yields the exact sequence
\begin{equation*}
    0\rightarrow \shf{L}\otimes S^{m+1}\mathcal{E}_Y\rightarrow \mathcal{E}_Y\rightarrow \sshf{Y}^{\oplus(l+1)}.
\end{equation*}
Because of rank $m=-1$. So $\shf{L}\iso\sshf{Y}(1)$, and it follows that
\begin{equation}\label{conormal bundle of Phi}
    N^*_{\Phi/{\mathbb{P}(\mathcal{E}_Y)}}\iso \sshf{\mathbb{P}(\mathcal{E}_Y)}(-\Phi)\otimes \sshf{\Phi}\iso \pi^*_Y\sshf{Y}(1)\otimes \pi^*_{\Lambda}\sshf{\Lambda}(-1).
\end{equation}
Consider the exact sequence
\begin{equation}\label{exact seq of normal bd}
    \ses{N_{F_x/{\Phi}}}{N_{F_x/{\mathbb{P}(\mathcal{E}_Y)}}}{N_{\Phi/{\mathbb{P}(\mathcal{E}_Y)}}|_{F_x},}
\end{equation}
where by (\ref{conormal bundle of Phi})
$N_{\Phi/{\mathbb{P}(\mathcal{E}_Y)}}|_{F_x}\iso \sshf{Y}(-1)$, and since $\pi_{\Lambda}$ is flat, we have an isomorphism $N_{F_x/{\Phi}}\iso \oplus^{l} \sshf{F_x}$.

\begin{lemma}\label{conormal bundle formula}
Suppose that $H^1(Y, \sshf{Y}(1))=0$. Then we have an isomorphism
\begin{equation*}
    N^*_{F_x/{\mathbb{P}(\mathcal{E}_Y)}}\iso \oplus^{l} \sshf{Y}\oplus \sshf{Y}(1).
\end{equation*}
\end{lemma}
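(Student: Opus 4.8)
The plan is to read off the desired splitting from the normal-bundle sequence (\ref{exact seq of normal bd}) by dualizing it and then using the hypothesis $H^1(Y,\sshf{Y}(1))=0$ to annihilate the resulting extension class. Since $Y$ is smooth, the subvarieties $F_x$ and $\Phi=Y\times\Lambda$ are smooth in the smooth ambient space $\mathbb{P}(\mathcal{E}_Y)$, so all three terms of (\ref{exact seq of normal bd}) are locally free and dualization preserves exactness.

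First I would dualize (\ref{exact seq of normal bd}), obtaining
\[
0\rightarrow \left( N_{\Phi/\mathbb{P}(\mathcal{E}_Y)}|_{F_x}\right)^{*}\rightarrow N^*_{F_x/\mathbb{P}(\mathcal{E}_Y)}\rightarrow N^*_{F_x/\Phi}\rightarrow 0.
\]
Next I would substitute the two identifications already recorded just after (\ref{exact seq of normal bd}). By (\ref{conormal bundle of Phi}) one has $N_{\Phi/\mathbb{P}(\mathcal{E}_Y)}|_{F_x}\iso \sshf{Y}(-1)$, so its dual is $\sshf{Y}(1)$; and the flatness of $\pi_{\Lambda}$ gives $N_{F_x/\Phi}\iso \oplus^{l}\sshf{F_x}$, whose dual under the identification $F_x\iso Y$ is $\oplus^{l}\sshf{Y}$. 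This rewrites the sequence as
\[
0\rightarrow \sshf{Y}(1)\rightarrow N^*_{F_x/\mathbb{P}(\mathcal{E}_Y)}\rightarrow \oplus^{l}\sshf{Y}\rightarrow 0.
\]

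Finally I would argue that this extension is split. Its isomorphism class is governed by an element of
\[
\operatorname{Ext}^1\!\left(\oplus^{l}\sshf{Y},\,\sshf{Y}(1)\right)\iso \oplus^{l}\operatorname{Ext}^1\!\left(\sshf{Y},\sshf{Y}(1)\right)\iso \oplus^{l}H^1(Y,\sshf{Y}(1)),
\]
which vanishes precisely by the hypothesis $H^1(Y,\sshf{Y}(1))=0$. Hence the sequence splits and $N^*_{F_x/\mathbb{P}(\mathcal{E}_Y)}\iso \oplus^{l}\sshf{Y}\oplus\sshf{Y}(1)$, as claimed.

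The argument is essentially formal once the two outer bundles are identified, so I do not anticipate a serious obstacle; the only point meriting care is confirming that the relevant extension group is computed by $H^1(Y,\sshf{Y}(1))$ via $\operatorname{Ext}^1(\oplus^{l}\sshf{Y},\mathscr{F})\iso \oplus^{l}H^1(Y,\mathscr{F})$, which is exactly the place where the stated hypothesis enters.
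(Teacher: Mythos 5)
Your proof is correct and is essentially the paper's own argument: the paper splits the normal bundle sequence (\ref{exact seq of normal bd}) directly by observing $\operatorname{Ext}^1_Y\bigl(N_{\Phi/{\mathbb{P}(\mathcal{E}_Y)}}|_{F_x}, N_{F_x/\Phi}\bigr)\iso \oplus^l H^1(Y,\sshf{Y}(1))=0$ and then dualizes, whereas you dualize first and then split; the two Ext groups are canonically the same, so this is only a reordering of the same steps.
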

\begin{proof}
Since $\text{Ext}^1_Y(N_{\Phi/{\mathbb{P}(\mathcal{E}_Y)}}|_{F_x}, N_{F_x/{\Phi}} )\iso \oplus^l H^1(Y, \sshf{Y}(1))=0$, the sequence (\ref{exact seq of normal bd}) splits. So the assertion follows.
\end{proof}

\begin{proposition}\label{rational singularities of the cone}
Suppose $Y\subseteq \prj{5}$ is the Veronese embedding $\prj{2}\xrightarrow{|\sshf{}(2)|} \prj{5}$. Then the cone $\mathbb{F}^{\Lambda}_Y$ has rational singularities, where $\Lambda\iso \prj{N-6}$.
\end{proposition}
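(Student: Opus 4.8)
The plan is to use the resolution $t \colon \mathbb{P}(\mathcal{E}_Y) \to \mathbb{F}^{\Lambda}_Y$ constructed above, whose exceptional locus is the divisor $\Phi = Y\times\Lambda$ contracted onto $\Lambda$ by the second projection, and to prove that $R^i t_* \sshf{\mathbb{P}(\mathcal{E}_Y)} = 0$ for all $i>0$. Since the $2$-Veronese surface $Y=v_2(\prj{2})$ is projectively normal, the cone $\mathbb{F}^{\Lambda}_Y$ over it is normal, so this vanishing is precisely what is needed to conclude that $\mathbb{F}^{\Lambda}_Y$ has rational singularities. As $t$ is an isomorphism away from $\Lambda$, the sheaves $R^i t_* \sshf{\mathbb{P}(\mathcal{E}_Y)}$ are supported on $\Lambda$, and over a point $x\in\Lambda$ the fibre is $F_x \iso Y \iso \prj{2}$.

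First I would invoke the theorem on formal functions to reduce the vanishing at $x\in\Lambda$ to the vanishing of $H^i(F_x^{(k)}, \sshf{F_x^{(k)}})$ for all $i>0$ and all infinitesimal thickenings $F_x^{(k)}$ of $F_x$ inside $\mathbb{P}(\mathcal{E}_Y)$. Filtering $\sshf{F_x^{(k)}}$ by powers of the ideal of $F_x$, the graded pieces are the symmetric powers $S^{k} N^*_{F_x/\mathbb{P}(\mathcal{E}_Y)}$. Here the computation of Lemma~\ref{conormal bundle formula} is the key input: since $H^1(Y, \sshf{Y}(1)) = H^1(\prj{2}, \sshf{\prj{2}}(2)) = 0$, we have
\[
N^*_{F_x/\mathbb{P}(\mathcal{E}_Y)} \iso \sshf{Y}^{\oplus l}\oplus \sshf{Y}(1),
\]
so that
\[
S^{k} N^*_{F_x/\mathbb{P}(\mathcal{E}_Y)} \iso \bigoplus_{b=0}^{k} \sshf{Y}(b)^{\oplus r_{b,k}}
\]
for suitable multiplicities $r_{b,k}\geq 0$.

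Next I would observe that, because $\sshf{Y}(1) \iso \sshf{\prj{2}}(2)$, each summand satisfies $H^i(Y, \sshf{Y}(b)) = H^i(\prj{2}, \sshf{\prj{2}}(2b)) = 0$ for every $i>0$ and every $b\geq 0$. Hence every graded piece has vanishing higher cohomology, and running the long exact sequences attached to
\[
\ses{S^{k} N^*_{F_x/\mathbb{P}(\mathcal{E}_Y)}}{\sshf{F_x^{(k)}}}{\sshf{F_x^{(k-1)}}}
\]
with base case $H^i(F_x^{(0)}, \sshf{F_x^{(0)}}) = H^i(\prj{2}, \sshf{\prj{2}}) = 0$ for $i>0$, we obtain $H^i(F_x^{(k)}, \sshf{F_x^{(k)}}) = 0$ for all $k$ and $i>0$. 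By formal functions this gives $(R^i t_* \sshf{\mathbb{P}(\mathcal{E}_Y)})^{\wedge}_x = 0$, and since $x\in\Lambda$ was arbitrary, $R^i t_* \sshf{\mathbb{P}(\mathcal{E}_Y)} = 0$ for all $i>0$, as desired.

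The main obstacle I anticipate is the bookkeeping in the formal functions step: one must verify that the scheme-theoretic thickenings cut out by $t^{-1}(\mathfrak{m}_x)$ are governed by the conormal filtration of the \emph{reduced} fibre $F_x$, i.e. that $F_x$ is regularly embedded with the normal bundle coming from \eqref{exact seq of normal bd}. Once this identification with $S^{k} N^*_{F_x/\mathbb{P}(\mathcal{E}_Y)}$ is in place, the cohomological vanishing is immediate from the Veronese relation $\sshf{Y}(1) \iso \sshf{\prj{2}}(2)$. As a conceptual cross-check I would also note that the affine cone over $\mathbb{F}^{\Lambda}_Y$ splits as $\hat{Y}\times \mathbb{A}^{l+1}$, where $\hat{Y}$ is the affine cone over $v_2(\prj{2})$, a quotient (hence rational) singularity; since rational singularities are stable under products with smooth factors and are a local analytic property of cones, this alone already forces $\mathbb{F}^{\Lambda}_Y$ to have rational singularities.
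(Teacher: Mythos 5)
Your proposal is correct, and it takes a genuinely different route from the paper's proof --- in fact it distributes the work in exactly the opposite way. The paper proves \emph{normality} by hand: it applies the theorem on formal functions to the map $\sshf{\prj{N}}\to t_*\sshf{\mathbb{P}(\mathcal{E}_Y)}$, runs the snake lemma on the maps $\alpha_n \colon \frak{m}^n/\frak{m}^{n+1}\to H^0(\is{}^n/\is{}^{n+1})$, and uses the $0$-regularity of $N^*_{F_x/\mathbb{P}(\mathcal{E}_Y)}\iso \sshf{Y}^{\oplus l}\oplus\sshf{Y}(1)$ to get surjectivity of $\alpha_n$; it then disposes of the higher direct images by simply citing \cite[Thm 2.9]{Song15}. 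You do the reverse: you outsource normality to the standard facts that $v_2(\prj{2})$ is projectively normal and that the homogeneous coordinate ring of a cone is a polynomial ring over that of the base (so cones over projectively normal varieties are normal), and you prove $R^it_*\sshf{\mathbb{P}(\mathcal{E}_Y)}=0$ by hand via formal functions along the conormal filtration. Both arguments pivot on the same key input, Lemma~\ref{conormal bundle formula} together with $\sshf{Y}(b)\iso\sshf{\prj{2}}(2b)$; your version is self-contained precisely where the paper leans on \cite{Song15}, and vice versa. The two technical points you flag are indeed harmless: the filtration by powers of $\is{}$ is cofinal with that by powers of $\frak{m}_x\sshf{}$ (so the inverse limits in the theorem on formal functions agree), and $\is{}^k/\is{}^{k+1}\iso S^kN^*_{F_x/\mathbb{P}(\mathcal{E}_Y)}$ holds because $F_x\iso\prj{2}$ is a smooth subvariety of the smooth variety $\mathbb{P}(\mathcal{E}_Y)$, hence regularly embedded. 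Finally, your closing ``cross-check'' deserves promotion to the main argument: Zariski-locally at a point of $\Lambda$ (dehomogenize at one $\Lambda$-coordinate) the cone is $\hat{Y}\times\mathbb{A}^{l}$ with $\hat{Y}=\mathbb{A}^3/\{\pm 1\}$ the affine cone over the Veronese, and this product is itself the quotient of $\mathbb{A}^{3+l}$ by a finite group, hence has rational singularities (and is normal) with no cohomological computation at all; this one observation subsumes both your argument and the paper's.
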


\begin{proof}
To begin with, we shall show that $\mathbb{F}^{\Lambda}_Y$ is normal. In view of the commutative diagram
\begin{equation}\nonumber
\xymatrix{
 & t_*\sshf{\mathbb{P}(\mathcal{E}_Y)} \\
\sshf{\prj{N}} \ar@{->>}[r]\ar[ru] & \sshf{\mathbb{F}^{\Lambda}_Y},\ar@{^{(}->}[u]}
\end{equation}
it suffices to show that the natural map $\sshf{\prj{N}}\rightarrow t_*\sshf{\mathbb{P}(\mathcal{E}_Y)}$ is surjective. Let $x\in \Lambda$ be a closed point and $\frak{m}$ the maximal ideal of the local ring $\mathcal{O}_{\pp^N, x}$. Let $F$ denote the fibre of $t$ over $x$ and $\is{}$ denote the ideal sheaf of $F$ in $\mathbb{P}(\mathcal{E}_Y)$. Passing to the completions and using the theorem of formal functions ~\cite[III. 11]{Hartshorne77}, we are reduced to show the surjectivity of the induced map
\[
\sshf{\prj{N}, x}^{\wedge}=\varprojlim \sshf{\prj{N}, x}/{\frak{m}}^n\rightarrow\varprojlim H^0(\sshf{nF})\iso(t_*\sshf{\mathbb{P}(\mathcal{E}_Y)})^{\wedge}_x.
\]
To this end, consider the commutative diagram with exact rows
$$\xymatrix{
0\ar[r] & \frak{m}^n/ \frak{m}^{n+1}\ar[d]^{\alpha_n}\ar[r] & {\sshf{\prj{N}, x}/{\frak{m}}^{n+1}}\ar[d]^{\beta_{n+1}}\ar[r] &\sshf{\prj{N}, x}/\frak{m}^n\ar[r]\ar[d]^{\beta_n} &0\\
0\ar[r] & H^0(\is{}^n/\is{}^{n+1})\ar[r] & H^0(\sshf{(n+1)F})\ar[r] & H^0(\sshf{nF})\ar[r]& 0.}$$
Assuming that $\alpha_n$ is surjective for all $n$ for the moment, then by the snake Lemma and induction on $n$, we deduce that $\beta_n$ is surjective (the case $n=1$ is straightforward), and hence the map between the completions surjects too.

When $n=1$, the linear map $\alpha_1: T^*_{\prj{N}, x}\rightarrow H^0(N^*_{F_x/{\mathbb{P}(\mathcal{E}_Y)}})$ is injective. Since the spaces have the same dimension, $\alpha_1$an isomorphism. For $n> 1$, consider the commutative diagram
$$\xymatrix{
S^n(\frak{m}/\frak{m}^2) \ar@{->}[r]^-{\simeq}\ar@{->>}[d] & \frak{m}^n/\frak{m}^{n+1}\ar[d]^{\alpha_n}\\
S^n(H^0(\is{}/\is{}^{2} ))\ar@{->>}[r] & H^0(\is{}^n/\is{}^{n+1})}$$
By Lemma \ref{conormal bundle formula} and the cohomology of $\prj{2}$, we deduce that the sheaf $\is{}/\is{}^{2}=\shf{N}^*_{F/{\mathbb{P}(\mathcal{E}_Y)}}$ is 0-regular. Therefore the bottom horizontal map is surjective, and hence the right vertical map $\alpha_n$ is surjective. Thus we have proved that $\mathbb{F}^{\Lambda}_Y$ is normal.
The vanishing $R^it_*\sshf{\mathbb{P}(\mathcal{E}_Y)}= 0$ for $i>0$ follows from \cite[Thm 2.9]{Song15}. Hence, we conclude that $\mathbb{F}^{\Lambda}_Y$ has rational singularities.
\end{proof}

\begin{proof}[Proof of Proposition~\ref{p2}]
By Proposition \ref{rational singularities of the cone}, $X$ is normal. So Lemma \ref{H^1 vanishing} implies that $H^1(\sshf{X}(k))=0$ for $k<0$.
The $H^1$ vanishing for $k\ge 0$ is a direct consequence of the fact that $\sshf{X}$ is $d-e=1$ regular.
\end{proof}

\begin{remark}
With the fact $X$ has rational singularities, one can easily obtain that $H^i(\sshf{X}(k))=0$ for any $0<i<\dim X$ and $k\in \mathbb{Z}$ by Serre duality for rational singularities.
\end{remark}

\subsection{Birational type divisor of a conical rational scroll}\label{ncGauss}
In this subsection, we prove Theorem \ref{vanishingtheorem} in the case that $X$ is a birational type divisor of a conical rational scroll $\mathbb{E}^{\Lambda}_{\mathcal{E}}$. It is worth mentioning that only normality is needed for this case.

\begin{proposition}\label{p4}
Let $X$ be a non-degenerate normal projective variety. Assume that $X$ is a birational type divisor of a conical rational scroll $\mathbb{E}^{\Lambda}_{\mathcal{E}}$.
Then $H^1(X, \mathcal{O}_X(k))=0$ for every $k \in \zz$.
\end{proposition}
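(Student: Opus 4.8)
The plan is to transfer the cohomology of $\sshf{X}(k)$ to the ambient projective bundle. Write $P := \mathbb{E}^{\Lambda}_{\mathcal{E}} = \pp(\mathcal{F})$ with $\mathcal{F} := \sshf{\prj{1}}^{\oplus(l+1)}\oplus\mathcal{E}$ and projection $p\colon P\to\prj{1}$, and let $\widetilde{X}\in|\sshf{P}(\mu)\otimes p^*\sshf{\prj{1}}(b)|$ be the prime divisor whose birational image under $\psi$ is $X$, with $f:=\psi|_{\widetilde{X}}\colon\widetilde{X}\to X$. Since $\psi^*\sshf{\prj{N}}(1)=\sshf{P}(1)$, we have $\sshf{\widetilde{X}}(k)=f^*\sshf{X}(k)$, so everything will be reduced to an explicit computation on $P$, where $\mathcal{F}$ is nef because $\mathcal{E}$ is ample.

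First I would use normality to compare $X$ with $\widetilde{X}$. The morphism $f$ is proper and birational onto the normal variety $X$; forming the Stein factorization of $f$ produces a finite birational morphism onto $X$, which must be an isomorphism by Zariski's Main Theorem, whence $f_*\sshf{\widetilde{X}}=\sshf{X}$ (note that normality of $\widetilde{X}$ is not needed). By the projection formula $f_*\sshf{\widetilde{X}}(k)=\sshf{X}(k)$, and the five-term exact sequence of the Leray spectral sequence for $f$ gives an injection
\[
H^1(X,\sshf{X}(k))\hookrightarrow H^1(\widetilde{X},\sshf{\widetilde{X}}(k)).
\]
Hence it suffices to prove $H^1(\widetilde{X},\sshf{\widetilde{X}}(k))=0$ for every $k$.

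Next, from the structure sequence of the divisor $\widetilde{X}\subseteq P$,
\[
\ses{\sshf{P}(k-\mu)\otimes p^*\sshf{\prj{1}}(-b)}{\sshf{P}(k)}{\sshf{\widetilde{X}}(k)},
\]
the associated long exact sequence reduces the vanishing of $H^1(\widetilde{X},\sshf{\widetilde{X}}(k))$ to
\[
H^1(P,\sshf{P}(k))=0\quad\text{and}\quad H^2\!\left(P,\sshf{P}(k-\mu)\otimes p^*\sshf{\prj{1}}(-b)\right)=0.
\]
Both are computed by the Leray spectral sequence along $p$, which degenerates because $\prj{1}$ has cohomological dimension one; the relevant higher direct images satisfy $R^jp_*\sshf{P}(a)=0$ for $0<j<n$ and $p_*\sshf{P}(a)=S^a\mathcal{F}$ for $a\geq 0$. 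For the first vanishing, $H^1(P,\sshf{P}(k))=H^1(\prj{1},S^k\mathcal{F})$, which is zero since the nef bundle $S^k\mathcal{F}$ splits on $\prj{1}$ into summands of non-negative degree.

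The delicate point — where I expect the actual work — is the second vanishing when the relative dimension $n$ equals $2$, for then $R^2p_*=R^np_*$ need not vanish. Here I would invoke relative Serre duality,
\[
R^np_*\sshf{P}(a)\iso\left(S^{-a-n-1}\mathcal{F}\right)^\vee\otimes(\det\mathcal{F})^\vee,
\]
and combine it with $\det\mathcal{F}=\det\mathcal{E}$, the inequality $\deg\mathcal{E}\geq n-l\geq 1$ coming from ampleness, and the fact that Noma's classification (Theorem~\ref{Nomathm2}) forces the type to be $(\mu,1)$ with $\mu\geq 2$, i.e. $b=1$. Then every summand of $\left(S^{-a-n-1}\mathcal{F}\right)^\vee\otimes(\det\mathcal{E})^\vee\otimes\sshf{\prj{1}}(-b)$ has strictly negative degree, so its $H^0$ on $\prj{1}$ vanishes; for $n>2$ the term is zero for free since $R^1p_*=R^2p_*=0$. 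Combining the two vanishings yields $H^1(\widetilde{X},\sshf{\widetilde{X}}(k))=0$, and therefore $H^1(X,\sshf{X}(k))=0$ for all $k$.
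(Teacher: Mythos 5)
Your proof is correct, and it reaches the same two vanishings on the scroll $P=\mathbb{E}^{\Lambda}_{\mathcal{E}}$ via the same divisor exact sequence, but it differs from the paper's proof in two genuine ways. First, the passage from $X$ to $\widetilde{X}$: the paper proves the stronger statement that $\psi|_{\widetilde{X}}\colon\widetilde{X}\to X$ is an \emph{isomorphism}, which requires its Lemma~\ref{exceptional locus} (the exceptional locus of $\psi$ lies in $\widetilde{\Lambda}$) plus Noma's local computation to get finiteness, and only then Zariski's Main Theorem; you instead apply Stein factorization and ZMT to get merely $f_*\mathcal{O}_{\widetilde{X}}=\mathcal{O}_X$ and the Leray injection $H^1(X,\mathcal{O}_X(k))\hookrightarrow H^1(\widetilde{X},\mathcal{O}_{\widetilde{X}}(k))$, which is all the vanishing statement needs. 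This is a real simplification: it bypasses Lemma~\ref{exceptional locus} and the citation to Noma entirely, at the cost of not recovering the (independently interesting) fact that $\widetilde{X}\cong X$. Second, the negative twists: the paper disposes of $H^1(P,\mathcal{O}_P(k))$ for $k<0$ and $H^2(P,\mathcal{O}_P(k-\mu)\otimes p^*\mathcal{O}_{\mathbb{P}^1}(-1))$ for $k<\mu$ in one stroke by Kawamata--Viehweg vanishing (using that $\mathcal{O}_P(1)$ is nef and big and $\dim P=n+1\geq 3$), whereas you handle all $k$ uniformly by Leray, invoking relative Serre duality $R^np_*\mathcal{O}_P(a)\cong(S^{-a-n-1}\mathcal{F})^\vee\otimes(\det\mathcal{F})^\vee$ for the only potentially dangerous term ($n=2$, $a\leq -n-1$) and killing it by the degree count $\deg\det\mathcal{E}\geq 1$, $b=1$; your computation is right (indeed every summand has degree $\leq -2$), and it makes the proof more elementary and self-contained, trading one line of KV for a short explicit duality argument. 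Both your argument and the paper's tacitly use the standing assumption $n,e\geq 2$ of that section (for $n=1$ the term $R^1p_*\mathcal{O}_P(k)$ would enter $H^1$ and the argument as written would not close), and both import from Theorem~\ref{Nomathm2} that the divisor has type $(\mu,1)$, $\mu\geq 2$, so you are on equal footing there.
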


\begin{lemma}\label{exceptional locus}
Keep the notation in Definition~\ref{ratscrollnot}. Let $X$ be a birational type divisor of a conical rational scroll $\mathbb{E}^{\Lambda}_{\mathcal{E}}$ and let $\psi: \tilde{X}\rightarrow X$ be the induced birational morphism. Then the exceptional locus of $\psi$ is contained in $\tilde{\Lambda}$.
\end{lemma}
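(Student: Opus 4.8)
The plan is to understand the geometry of the conical rational scroll $\mathbb{E}^{\Lambda}_{\mathcal{E}} = \pp(\sshf{\pp^1}^{l+1} \oplus \mathcal{E})$ and the embedding $\psi$ defined by a subsystem of $|\sshf{}(1)|$, and to show that the only place where $\psi$ can fail to be an isomorphism is along the subbundle $\widetilde{\Lambda}_{\pp^1} = \pp(\sshf{\pp^1}^{l+1})$, which by construction maps onto the vertex $\Lambda$. The key point is that $\psi$ is induced by the complete (or a sub-) linear system of the tautological bundle $\sshf{\mathbb{E}^{\Lambda}_{\mathcal{E}}}(1)$, and the positivity of $\sshf{}(1)$ away from $\widetilde{\Lambda}_{\pp^1}$ — coming from the ampleness of $\mathcal{E}$ — should force $\psi$ to be a closed embedding (in particular quasi-finite, hence finite since $\widetilde{X}$ is projective, hence an isomorphism by normality/birationality) on the complement.

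Concretely, first I would restrict everything to the prime divisor $\widetilde{X} \subseteq \mathbb{E}^{\Lambda}_{\mathcal{E}}$ and set $\psi = \phi|_{\widetilde{X}}$, where $\phi \colon \mathbb{E}^{\Lambda}_{\mathcal{E}} \to \pp^N$ is the scroll embedding. The morphism $\phi$ contracts the subbundle $\widetilde{\Lambda}_{\pp^1}$ onto the linear space $\Lambda$ (each fiber $\pp^l$ of $\widetilde{\Lambda}_{\pp^1} \to \pp^1$ is collapsed, since $\sshf{}(1)|_{\widetilde{\Lambda}_{\pp^1}}$ pulls back from $\Lambda$). Away from $\widetilde{\Lambda}_{\pp^1}$, I would argue that $\sshf{}(1)$ separates points and tangent vectors: the tautological bundle of $\pp(\sshf{\pp^1}^{l+1} \oplus \mathcal{E})$ restricted to the open complement is relatively very ample over $\pp^1$ and, combined with the positivity of $\mathcal{E}$, globally very ample there. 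Hence $\phi$ is an isomorphism onto its image over $\mathbb{E}^{\Lambda}_{\mathcal{E}} \setminus \widetilde{\Lambda}_{\pp^1}$, so $\mathrm{Ex}(\phi) \subseteq \widetilde{\Lambda}_{\pp^1}$. Intersecting with $\widetilde{X}$ gives $\mathrm{Ex}(\psi) \subseteq \widetilde{X} \cap \widetilde{\Lambda}_{\pp^1} \subseteq \widetilde{\Lambda}_{\pp^1}$, which is the claimed containment (the statement writes $\widetilde{\Lambda}$ for this subbundle).

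The main obstacle I anticipate is making the positivity argument precise, namely verifying that $\phi$ really is an isomorphism on the complement of $\widetilde{\Lambda}_{\pp^1}$ and that nothing outside this subbundle gets contracted. Two subtleties arise: (1) $\psi$ is defined only by a \emph{sub}system of $|\sshf{}(1)|$, so I must check the chosen subsystem still separates points and tangents away from $\widetilde{\Lambda}_{\pp^1}$ — this follows from the defining property in Definition~\ref{ratscrollnot} that $\phi$ is a birational embedding and that $\widetilde{\Lambda}_{\pp^1}$ is precisely the locus mapping to $\Lambda$; and (2) the exceptional locus is a set-theoretic notion, so I should phrase the conclusion as $\mathrm{Ex}(\psi) = \{z \in \widetilde{X} \mid \dim \psi^{-1}(\psi(z)) \geq 1 \text{ or } \psi \text{ is not a local iso at } z\}$ and show every such $z$ lies in $\widetilde{\Lambda}_{\pp^1}$.

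The cleanest route is arguably to invoke directly the structure of $\phi$ on the scroll: since $\phi$ is an isomorphism over $\pp^N \setminus \Lambda$ onto its image minus $\Lambda$ (a standard fact for the cone-type embeddings of Definition~\ref{ratscrollnot}, where $\widetilde{\Lambda}_{\pp^1} = \phi^{-1}(\Lambda)$), any point of $\widetilde{X}$ at which $\psi$ fails to be a local isomorphism must map into $\Lambda$ and hence lie in $\widetilde{X} \cap \phi^{-1}(\Lambda) = \widetilde{X} \cap \widetilde{\Lambda}_{\pp^1}$. This reduces the entire lemma to the single geometric input that $\phi^{-1}(\Lambda) = \widetilde{\Lambda}_{\pp^1}$ and that $\phi$ is an isomorphism off this locus, which is built into the construction of the conical rational scroll.
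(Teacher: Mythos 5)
Your instinct that the ampleness of $\mathcal{E}$ is what prevents $\psi$ from contracting anything away from $\widetilde{\Lambda}_{\pp^1}$ is correct, and it is exactly the mechanism the paper uses. But your reduction of the lemma to the claims that $\phi^{-1}(\Lambda)=\widetilde{\Lambda}_{\pp^1}$ and that $\phi$ is an isomorphism off $\widetilde{\Lambda}_{\pp^1}$ has a genuine gap: these claims are \emph{not} ``built into the construction.'' Definition~\ref{ratscrollnot} only requires $\psi$ to be defined by \emph{some} base-point-free linear subsystem of $|\mathcal{O}_{\mathbb{E}^{\Lambda}_{\mathcal{E}}}(1)|$, birational onto its image, sending $\widetilde{\Lambda}_{\pp^1}$ \emph{onto} $\Lambda$. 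Since the subsystem may be incomplete, the map factors as the complete-system map (which sends $\mathbb{E}^{\Lambda}_{\mathcal{E}}$ onto the cone over the rational normal scroll $\pp(\mathcal{E})$, contracting $\widetilde{\Lambda}_{\pp^1}$ to the vertex and nothing else) followed by a linear projection whose center is disjoint from that cone. Such a projection is finite but need not be injective: it can identify pairs of points of the cone away from the vertex, and it can map points outside $\widetilde{\Lambda}_{\pp^1}$ into $\Lambda$, all while keeping the map birational. So both of your ``standard facts'' can fail, and your justification of them --- ``this follows from the defining property that $\phi$ is a birational embedding and that $\widetilde{\Lambda}_{\pp^1}$ is precisely the locus mapping to $\Lambda$'' --- is circular: a birational morphism can certainly contract curves, and ruling this out away from $\widetilde{\Lambda}_{\pp^1}$ is precisely the content of the lemma, while ``$\widetilde{\Lambda}_{\pp^1}=\phi^{-1}(\Lambda)$'' is not part of the definition (only surjectivity onto $\Lambda$ is). A warning sign: if your strong claims were free, then $\psi\colon\widetilde{X}\to X$ would immediately be an isomorphism outside $\widetilde{\Lambda}$, yet the paper, in the proof of Proposition~\ref{p4}, still needs Noma's local computation, Zariski's main theorem and the normality of $X$ to reach that conclusion from the lemma.

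The argument can be repaired along your lines, but it then becomes a different proof: use the factorization above, observe that base-point-freeness forces the projection center to be disjoint from the cone, hence the projection is finite on the cone, hence the positive-dimensional fibers of $\psi$ coincide with those of the complete-system map, which are the rulings of $\widetilde{\Lambda}_{\pp^1}$ over $\Lambda$. The paper instead argues intrinsically, never analyzing the image: if $\Gamma$ is (the reduction of) a positive-dimensional fiber of $\psi$, then (i) $\Gamma$ is finite, in fact birational, over $\pp^1$, because a base-point-free subsystem of $|\mathcal{O}_{\pp^n}(1)|$ on a $p$-fiber $\pp^n$ is necessarily the complete system, so $\psi$ embeds each $p$-fiber and $\Gamma$ meets each fiber in at most one point; (ii) $\deg\bigl(\mathcal{O}_{\mathbb{E}^{\Lambda}_{\mathcal{E}}}(1)|_{\Gamma}\bigr)=\Gamma\cdot\psi^{*}\mathcal{O}_{\pp^N}(1)=0$ because $\Gamma$ is contracted; (iii) by the universal property of the projective bundle, $\Gamma\subseteq\mathbb{E}^{\Lambda}_{\mathcal{E}}$ corresponds to a line-bundle quotient $\mathcal{O}_{\Gamma}^{\oplus(l+1)}\oplus\pi_{\Gamma}^{*}\mathcal{E}\to\mathscr{L}$ of degree $0$; since $\pi_{\Gamma}^{*}\mathcal{E}$ is ample (finite pullback of an ample bundle), the component $\pi_{\Gamma}^{*}\mathcal{E}\to\mathscr{L}$ must vanish, so the quotient factors through $\mathcal{O}_{\Gamma}^{\oplus(l+1)}$, i.e.\ $\Gamma\subseteq\pp(\mathcal{O}_{\pp^1}^{\oplus(l+1)})=\widetilde{\Lambda}$. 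What is missing from your write-up is any step of either kind, i.e.\ an argument that survives the passage from the complete linear system to an arbitrary base-point-free subsystem.
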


\begin{proof}
Consider a point $x\in X$ such that $\dim\psi^{-1}(x)\ge 1$. Since $\sshf{\mathbb{E}^{\Lambda}_{\mathcal{E}}}(1)$ is $p-$very ample, the induced map $p^{-1}(x)\rightarrow C$ is bijective. Let $\Gamma=\paren{p^{-1}(x)}_{\text{red}}$. Then $\pi_{\Gamma}: \Gamma\rightarrow C$ is a birational morphism, in particular $C$ is isomorphic to the normalization of $\Gamma$.

The closed immersion $\sigma: \Gamma\hookrightarrow \mathbb{E}^\Lambda_{\mathcal{E}}$ is induced by a rank one quotient
\[
\sshf{\Gamma}^{\oplus (l+1)}\oplus \pi^*_{\Gamma}\mathcal{E} \rightarrow\shf{L}.
\]
Then
\begin{equation*}
    \deg(\shf{L})=\sigma(\Gamma)\cdot c_1(\sshf{\mathbb{E}^\Lambda_{\mathcal{E}}}(1))=\sigma(\Gamma)\cdot \psi^*(c_1(\sshf{\prj{N}}(1)))=0,
\end{equation*}
because $\Gamma$ is contracted by $\psi$. Since $\mathcal{E}$ is ample, the pullback $\pi^*_{\Gamma}\mathcal{E}$ by a finite map is ample, and consequently any of its quotients is ample. In particular, there is no nonzero map $\pi^*_{\Gamma}\mathcal{E}\rightarrow \shf{L}$. Therefore $\sshf{\Gamma}^{\oplus (l+1)} \oplus \pi^*_{\Gamma}\mathcal{E} \rightarrow\shf{L}$ factors through a quotient $\sshf{\Gamma}^{\oplus (l+1)}\rightarrow\shf{L}$, which induces an embedding
\begin{equation*}
    \Gamma=\mathbb{P}(\shf{L})\hookrightarrow \mathbb{P}\left(\sshf{\Gamma}^{\oplus (l+1)} \right)\rightarrow \tilde{\Lambda}.
\end{equation*}
This completes the proof.
\end{proof}

\begin{proof}[Proof of Proposition~\ref{p4}]
By Lemma \ref{exceptional locus}, the birational morphism $\tilde{X}\rightarrow X$ is isomorphic outside of $\Lambda$. So according to the local computation in ~\cite[Proposition.~5.2 (2)]{Noma18}, the morphism $\tilde{X}\rightarrow X$ is finite. Since $X$ is normal, Zariski main theorem asserts that this is indeed an isomorphism. Therefore we may carry out exactly the same computations as in~\cite[p.~4620]{Noma14} to conclude the proof in the following.

Assume that $\tilde{X}$ is a divisor of type $(\mu,1)$ with $\mu\ge 2$. In order to prove
\[
H^1( \tilde{X},\mathcal{O}_{\tilde{X}}(k))=0
\]
by the exact sequence
\[
0\rightarrow \mathcal{O}_{\mathbb{E}_{\mathcal{E}}^{\Lambda}}(k-\mu) \otimes p^*\mathcal{O}_{\pp^1}(-1)
\rightarrow \mathcal{O}_{\mathbb{E}_{\mathcal{E}}^{\Lambda}}(k) \rightarrow \mathcal{O}_{\widetilde{X}}(k)\rightarrow 0,
\]
it suffices to show the vanishing
\[
H^1( \mathcal{O}_{\mathbb{E}^{\Lambda}_{\mathcal{E}}}(k))\iso 0, \quad \text{ and }\quad H^{2}( \mathcal{O}_{\mathbb{E}^{\Lambda}_{\mathcal{E}}}(k-\mu) \otimes p^* \mathcal{O}_{\pp^1}(-1))\iso 0.
\]
Since $\mathcal{O}_{\mathbb{E}^{\Lambda}_{\mathcal{E}}}(1)$ is nef and big, the left vanishing for $k < 0$ and the right vanishing for $k-\mu < 0$  follow from Kawamata-Viehweg vanishing. For any $j \geq 0$, we have $R^i p_* \mathcal{O}_{\mathbb{E}^{\Lambda}_{\mathcal{E}}}(j)$ for $i > 0$, so by the Leray spectral sequence, we find that
\[
H^1( \mathcal{O}_{\mathbb{E}^{\Lambda}_{\mathcal{E}}}(k)) \simeq
H^1(\prj{1}, {\rm Sym}^{k}(\mathcal{O}_{\mathbb{P}^1}^{\oplus (l+1)} \oplus \mathcal{E})) =0
\]
in the case that $k \geq 0$, and
\[
H^{2}( \mathcal{O}_{\mathbb{E}^{\Lambda}_{\mathcal{E}}}(k-\mu) \otimes p^* \mathcal{O}_{\pp^1}(-1)) \simeq
H^{2}(\prj{1}, {\rm Sym}^{k-\mu}(\mathcal{O}^{\oplus(l+1)}_{\pp^1}\oplus \mathcal{E}) \otimes \mathcal{O}_{\pp^1}(-1))=0
\]
in the case that $k- \mu \geq 0$.
\end{proof}

\section{Semiampleness of double-point divisors: General case}\label{cGauss}
In this section, we finish the proof of Theorem \ref{vanishingtheorem} for the general case in Noma's classification, which is (3.2) of Corollary \ref{nomasclassification}. One crucial point is to establish semiampleness of the double-point divisor.

\begin{proposition}\label{positive intersection}
Let $X \subseteq \P^N$ be a non-degenerate normal projective variety of codimension $e$ and degree $d$ with isolated $\mathbb{Q}$-Gorenstein singularities. Assume that $\dim \overline{\mathcal{C}}(X) = 1$ and the partial Gauss map of $X$ is constant on every irreducible component of $\overline{\mathcal{C}}(X)$. Take an irreducible component $L$ on $\overline{\mathcal{C}}(X)$, which is a line in $\P^N$. Then, we have
$$
D_{inn}(X)\cdot L > 0.
$$
\end{proposition}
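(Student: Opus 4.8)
The plan is to convert $D_{inn}(X)\cdot L$ into an intersection number on a surface, where Noma's structure theorem (Theorem~\ref{Nomathm1}) supplies an explicit model.

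Since we are in case $(3.2)$ of Corollary~\ref{nomasclassification}, the variety $X$ fails Condition $(E_{e-1})$, so I take $m=e-1$ and set $D_{inn}(X)=(d-e-n-1)H-K_X$ by the convention following Theorem~\ref{bpf}. The divisor $K_X$ is $\qq$-Cartier by the $\qq$-Gorenstein hypothesis, so $D_{inn}(X)\cdot L$ is a well-defined rational number; as $L$ is a line, $H\cdot L=1$, and hence
\[
D_{inn}(X)\cdot L=(d-e-n-1)-K_X\cdot L .
\]
Thus it suffices to bound $K_X\cdot L$ from above by $d-e-n-1$.

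Next I would reduce to a surface. Cut $X$ by $n-2$ general hyperplanes $H_1,\dots,H_{n-2}$ all containing $L$ to obtain a surface $S\supseteq L$ of degree $d$ and codimension $e$ in a $\prj{e+2}$. Because $X$ has only isolated singularities and $L$ is one-dimensional, $L$ meets $X_{\text{Sing}}$ in finitely many points, and a Bertini-type argument (the common intersection of the tangent hyperplanes through the fixed direction $T_xL$ has the expected dimension) shows that $S$ is smooth at the generic point of $L$ and along $L$ away from $L\cap X_{\text{Sing}}$; in particular $L\not\subseteq S_{\text{Sing}}$. Adjunction for the $\qq$-Cartier divisor $K_X$ then gives $K_S=(K_X+(n-2)H)|_S$ in a neighborhood of the generic point of $L$, whence $K_X\cdot L=K_S\cdot L-(n-2)$ and
\[
D_{inn}(X)\cdot L=(d-e-3)-K_S\cdot L=D_{inn}(S)\cdot L ,
\]
where $D_{inn}(S)=(d-e-3)H-K_S$ is the surface double-point divisor. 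The delicate point here is that $S$ need not be normal (indeed $X$ need not be Cohen–Macaulay); it is precisely the $\qq$-Gorenstein hypothesis that makes $K_S$ a $\qq$-Cartier class near the generic point of $L$ and the adjunction formula meaningful there.

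To compute on $S$, I would invoke Theorem~\ref{Nomathm1} with $\Lambda=L$ and $l=1$: after the section, $S$ is a birational divisor of type $(\mu,\mathcal{L}_B)$ with $\mu\geq 2$ on the conical scroll $\mathbb{F}=\mathbb{F}^{L}_{B}=\P_B(\sshf{B}^{\oplus 2}\oplus\sshf{B}(1))$ over a smooth curve $B$, a general curve section of $Y$, which is non-degenerate of degree $\delta=\deg Y$ in a $\prj{e}$; moreover $\deg_B\mathcal{L}_B=1$ and $d=\mu\delta+1$. Let $\phi\colon\widetilde{S}\to S$ be the induced birational morphism from the divisor $\widetilde{S}\subseteq\mathbb{F}$. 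It contracts the $\mu$ curves $E_i=\{p_i\}\times B$ lying over the zeros $(g)_0$ of $g\in H^0(\sshf{L}(\mu))$ to points of $L$, while the strict transform $\widetilde{L}$ of $L$ is the line $\prj{1}$ sitting in a single fiber of $\mathbb{F}\to B$ (coming from $L\times (w)_0$, a single point since $\deg_B\mathcal{L}_B=1$). Writing $\xi=c_1(\sshf{\mathbb{F}}(1))$ and using $\xi\cdot\widetilde{L}=1$ together with $\tau_B^*(-)\cdot\widetilde{L}=0$, adjunction in the smooth threefold $\mathbb{F}$ yields the clean value $K_{\widetilde{S}}\cdot\widetilde{L}=(K_{\mathbb{F}}+\widetilde{S})\cdot\widetilde{L}=\mu-3$. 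Descending via $K_{\widetilde{S}}=\phi^*K_S+\sum_i a_iE_i$ and $E_i\cdot\widetilde{L}=1$ gives $K_S\cdot L=(\mu-3)-\sum_i a_i$, hence $D_{inn}(X)\cdot L=d-e-\mu+\sum_i a_i$.

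Finally I would make the positivity explicit. Each contracted $E_i\cong B$ has $E_i^2=-(\delta+1)$ in $\widetilde{S}$, so its discrepancy is $a_i=-\tfrac{2g(B)+\delta-1}{\delta+1}$; substituting and using $d=\mu\delta+1$ gives
\[
D_{inn}(X)\cdot L=\mu\cdot\frac{\delta(\delta-1)-2g(B)}{\delta+1}+1-e ,
\]
and positivity follows from $\mu\geq 2$, $e\geq 2$, the non-degeneracy bound $\delta=\deg Y\geq e$, and Castelnuovo's genus bound controlling $g(B)$ against $\delta(\delta-1)$ for the non-degenerate curve $B\subseteq\prj{e}$. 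I expect the main obstacle to be exactly the non-normality of $S$ and the accompanying discrepancy/conductor bookkeeping along $L$: one must justify that $K_S\cdot L$, the adjunction $K_S=(K_X+(n-2)H)|_S$, and the discrepancy identity above are all valid when $S$ fails to be normal, which is where the $\qq$-Gorenstein hypothesis is indispensable, and then verify that the genuinely negative contribution $\sum_i a_i<0$ is dominated by $\mu(\delta-1)$ through the sharp degree–genus inequality.
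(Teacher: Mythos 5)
Your proposal follows the paper's own strategy step for step: cut down to a general surface section $S\supseteq L$, invoke Theorem~\ref{Nomathm1} to present $S$ as the birational image of a divisor $\widetilde{S}$ of type $(\mu,\mathcal{L})$ on a conical scroll over a curve, compute $K_S\cdot L$ through the curves contracted to the singular points of $S$ on $L$, and conclude with Castelnuovo's genus bound. Your closed formula $D_{inn}(X)\cdot L=\mu\,\frac{\delta(\delta-1)-2g}{\delta+1}+1-e$ is algebraically identical to the paper's, and the positivity you sketch can indeed be extracted from $\mu\geq 2$, $\delta\geq e$ and Castelnuovo (the paper instead notes $g\geq 1$, hence $\delta\geq e+1$, and runs a two-case analysis). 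The genuine gap is precisely at the step you postpone to the last sentence, and it is not routine bookkeeping: the surface $\widetilde{S}$ is in general \emph{not normal} (the hyperplanes are forced to pass through $L$, so Bertini/Seidenberg-type normality fails), and on a non-normal surface the numbers $E_i^{2}$ and $E_i\cdot\widetilde{L}$ are simply undefined unless the individual curves $E_i$ and $\widetilde{L}$ are $\qq$-Cartier --- Mumford's intersection theory for Weil divisors requires normality. Consequently your determination of the discrepancies, ``$E_i^{2}=-(\delta+1)$, so $a_i=-\frac{2g+\delta-1}{\delta+1}$'', is circular: it computes $a_i$ from a self-intersection whose existence presupposes exactly the $\qq$-Cartier property that is in doubt. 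Relatedly, the relative canonical identity is a priori only an equality $K_{\widetilde{S}}-f^{*}K_S=-\sum_i a_ib_iC_i$ in the Chow group $A^{1}(\widetilde{S})$ with unknown coefficients $a_i$ and with the multiplicities $b_i$ of the zeros of $g$ (you tacitly assume all zeros are simple, giving exactly $\mu$ exceptional curves); nothing yet permits intersecting it term by term.

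The paper closes this gap --- this is its stated main novelty --- with Claim B: every $a_i\neq 0$ (in your notation, every discrepancy along $E_i$ is nonzero). The argument is a short contradiction: if $a_1=0$, then $f^{*}K_S-K_{\widetilde{S}}=\sum_{i\geq 2}a_ib_iC_i$ is a $\qq$-Cartier class whose support is disjoint from $C_1$, so its intersection with $C_1$ vanishes; but computing the left side via $K_{\widetilde{S}}\sim(\mu-3)H+\pi^{*}(K_C+\mathcal{L}+\mathcal{O}_C(1))$, $H\cdot C_1=0$ and $f^{*}K_S\cdot C_1=0$ yields $-(2g-1+d_C)<0$, a contradiction. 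Once all $a_i\neq 0$, the $\qq$-Cartier divisor $\sum_i a_ib_iC_i$ has pairwise disjoint components with nonzero coefficients, so each $C_i$ is $\qq$-Cartier, and then so is $\overline{L}$ via $H-\pi^{*}\mathcal{O}_C(1)\sim\overline{L}+\sum_i b_iC_i$. Only after this bootstrap do the computations of $C_i^{2}$, of the $a_i$, and of $K_S\cdot L$ --- all of which you wrote down --- become meaningful, and the rest of your calculation then goes through as in the paper. In short: your outline lands on the correct numbers, but it is missing the one idea (Claim B and the resulting $\qq$-Cartier property of the individual exceptional curves) that makes the intersection theory legitimate in the non-normal setting; flagging the difficulty is not the same as resolving it.
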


\begin{proof}
We can take a general surface section $S \subseteq \P^{2+e}$ of $X \subseteq \P^N$ such that $L \subseteq S$. Then ${\rm Sm}(X) \cap S = {\rm Sm}(S)$, and the partial Gauss map of $S$ is constant on $L$.

\textbf{Claim A:}
The line $L$ is contained in $\overline{\mathcal{C}}(S)$.

Take any $u \in {\rm Sm}(S) \cap L = {\rm Sm}(X) \cap L$ and a general point $x \in S$ so that $x$ is also a general point of $X$. Then $l(X \cap \langle u, x \rangle ) \geq 3$. However, since $x, u \in \P^{2+e}$, it follows that $\langle u, x \rangle \subseteq \P^{2+e}$. Thus $l(X \cap \langle u, x \rangle )=l(S \cap \langle u, x \rangle )$, so $u \in \overline{\mathcal{C}}(S)$. This proves the claim.

Now it suffices to show that
$$
D_{inn}(S)\cdot L>0,
$$
where $D_{inn}(S):=D_{inn}(X)|_S\sim (d-e-3)H-K_S$ by adjunction.

By Theorem \ref{Nomathm1}, there is a conical scroll $\pi_F \colon F=\P_C(\mathcal{O}_C^{\oplus 2} \oplus \mathcal{O}_C(1)) \to C$ with vertex $\Lambda$ over a smooth projective curve $C$ with a nondegenerate birational embedding $\nu \colon C \to \P^e$, such that $F$ has a birational embedding $\varphi \colon F \to \P^{2+e}$ induced by the tautological bundle $\mathcal{O}_F(1)$ and $S$ is the birational image of a prime divisor $\widetilde{S} \in |\mathcal{O}_F(\mu) \otimes \pi_F^* \mathcal{L}|$. Put $\widetilde{L}_C:=L \times C$ and $d_C:=\deg \nu(C)$ and let $g$ be the genus of $C$. The following hold:
\begin{itemize}
  \item[(i)] $\mu\ge 2$, $\mathcal{L}$ is an effective line bundle on $C$ with $\deg \mathcal{L} =1$, and $d=\mu d_C +1 $.
  \item[(ii)] $\widetilde{L}_C|_{\widetilde{S}} = \overline{L} + \sum_i^k b_iC_i$, where $b_i \geq 1$ are integers with $\sum_i^k b_i = \mu$ and $C_i \cong C$ are $f$-exceptional curves.
\end{itemize}
Note that $\varphi$ contracts $\widetilde{L}_C$ to $L$ and that $L \times C \sim \mathcal{O}_F(1)-\pi_F^*\mathcal{O}_C(1)$ on $F$, where $\mathcal{O}_C(1)=\nu^*\mathcal{O}_{\P^e}(1)$.

Let $f=\varphi|_{\widetilde{S}} \colon \widetilde{S} \to S$ be the birational morphism\footnote{Here $\widetilde{S}$ and $S$ are not necessarily normal. The following argument, however, does not rely on normality, but only on the $\Q$-Gorenstein condition.}, and $\pi=\pi_F|_{\widetilde{S}} \colon \widetilde{S} \to C$ be the fibration (whose fibers are degree $\mu$ plane curves).
We put $H=f^*\mathcal{O}_S(1) = \mathcal{O}_F(1)|_{\widetilde{S}}$, and $\overline{L}=f^{-1}_*L$. We have that
\begin{equation}\label{divisor identity 1}
  H-\pi^*\mathcal{O}_C(1) \sim \widetilde{L}_C|_{\widetilde{S}} = \overline{L} + \sum_i^k b_iC_i.
\end{equation}
In view of Theorem \ref{Nomathm1}, $f(C_i)$ are precisely the singular points of $S$ that lie in $L$ and $f$ is isomorphic over $U:=S \setminus ( S_{\text{sing}}\cap L)$. Then $K_{\widetilde{S}}|_{f^{-1}(U)} = f^*(K_S|_{U})$. This implies that the $\Q$-Cartier divisor $K_{\widetilde{S}} - f^*K_S$ supports at $\widetilde{S} \setminus f^{-1}(U)=C_1 \cup \cdots \cup C_k$ as set. Thus, we have the identity
\begin{equation}\label{divisor identity 2}
  K_{\widetilde{S}} - f^*K_S = - \sum_{i=1}^{k} a_i b_i C_i.
\end{equation}
in the Chow group $A^1(\widetilde{S})$ for some $a_i$, cf. \cite[Proposition 1.8]{Fulton98}.

\textbf{Claim B:}
$a_i\neq 0$ for all $i$.

Suppose $a_1=0$. Then by $(\ref{divisor identity 2})$, the divisor $\sum_{i\ge 2}a_ib_iC_i$ is $\Q$-Cartier. Using the fact that $K_{\widetilde{S}}\sim (\mu-3)H + \pi^*(K_C+ \mathcal{L} + \mathcal{O}_C(1))$ and intersecting $(\ref{divisor identity 2})$ with $C_1$, we get
\begin{equation*}
  0<2g-2+1+d_C=-(\sum_{i\ge 2}a_ib_iC_i)\cdot C_1=0,
\end{equation*}
as set theoretically $C_i\cap C_j=\empty$ for $i\neq j$. This is a contradiction. Thus $a_1\neq 0$. Similarly for other $a_i$. This proves the claim.

Since $\sum a_ib_iC_i$ is $\Q$-Cartier where $a_i\neq 0$ for $i$ and $C_i$ are disjoint from one another, we deduce that for each $i$, $C_i$ is a $\Q$-Cartier divisor on $\widetilde{S}$ and so is $\overline{L}$. It is clear that $C_i\cdot C_j=0$ for $i \neq j$ and $C_i\cdotp\overline{L}=1$ in the sense of intersection.

From (\ref{divisor identity 1}), we have
$$
-d_C=(H-\pi^*\mathcal{O}_C(1) )\cdot C_i = (\overline{L} + \sum_j^k b_j C_j)\cdot C_i = 1 + b_i C_i^2,
$$
which yields that $C_i^2=\frac{-d_C-1}{b_i}$.

Since $K_{\widetilde{S}}\sim (\mu-3)H + \pi^*(K_C+ \mathcal{L} + \mathcal{O}_C(1))$, from (\ref{divisor identity 2}), we have
$$
2g-2+1+d_C = K_{\widetilde{S}}\cdot C_i = -a_ib_i C_i^2=a_i(d_C+1)
$$
so that $a_i = \frac{2g-2}{d_C+1} + 1$. On the other hand,
$$
\mu-3=K_{\widetilde{S}}\cdot \overline{L}=f^*K_S\cdot \overline{L} - \sum_{i=1}^{k} a_ib_i C_i\cdot \overline{L} =
f^*K_S\cdot \overline{L} - \left( \frac{2g-2}{d_C+1} + 1 \right) \left( \sum_{i=1}^k b_i \right)
= f^*K_S\cdot \overline{L} - \left(  \frac{2g-2}{d_C+1} + 1 \right) \mu .
$$
Hence, we obtain that
$$
-K_S\cdot L = f^*(-K_S)\cdot \overline{L} = -\frac{(2g-2)\mu}{d_C+1} - 2\mu + 3.
$$

Recall that $D_{inn}=-K_S + (d-e-3)H$.
It then suffices to check that
$$
d-e-3 -\frac{(2g-2)\mu}{d_C+1} - 2\mu + 3 > 0.
$$
Note that $g \geq 1$ because if $g=0$, then the partial Gauss map of $S$ is nonconstant on $L$ by Theorem \ref{Nomathm2}. Thus $d_C \geq e+1$.
We have
$$
\mu d_C - e - 2\mu +1 \geq \mu d_C - d_C - 2\mu+2 =  (\mu-1)(d_C-2),
$$
and hence
$$
d-e-3 -\frac{(2g-2)\mu}{d_C+1} - 2\mu + 3  = \mu d_C - e - 2\mu +1 - \frac{(2g-2)\mu}{d_C+1} \geq (\mu-1)(d_C-2) - \frac{(2g-2)\mu}{d_C+1}.
$$
It is sufficient to check that
$$
\frac{(\mu-1)(d_C-2)(d_C+1)}{\mu} \geq 2g > 2g-2.
$$
Recall Castelnuovo's genus bound for $\nu(C) \subseteq \P^e$:
$$
\frac{(d_C-1-\epsilon)(d_C+\epsilon-e)}{e-1} \geq 2g,
$$
where $m:=\lfloor \frac{d_C-1}{e-1} \rfloor$ and $\epsilon = (d_C-1)-m(e-1)$.

If $e \geq 3$, then $\frac{\mu-1}{\mu} \geq \frac{1}{e-1}$ and $(d_C-2)(d_C+1) > (d_C-1-\epsilon)(d_C+\epsilon-e)$. Thus, we are done.

When $e=2$, we have
$$
d_C^2 - 3d_C + 2=(d_C-1)(d_C-2) \geq 2g,
$$
so that $d_C^2-d_C-2g \geq 2d_C -2$. In this case, we only need to show that
$$
\mu d_C -2\mu-1 - \frac{(2g-2)\mu}{d_C+1}  > 0.
$$
We have
$$
\frac{(d_C-2)(d_C+1) - (2g-2)}{d_C+1}\mu - 1 = \frac{d_C^2 - d_C - 2g}{d_C+1}\mu - 1 \geq \frac{(2d_C-2)\mu}{d_C+1} - 1 .
$$
Since $d_C > 1$ and $\mu \geq 2$, it follows that $(2d_C-2)\mu > d_C+1$. Thus
$$
\mu d_C -2\mu-1 - \frac{(2g-2)\mu}{d_C+1} = \frac{(d_C-2)(d_C+1) - (2g-2)}{d_C+1}\mu - 1 > 0,
$$
and this completes the proof.
\end{proof}

\begin{proposition}\label{semiampleness}
Suppose that $X$ has isolated singularities, does not satisfy Condition $(E_{m})$ for some $1\le m\le e-1$ and that
the partial Gauss map is constant on each irreducible component of $\overline{\mathcal{C}}(X)$. Then
$(d-m-n-2)H-K_X$ is semiample.
\end{proposition}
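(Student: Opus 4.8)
The plan is to apply the Zariski--Fujita theorem, so that the whole argument reduces to controlling the stable base locus $\mathbb{B}(D_{\text{inn}})$ of $D_{\text{inn}} = (d-m-n-2)H-K_X$ and checking positivity there. First I would bound the base locus. By Theorem~\ref{bpf}, the failure of $(E_m)$ gives $\mathbb{B}(D_{\text{inn}}) \subseteq \text{Bs}\,|D_{\text{inn}}| \subseteq \overline{\mathcal{C}}(X) \cup X_{\text{Sing}}$. Because the singularities are isolated, $X_{\text{Sing}}$ is finite, and because the partial Gauss map is constant on each component $\Lambda$ of $\overline{\mathcal{C}}(X)$, Theorem~\ref{Nomathm1}(4) yields $\dim \Lambda \le \dim X_{\text{Sing}} + 1 \le 1$; here the hypothesis $\Lambda \cap {\rm Sm}(X) \ne \emptyset$ of that theorem holds since $\mathcal{C}(X) \subseteq {\rm Sm}(X)$ is dense in $\overline{\mathcal{C}}(X)$. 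Consequently every component of $\overline{\mathcal{C}}(X)$ is a point or a line (using that $\overline{\mathcal{C}}(X)$ is a union of linear subspaces), and $\mathbb{B}(D_{\text{inn}})$ is a finite union of lines and points.

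Next I would record the positivity on the one-dimensional part. If $\dim \overline{\mathcal{C}}(X) = 0$ there are no lines and this is vacuous; if $\dim \overline{\mathcal{C}}(X) = 1$, Proposition~\ref{positive intersection} gives $D_{\text{inn}}\cdot L > 0$ for each line $L \subseteq \overline{\mathcal{C}}(X)$ (the proposition computes this for the class with multiplier $m=e-1$, and for smaller $m$ the intersection number only increases, since $H\cdot L = 1$ and the $H$-coefficient of $D_{\text{inn}}$ grows as $m$ decreases).

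With this in hand, nefness of $D_{\text{inn}}$ is essentially automatic: for an irreducible curve $C$ not contained in $\mathbb{B}(D_{\text{inn}})$, some multiple $kD_{\text{inn}}$ has an effective member avoiding $C$, whence $D_{\text{inn}}\cdot C \ge 0$; while a curve $C$ contained in $\mathbb{B}(D_{\text{inn}})$ must be one of the lines $L$ (being a one-dimensional irreducible subset of a union of lines and points), on which $D_{\text{inn}}\cdot C>0$ by the previous step. Thus $D_{\text{inn}}$ is nef. Finally, its restriction to $\mathbb{B}(D_{\text{inn}})$ is semiample: trivially on the points, and on each line $L \cong \prj{1}$ it is $\sshf{\prj{1}}(D_{\text{inn}}\cdot L)$ with positive degree, hence ample. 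The Zariski--Fujita theorem then upgrades this to semiampleness of $D_{\text{inn}}$ on all of $X$.

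The genuinely hard input sits upstream in Proposition~\ref{positive intersection}, which I am entitled to assume: the positivity $D_{\text{inn}}\cdot L>0$ is proved by restricting to a general (possibly non-normal) surface section, tracking discrepancy coefficients along the exceptional curves via the $\qq$-Gorenstein condition, and invoking Castelnuovo's genus bound. Once that positivity is granted, the remaining obstacle is only conceptual bookkeeping: ensuring that the isolatedness of the singularities and the linearity of $\overline{\mathcal{C}}(X)$ together force the stable base locus to be a union of rational curves and points, which is exactly the setting in which ``nef on a $\prj{1}$'' coincides with ``semiample'' and the Zariski--Fujita reduction closes the argument.
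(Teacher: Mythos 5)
Your proof is correct and takes essentially the same route as the paper's: Theorem~\ref{bpf} together with the isolated-singularity hypothesis confines the stable base locus to points and to lines of $\overline{\mathcal{C}}(X)$, Proposition~\ref{positive intersection} supplies $D_{\text{inn}}\cdot L>0$ on those lines, and the Zariski--Fujita theorem concludes. The additional details you supply --- the dimension bound on components of $\overline{\mathcal{C}}(X)$ via Theorem~\ref{Nomathm1}(4), the observation that Proposition~\ref{positive intersection} is really the case $m=e-1$ and that smaller $m$ only increases $D_{\text{inn}}\cdot L$, and the explicit nefness check (which Zariski--Fujita does not in fact require) --- merely make explicit what the paper leaves implicit.
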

\begin{proof}
If the stable base locus $\text{\bf B}(D_{inn})$ is 0-dimensional, then $D_{inn}$ is semiample by Zariski-Fujita theorem. Otherwise by Theorem \ref{bpf} and the isolated singularities assumption, any irreducible component of $\text{\bf B}(D_{inn})$ of positive dimension is a line that is contained in $\overline{\mathcal{C}}(X)$. Let $L$ be any such line. By Proposition \ref{positive intersection}, $D_{inn}\cdot L>0$. Therefore $D_{inn}|_{\text{\bf B}(D_{inn})}$ is ample. So the semi-ampleness of $D_{inn}$ follows from Zariski-Fujita theorem again.
\end{proof}

\begin{proposition}\label{p5}
Let $X\subseteq \prj{N}$ be a non-degenerate projective variety of dimension $n \geq 2$, codimension $e\geq 2$, and degree $d$.
Assume $X$ is normal and has isolated $\mathbb{Q}$-Gorenstein singularities. Suppose that $X$ does not satisfy Condition $(E_{e-1})$ and the partial Gauss map is constant on each irreducible component of $\overline{\mathcal{C}}(X)$. Then $H^1(X, \mathcal{O}_X(k))=0$ for every $k \geq d-e-n$.
\end{proposition}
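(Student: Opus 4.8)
The plan is to deduce the vanishing from Nadel vanishing (Theorem~\ref{Nadel Vanishing}) applied to the double-point divisor, exploiting that the isolatedness of the singularities forces the relevant multiplier ideal to have zero-dimensional cosupport. Since $X$ does not satisfy $(E_{e-1})$ it fails $(E_m)$ for every $1\le m\le e-1$; I take $m=e-1$, so that $D_{\text{inn}}=(d-e-n-1)H-K_X$. By Proposition~\ref{semiampleness}, whose hypotheses (isolated singularities, failure of $(E_m)$, and constancy of the partial Gauss map on each component of $\overline{\mathcal{C}}(X)$) are exactly those assumed here, $D_{\text{inn}}$ is semiample, hence nef. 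The useful rearrangement is $K_X \sim (d-e-n-1)H - D_{\text{inn}}$.

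Next I would pass to a Cartier, base-point-free multiple. As $X$ is $\mathbb{Q}$-Gorenstein some $rK_X$ is Cartier, so $rD_{\text{inn}}$ is Cartier, and by semiampleness there is an integer $p$ (a multiple of $r$) with $M:=pD_{\text{inn}}$ Cartier and $|M|$ base point free. For such $|M|$ the fixed part of the pullback to any log resolution vanishes, so directly from the definition $\mathcal{J}(X,c\cdot|M|)=\mathcal{J}(X,\mathcal{O}_X)$ for every $c\in\mathbb{Q}_+$ (this is the equality case of Lemma~\ref{inclusion}, since $\overline{\mathfrak{b}(|M|)}=\mathcal{O}_X$). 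Taking $L=kH$ and $c=1/p$, I compute
\[
L-(K_X+cM)=kH-K_X-D_{\text{inn}}=(k-d+e+n+1)H,
\]
which is ample, hence big and nef, precisely when $k\ge d-e-n$. Theorem~\ref{Nadel Vanishing} then yields $H^i\big(X,\mathcal{O}_X(k)\otimes\mathcal{J}(X,\mathcal{O}_X)\big)=0$ for all $i>0$ and all $k\ge d-e-n$.

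Finally I would feed this into the ideal-sheaf sequence
\[
0\to \mathcal{O}_X(k)\otimes\mathcal{J}(X,\mathcal{O}_X)\to \mathcal{O}_X(k)\to \mathcal{O}_X(k)\otimes\big(\mathcal{O}_X/\mathcal{J}(X,\mathcal{O}_X)\big)\to 0.
\]
Because multiplier ideals are formed locally and $\mathcal{J}(X,\mathcal{O}_X)|_{{\rm Sm}(X)}=\mathcal{O}_{{\rm Sm}(X)}$, the cosupport of $\mathcal{J}(X,\mathcal{O}_X)$ is contained in $X_{\text{Sing}}$, which is a finite set by the isolated-singularity hypothesis. Hence the quotient $\mathcal{O}_X/\mathcal{J}(X,\mathcal{O}_X)$ is supported in dimension $0$, and its twist by $\mathcal{O}_X(k)$ has vanishing $H^1$. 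The long exact sequence then sandwiches $H^1(X,\mathcal{O}_X(k))$ between $H^1$ of the subsheaf (zero by Nadel) and $H^1$ of the quotient (zero by dimension), giving $H^1(X,\mathcal{O}_X(k))=0$ for $k\ge d-e-n$, as required; the same argument in fact kills $H^i$ for all $i>0$.

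The substantive difficulty has already been discharged upstream, in the semiampleness of $D_{\text{inn}}$ (Proposition~\ref{semiampleness}, resting on the intersection estimate of Proposition~\ref{positive intersection}); within this proof the only delicate point is that the multiplier ideal must see nothing beyond the singularities of $X$ itself. This is exactly what the base-point-free multiple buys us: the boundary $cM$ introduces no non-klt locus away from $X_{\text{Sing}}$, so the cosupport cannot spread onto the (possibly one-dimensional) components of $\overline{\mathcal{C}}(X)$. Had I instead worked with $|D_{\text{inn}}|$ directly, whose base locus can be one-dimensional by Theorem~\ref{bpf}, the quotient sheaf might have had positive-dimensional support and the final vanishing would break down; passing to a base-point-free multiple via semiampleness is precisely the device that circumvents this.
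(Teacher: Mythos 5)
Your proof is correct and follows essentially the same route as the paper: semiampleness of the double-point divisor (Proposition \ref{semiampleness}) trivializes the base-locus contribution to the multiplier ideal, Nadel vanishing (Theorem \ref{Nadel Vanishing}) kills the cohomology of the twisted ideal, and the isolated-singularity hypothesis confines the cosupport to a finite set, so the quotient contributes nothing to $H^1$. The only cosmetic difference is that you fix a single base-point-free Cartier multiple $pD_{\text{inn}}$ and let the ample margin $(k-d+e+n+1)H$ grow with $k$, whereas the paper twists the divisor itself, taking $M_i=(d-e-n-1+i)H-K_X$ so that the margin is always exactly $H$ and the relevant containment $\mathcal{J}_i\supseteq\mathcal{J}(X,\mathcal{O}_X)$ comes from Lemma \ref{inclusion}.
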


\begin{proof}
For each $i \geq 0$, consider the $\mathbb{Q}$-Cartier divisor
\[
M_i =(d-e-n-1+i)H-K_X
\]
and its corresponding multiplier ideal $\mathcal{J}_i = \mathcal{J}(X, \frac{1}{c} \cdot |c M_i|)$ for any positive integer $c$ such that $cK_X$ is Cartier.
One has the inclusions of ideals
\[
\mathcal{J}_i = \mathcal{J}\left(X, \frac{1}{c}\cdot |cM_i|\right) \supseteq \mathcal{J}(X, |c M_i|)\supseteq \mathcal{J}(X, \sshf{X})\frak{b}(|c M_i|),
\]
where the last inclusion is by Lemma \ref{inclusion}.
Thanks to Proposition \ref{semiampleness}, $M_i$ is semiample, so for divisible $c\gg 0$, $\frak{b}(|c M_i|)=\sshf{X}$. Consequently one has the inclusions of schemes for such $c$
\begin{equation*}
V(\mathcal{J}_i)\subseteq V(\mathcal{J}(X, \sshf{X}))\subseteq X_{\text{Sing}}.
\end{equation*}
By Nadel vanishing (Theorem \ref{Nadel Vanishing}), one has that
\[
H^1( X, \mathcal{O}_X((d-e-n+i)H)\otimes \mathcal{J}_i)=0.
\]
Hence
\[
H^1(X, \mathcal{O}_X((d-e-n+i)H))=0,
\]
as desired.
\end{proof}

Theorem~\ref{vanishingtheorem} now follows from Corollary~\ref{nomasclassification}, and Propositions~\ref{p1},~\ref{p2},~\ref{p4}, and~\ref{p5}.

\section{Classification of the extremal cases}\label{extremalcases}

This section is devoted to proving Theorem \ref{extremalcasetheorem}, which characterizes projective varieties in Theorem \ref{maintheorem} with the maximal and the next to maximal $\mathcal{O}_X$-regularity.

\begin{proof}[Proof of Theorem \ref{extremalcasetheorem}]
Let $X \subseteq \P^N$ be a non-degenerate projective variety of dimension $n$, codimension $e$, and degree $d$ with normal isolated $\qq$-Gorenstein singularities, and $H$ be its hyperplane section.
Since the theorem is known for smooth varieties by \cite[Theorem B]{KP14}, we may assume that $X$ is a \emph{singular variety}.
We can also assume that $n, e \geq 2$. Let $S \subseteq \P^{2+e}$ and $C \subseteq \P^{1+e}$ be a general surface section and a general curve section, respectively, and $g$ be the genus of $C$.

\medskip

\noindent $(1)$ We first prove the `if' part.
If $d=e+1$, by Theorem \ref{maintheorem}, $\reg(\mathcal{O}_X) \leq d-e=1$. Since it is well known that $\reg(\mathcal{O}_Z) \geq 1$ for any variety $Z$, we obtain $\reg(\mathcal{O}_X)=1=d-e$.
If $d=e+2$, by Theorem \ref{maintheorem}, $\reg(\mathcal{O}_X) \leq d-e=2$. Suppose that $\reg(\mathcal{O}_X)=1$. Since $X \subseteq \P^N$ is linearly normal, it follows that $\reg(X)=2$, which implies that $\reg(C)=2$. Then $C \subseteq \P^{1+e}$ is a rational normal curve so that $d=e+1$, which is a contradiction. Thus $\reg(\mathcal{O}_X)=2=d-e$.

Now, we prove the `only if' part. Suppose that $\reg(\mathcal{O}_X)=d-e$. Then $X \subseteq \P^N$ must be linearly normal since if not, then it is obtained by an isomorphic projection of $X \subseteq \P^{N+1}$ so that $\reg(\mathcal{O}_X) \leq d-(e+1)=d-e-1$.
By Theorem \ref{vanishingtheorem}, we know that
\[
H^i(X, \mathcal{O}_X(d-e-1-i))=0
\]
for $1 \leq i \leq n-1$. It then follows that
\[
H^n(X, \mathcal{O}_X(d-e-1-n)) \neq 0.
\]
By considering general hyperplane sections successively, we see that $H^1(C, \mathcal{O}_C(d-e-2)) \neq 0$ (see \cite[Proof of Theorem B]{KP14}), and hence, by \cite[Theorem B]{KP14}, we have that $d \leq e+2$. This completes the proof for $(1)$.

\medskip

\noindent $(2)$ As before, we first prove the `if' part. It is enough to consider the case that $d=e+3$ and $X \subseteq \P^N$ is linearly normal. By $(1)$, we know that $\reg(\mathcal{O}_X) \leq d-e-1=2$.
If $\reg(\mathcal{O}_X)=1$, then $\reg(X)=2$ so that $\reg(C)=2$. As in $(1)$, we then obtain $d=e+1$, which is a contradiction. Thus we have that $\reg(\mathcal{O}_X)=2=d-e-1$.

Now, we prove the `only if' part. Suppose that $\reg(\mathcal{O}_X)=d-e-1$.
By $(1)$, we may assume that $X \subseteq \P^N$ is linearly normal and $d \geq e+3$. By Theorem \ref{vanishingtheorem}, we know that
\[
H^i(X, \mathcal{O}_X(d-e-2-i)) = 0
\]
for $1 \leq i \leq n-2$ under the assumption that $n \geq 3$. It then follows that
\[
H^n(X, \mathcal{O}_X(d-e-2-n)) \neq 0~~ \text{ or }~~H^{n-1}(X, \mathcal{O}_X(d-e-1-n)) \neq 0.
\]
By considering general hyperplane section successively (see \cite[Proof of Theorem B]{KP14}), we see that
\[
H^2(S, \mathcal{O}_S(d-e-4)) \neq 0~~\text{ or }~~ H^{1}(S, \mathcal{O}_S(d-e-3)) \neq 0.
\]
In particular, we have that $\reg(\mathcal{O}_S)=d-e-1$.
If $n \geq 3$, then $S$ is smooth. In this case, the assertion follows from \cite[Theorem B]{KP14}. Thus we suppose that $n=2$ and $X=S$ is a normal singular surface. Keep in mind that $S \subseteq \P^{2+e}$ is linearly normal.
If $H^2(S, \mathcal{O}_S(d-e-4)) \neq 0$, then $H^1(C, \mathcal{O}_S(d-e-3)) \neq 0$. In this case, by \cite[Theroem B]{KP14}, either $d=e+3$ or $C \subseteq \P^{1+e}$ is a complete intersection of type $(2,3)$. Thus the assertion holds. It only remains to consider the case that
\[
H^2(S, \mathcal{O}_S(d-e-4)) = 0 ~~\text{ and }~~ H^1(S, \mathcal{O}_S(d-e-3)) \neq 0.
\]
We may assume that $S$ does not satisfy Condition ($E_{e-1}$) and $S$ is not a birational type divisor of a conical rational scroll because if not, then  $H^1(S, \mathcal{O}_S(k)) = 0$ for any $k \in \zz$ by Propositions ~\ref{p1},~\ref{p2},~\ref{p4}. In particular, $-K_S + (d-e-3)H$ is semiample by Proposition \ref{semiampleness}.

Note that it is enough to show that $d \leq e+3$.
To derive a contradiction, suppose that $d \geq e+4$. Suppose furthermore that $g \leq 1$. Since $H^1(S, \mathcal{O}_S(-1))=0$ by Kodaira vanishing, it follows that
\[
h^1(S, \mathcal{O}_S) \leq h^1(C, \mathcal{O}_C) \leq 1.
\]
We then have that
$$
d-g+1=h^0(C, \mathcal{O}_C(1)) \leq h^0(S, \mathcal{O}_S(1))-1 + h^1(S, \mathcal{O}_S) \leq e+3
$$
so that
\[
d \leq e+2 + g \leq e+3,
\]
which is a contradiction. Thus we should have that $g \geq 2$.

Recall that $H^1(S, \mathcal{O}_S(d-e-2))=0$ and $H^1(S, \mathcal{O}_S(d-e-3)) \neq 0$.
Thus the natural restriction map
\[
H^0(S, \mathcal{O}_S(d-e-2)) \to H^0(C, \mathcal{O}_C(d-e-2))
\]
is not surjective.
In particular, $C \subseteq \P^{1+e}$ is not $(d-e-2)$-normal. First, consider the case that $e \geq 3$. By applying \cite[Theorem 1]{Noma02} for $l=2$, we see that $C \subseteq \P^{1+e}$ is $(d-e-2)$-normal, which is a contradiction. Next, consider the only remaining case that $e=2$, i.e., we have a normal surface $S \subseteq \P^4$ of degree $d \geq 6$ such that
$$
H^1(S, \mathcal{O}_S(d-5)) \neq 0.
$$
Recall that $C \subseteq \P^{3}$ is not $(d-4)$-normal and $d \geq 6, g \geq 2$. By \cite[Th\'{e}or\`{e}me 0.1]{D86}, $C \subseteq \P^{3}$ admits a $(d-2)$-secant line $l$. By taking the projection of $C \subseteq \P^{3}$ centered at $l$, we see that $C$ is a hyperelliptic curve. In particular, we have $H^1(C, \mathcal{O}_C(k))=0$ for $k \geq 1$, and thus, $d \geq g+3$ by Riemann-Roch formula.

Let $\pi \colon S' \to S$ be the minimal resolution, and $H':=\pi^*H$. Then $(S', H')$ is a generically polarized smooth surface which is $a$-minimal in the sense of \cite{AS89}. Since we assume that $S$ is not a scroll over a curve and $H^1(S, \mathcal{O}_S(d-5)) \neq 0$, it follows from \cite[Theorem 2.5]{AS89} that $K_{S'} + H'$ is nef. Now let $F=(d-4)H'$ and $G=\pi^*K_S + H'$. Both are nef $\qq$-divisors on $S'$.
Note that $F^2 > 2 F.G$ is equivalent to
\[
(d-4)d > 4g-4.
\]
Recall that $d \geq g+3$ and $g \geq 2$. Since $(g-1)^2 > 0$, we have that $g^2+2g-3 > 4g-4$. Then we obtain that
\[
(d-4)d \geq (g-1)(g+3) > 4g-4
\]
so that $F^2 > 2F.G$.
By \cite[Theorem 2.2.15]{Laz04a} we have that $F-G=\pi^*(-K_S + (d-5)H)$ is big, and so is $-K_S + (d-5)H$.
On the other hand, recall that $-K_S + (d-5)H$ is semiample.
By Kawamata-Viehweg vanishing, we obtain
\[
H^1(S, \mathcal{O}_S(d-5))=0,
\]
which is a contradiction. Therefore, we finish the proof.
\end{proof}

\begin{bibdiv}
\begin{biblist}

\bib{AS89}{article}{
	author={Andreatta, Marco},
    author={Sommese, Andrew J.},
    title={Generically ample divisors on normal Gorenstein surfaces. Singularities},
    note={Iowa City, IA, 1986},
    journal={Contemp. Math.},
    publisher={Amer. Math. Soc., Providence, RI},
    volume={90},
    date={1989},
    review={\MR{1000592}},
}

\bib{BM}{article}{
	author={Bayer, Dave},
    author={Mumford, David},
    title={What can be computed in algebraic geometry?},
    journal={Computational Algebraic Geometry and Commutative Algebra (Cortona 1991), Symposia Mathematica},
    volume={XXXIV},
	date={1993},
    pages={1-48},
    review={\MR{1253986}},
}

\bib{D86}{article}{
	author={D'Almeida, Jean},
    title={Courbes de l'espace projectif: S\'{e}ries lin\'{e}aires incompl\`{e}tes et multis\'{e}cantes},
    journal={J. Reine Angew. Math.},
    volume={370},
    date={1986)},
    pages={30--51},
    review={\MR{0852508}},
    doi={10.1515/crll.1986.370.30},
    issn={1435-5345},
    }

\bib{EG84}{article}{
   author={Eisenbud, David},
   author={Goto, Shiro},
   title={Linear free resolutions and minimal multiplicity},
   journal={J. Algebra},
   volume={88},
   date={1984},
   number={1},
   pages={89--133},
   issn={0021-8693},
   review={\MR{741934}},
   doi={10.1016/0021-8693(84)90092-9},
}

\bib{EH87}{article}{
  title={On varieties of minimal degree},
  author={Eisenbud, David},
  author={Harris, Joe},
  booktitle={Proc. Sympos. Pure Math},
  volume={46},
  number={1},
  pages={3--13},
  year={1987}
}

\bib{Fuj11}{article}{
   author={Fujino, Osamu},
   title={Fundamental theorems for the log minimal model program},
   journal={Publ. Res. Inst. Math. Sci.},
   volume={47},
   date={2011},
   number={3},
   pages={727--789},
   issn={0034-5318},
   review={\MR{2832805}},
   doi={10.2977/PRIMS/50},
}

\bib{Fulton98}{book}{
  title={Intersection Theory},
  author={Fulton, W.},
  volume={2},
  year={1998},
  publisher={Ergebnisse der Mathematik und ihrer Grenzgebiete (3), Springer-Verlag, Berlin}
}

\bib{Furukawa11}{article}{
  author={Furukawa, Katsuhisa},
  title={Defining ideal of the Segre locus in arbitrary characteristic},
  journal={Journal of Algebra},
  volume={336},
  number={1},
  pages={84--98},
  year={2011},
  publisher={Elsevier}
}

\bib{GLP83}{article}{
   author={Gruson, Laurent},
   author={Lazarsfeld, Robert},
   author={Peskine, Christian},
   title={On a theorem of Castelnuovo, and the equations defining space
   curves},
   journal={Invent. Math.},
   volume={72},
   date={1983},
   number={3},
   pages={491--506},
   issn={0020-9910},
   review={\MR{704401}},
   doi={10.1007/BF01398398},
}

\bib{HM07}{article}{
   author={Hacon, Christopher D.},
   author={Mckernan, James},
   title={On Shokurov's rational connectedness conjecture},
   journal={Duke Math. J.},
   volume={138},
   date={2007},
   number={1},
   pages={119--136},
   issn={0012-7094},
   review={\MR{2309156}},
   doi={10.1215/S0012-7094-07-13813-4},
}

\bib{Hartshorne77}{book}{
author={Hartshorne, Robin},
title={Algebraic geometry},
publisher={Springer-Verlag, New York},
year={1977},
}

\bib{Kollar96}{book}{
  title={Rational curves on algebraic varieties},
  author={Koll{\'a}r, J{\'a}nos},
  series={Ergebnisse der Mathematik und ihrer Grenzgebiete (3)},
  volume={32},
  year={1996},
  publisher={Springer-Verlag, Berlin}
}

\bib{KM98}{book}{
   author={Koll\'ar, J\'anos},
   author={Mori, Shigefumi},
   title={Birational geometry of algebraic varieties},
   series={Cambridge Tracts in Mathematics},
   volume={134},
   note={With the collaboration of C. H. Clemens and A. Corti;
   Translated from the 1998 Japanese original},
   publisher={Cambridge University Press, Cambridge},
   date={1998},
   pages={viii+254},
   isbn={0-521-63277-3},
   review={\MR{1658959}},
   doi={10.1017/CBO9780511662560},
}

\bib{Kwak98}{article}{
   author={Kwak, Sijong},
   title={Castelnuovo regularity for smooth subvarieties of dimensions $3$
   and $4$},
   journal={J. Algebraic Geom.},
   volume={7},
   date={1998},
   number={1},
   pages={195--206},
   issn={1056-3911},
   review={\MR{1620706}},
}

\bib{KP14}{misc}{
  author = {Kwak, Sijong},
  author = {Park, Jinhyung}
  title={A bound for Castelnuovo-Mumford regularity by double point divisors},
  year = {2014},
  note = {arXiv:1406.7404},
}

\bib{La87}{article}{
   author={Lazarsfeld, Robert},
   title={A sharp Castelnuovo bound for smooth surfaces},
   journal={Duke Math. J.},
   volume={55},
   date={1987},
   number={2},
   pages={423--429},
   issn={0012-7094},
   review={\MR{894589}},
   doi={10.1215/S0012-7094-87-05523-2},
}

\bib{Laz04a}{book}{
   author={Lazarsfeld, Robert},
   title={Positivity in algebraic geometry. I},
   series={Ergebnisse der Mathematik und ihrer Grenzgebiete. 3. Folge. A
   Series of Modern Surveys in Mathematics [Results in Mathematics and
   Related Areas. 3rd Series. A Series of Modern Surveys in Mathematics]},
   volume={48},
   note={Classical setting: line bundles and linear series},
   publisher={Springer-Verlag, Berlin},
   date={2004},
   pages={xviii+387},
   isbn={3-540-22533-1},
   review={\MR{2095471}},
   doi={10.1007/978-3-642-18808-4},
}

\bib{Laz04b}{book}{
   author={Lazarsfeld, Robert},
   title={Positivity in algebraic geometry. II},
   series={Ergebnisse der Mathematik und ihrer Grenzgebiete. 3. Folge. A
   Series of Modern Surveys in Mathematics [Results in Mathematics and
   Related Areas. 3rd Series. A Series of Modern Surveys in Mathematics]},
   volume={49},
   note={Positivity for vector bundles, and multiplier ideals},
   publisher={Springer-Verlag, Berlin},
   date={2004},
   pages={xviii+385},
   isbn={3-540-22534-X},
   review={\MR{2095472}},
   doi={10.1007/978-3-642-18808-4},
}
	
\bib{MM}{article}{
	author={Mayr, Ernst W.},
    author={Meyer, Albert R.},
    title={The complexity of the word problems for commutative semigroups and polynomial ideals},
    journal={Adv. Math.},
    volume={46},
    date={1982},
    number={3},
    pages={305-329},
    review={\MR{0683204}}
}

\bib{MP18}{article}{
   author={McCullough, Jason},
   author={Peeva, Irena},
   title={Counterexamples to the Eisenbud-Goto regularity conjecture},
   journal={J. Amer. Math. Soc.},
   volume={31},
   date={2018},
   number={2},
   pages={473--496},
   issn={0894-0347},
   review={\MR{3758150}},
   doi={10.1090/jams/891},
}

\bib{Mumford67}{article}{
  title={Pathologies III},
  author={Mumford, David},
  journal={American Journal of Mathematics},
  volume={89},
  number={1},
  pages={94--104},
  year={1967},
  publisher={JSTOR}
}
	
\bib{Noma02}{article}{
    author={Noma, Atsushi},
    title={A bound on the Castelnuovo-Mumford regularity for curves},
    journal={Math. Ann.},
    volume={322},
    date={2002},
    number={1},
    pages={69--74},
    issn={1432-1807},
    review={\MR{1883389}},
    doi={10.1007/s002080100},
}

\bib{Noma14}{article}{
   author={Noma, Atsushi},
   title={Generic inner projections of projective varieties and an
   application to the positivity of double point divisors},
   journal={Trans. Amer. Math. Soc.},
   volume={366},
   date={2014},
   number={9},
   pages={4603--4623},
   issn={0002-9947},
   review={\MR{3217694}},
   doi={10.1090/S0002-9947-2014-06129-1},
}

\bib{Noma18}{article}{
   author={Noma, Atsushi},
   title={Projective varieties with nonbirational linear projections and
   applications},
   journal={Trans. Amer. Math. Soc.},
   volume={370},
   date={2018},
   number={4},
   pages={2299--2320},
   issn={0002-9947},
   review={\MR{3748569}},
   doi={10.1090/tran/7086},
}

\bib{Niu15}{article}{
   author={Niu, Wenbo},
   title={Castelnuovo-Mumford regularity bounds for singular surfaces},
   journal={Math. Z.},
   volume={280},
   date={2015},
   number={3-4},
   pages={609--620},
   issn={0025-5874},
   review={\MR{3369342}},
   doi={10.1007/s00209-015-1439-2},
}	

\bib{NP17}{article}{
   author={Niu, Wenbo},
   author={Park, Jinhyung},
   title={A Castelnuovo-Mumford regularity bound for scrolls},
   journal={J. Algebra},
   volume={488},
   date={2017},
   pages={388--402},
   issn={0021-8693},
   review={\MR{3680924}},
   doi={10.1016/j.jalgebra.2017.06.027},
}

\bib{Pin86}{article}{
   author={Pinkham, Henry C.},
   title={A Castelnuovo bound for smooth surfaces},
   journal={Invent. Math.},
   volume={83},
   date={1986},
   number={2},
   pages={321--332},
   issn={0020-9910},
   review={\MR{818356}},
   doi={10.1007/BF01388966},
}

\bib{Ra90}{article}{
   author={Ran, Ziv},
   title={Local differential geometry and generic projections of threefolds},
   journal={J. Differential Geom.},
   volume={32},
   date={1990},
   number={1},
   pages={131--137},
   issn={0022-040X},
   review={\MR{1064868}},
}

\bib{Segre36}{article}{
  author={Segre, Beniamino},
  title={On the locus of points from which an algebraic rariety is projected multiplly},
  journal={Proceedings of the Physico-Mathematical Society of Japan. 3rd Series},
  volume={18},
  pages={425--426},
  year={1936}
}

\bib{Song15}{article}{
   author={Song, Lei},
   title={On the singularities of effective loci of line bundles},
   journal={J. Pure Appl. Algebra},
   volume={219},
   date={2015},
   number={7},
   pages={2917--2932},
   issn={0022-4049},
   review={\MR{3313512}},
   doi={10.1016/j.jpaa.2014.09.032},
}

\end{biblist}
\end{bibdiv}

\end{document}